\theoremstyle{plain}
\newtheorem{theorem}{Theorem}[section]
\newtheorem{lemma}[theorem]{Lemma}
\newtheorem{corollary}[theorem]{Corollary}
\newtheorem{proposition}[theorem]{Proposition}
\newtheorem{Observation}[theorem]{Observation}
\theoremstyle{definition}
\newtheorem{conjecture}[theorem]{Conjecture}
\newtheorem{remark}[theorem]{Remark}
\newtheorem{?}[theorem]{Problem}
\def\blue{\textcolor{blue}}
\def\red{\textcolor{red}}
\newcommand{\N}{\mathbb{N}}
\def\S{\mathfrak{S}}
\def\D{\mathfrak{D}}
\def\T{\mathfrak{T}}
\def\C{\mathfrak{C}}
\def\A{\mathcal{A}}
\def\B{\mathcal{B}}
\def\last{\mathrm{last}}
\def\deg{\mathrm{deg}}
\def\capp{\mathrm{cap}}
\def\zero{\mathrm{zero}}
\def\asc{\mathrm{asc}}
\def\dist{\operatorname{dist}}
\def\max{\operatorname{max}}
\def\I{{\bf I}}
\def\rmin{\operatorname{rmin}}
\def\satu{\operatorname{satu}}
\def\rep{\operatorname{rep}}
\author{Chunyan Yan\affiliationmark{1}
  \and Zhicong Lin\affiliationmark{2}\thanks{Corresponding author. E-mail address: linz@sdu.edu.cn.}
  \thanks{Supported in part by the National Science Foundation of China grant 11871247 and the project of Qilu Young Scholars of Shandong University.}
  }
\title[Inversion sequences avoiding pairs of patterns]{Inversion sequences avoiding pairs of patterns}
\affiliation{
 School of Science, Jimei University, Xiamen 361021, P.R. China\\
  Research Center for Mathematics and Interdisciplinary Sciences, Shandong University, Qingdao 266237, P.R. China}
\keywords{Inversion sequences, Pattern avoidance, Restricted permutations, Weighted ordered trees, Set partitions}
\begin{document}
\publicationdetails{22}{2020}{1}{23}{5964}
\maketitle
\begin{abstract}
 The enumeration of inversion sequences avoiding a single pattern was initiated by Corteel, Martinez, Savage and Weselcouch, and Mansour and Shattuck, independently. Their work has sparked various  investigations of generalized patterns in inversion sequences, including  patterns of relation triples  by Martinez and Savage, consecutive patterns by Auli and Elizalde, and vincular patterns by Lin and Yan. 
In this paper, we carried out the systematic study of inversion sequences avoiding two patterns of length $3$. Our enumerative results establish further connections to the OEIS sequences and some classical combinatorial objects, such as restricted permutations, weighted ordered trees and set partitions.
Since patterns of relation triples are some special multiple patterns of length $3$, our results complement the work by Martinez and Savage. In particular, one of their conjectures regarding the enumeration of $(021,120)$-avoiding inversion sequences is solved. 
\end{abstract}

\section{Introduction}
A {\em permutation} of $[n]:=\{1,2,\ldots,n\}$ is a word $\pi_1\pi_2\cdots\pi_n$ such  that 
$\{\pi_1,\pi_2,\ldots,\pi_n\}=[n]$. An {\em inversion sequence} of length $n$ is an integer sequence $(e_1,e_2,\ldots,e_n)$ with the restriction $0\leq e_i\leq i-1$ for all $1\leq i\leq n$. There are several interesting bijections between $\S_n$, the set of permutations of $[n]$, and $\I_n$, the set of inversion sequences of length $n$, which are known as {\em codings} of permutations (see~\cite{cor,kl,ms,sav} and the references therein). Perhaps, the most natural coding in the literature is the so-called {\em Lehmer code} $\Theta: \S_n\rightarrow\I_n$ of permutations define 
for $\pi=\pi_1\pi_2\cdots\pi_n\in\S_n$ as 
$$
\Theta(\pi)=(e_1,e_2,\ldots,e_n),\quad\text{ where $e_i:=|\{j: j<i\text{ and }\pi_j>\pi_i\}|$}.
$$
Permutations and inversion sequences are viewed  as words over $\N$. A word $W=w_1w_2\cdots w_n$ is said to avoid the word (or {\em pattern}) $P=p_1p_2\cdots p_k$ ($k\leq n$) if there does not exist $i_1<i_2<\cdots<i_k$ such that the subword $w_{i_1}w_{i_2}\cdots w_{i_k}$ of $W$ is order isomorphic  to $P$. For example, the word $W = 32421$ contains the pattern $231$, because the subword $w_2w_3w_5 = 241$ of $W$ has the same relative order as $231$. However, $W$ avoids both  $101$ and $123$.
For a set of words $\mathcal{W}$ and a sequence of patterns  $P_1,\ldots,P_r$, let us denote $\mathcal{W}(P_1,\ldots,P_r)$ the set of words in $\mathcal{W}$ which avoid all patterns $P_i$ for $i=1,\ldots,r$.

The enumerative aspect of pattern avoiding permutations has been the focus for over a half century since MacMahon's investigation  of word statistics and Knuth's consideration of stack-sorting algorithm (see Kitaev's monograph~\cite{ki}). Nevertheless, the systematic study of patterns in  inversion sequences was initiated only around 2015 by Corteel--Martinez--Savage--Weselcouch~\cite{cor} and Mansour--Shattuck~\cite{mash}, where the enumeration of inversion sequences avoiding a single pattern of length $3$ were nearly completed.  Two remarkable connections are 
$$
|\I_n(021)|=S_n\quad\text{and}\quad|\I_n(000)|=E_n,
$$
where $S_n$ is the $n$-th {\em large Schr\"oder number} and $E_n$ is the $n$-th {\em Euler up-down number}. Their work has inspired lots of further investigations~\cite{auli,auli2,bbgr,bgrr,cjl,kl,kl2,lin,lin2,ms,lyan,yan} of patterns in inversion sequences from the perspective of enumeration.
Especially, it has sparked various studies  of generalized patterns in inversion sequences, including  patterns of relation triples  by Martinez and Savage~\cite{ms}, consecutive patterns by Auli and Elizalde~\cite{auli,auli2}, and vincular patterns by Lin and Yan~\cite{lyan}. 

Motivated by the above works, the objective of this paper is to investigate systematically  
inversion sequences avoiding two patterns of length $3$. Our study complements the work by Martinez and Savage~\cite{ms} on enumeration of inversion sequences avoiding a fixed triple of binary relations $(\rho_1,\rho_2,\rho_3)\in\{<,\,>,\,\leq,\,\geq,\,=,\,\neq,\,-\}^3$. 
Following~\cite{ms}, consider
  the set $\I_n(\rho_1,\rho_2,\rho_3)$ consisting of those $e\in\I_n$  with no $i<j<k$ such that 
$$e_i\,\rho_1\,e_j,\quad e_j\,\rho_2\,e_k\quad\text{and}\quad
e_i\,\rho_3\,e_k.$$ Here the relation $''-''$ on a set $S$ is all of $S\times S$, i.e.,
$x\,''\!\!-''y$ for all $x,y\in S$. For example, we have   $\I_n(\geq,\neq,\geq)=\I_n(101,110,201,210)$, $\I_n(<,-,<)=\I_n(012,021,011)$ and $\I_n(>,-,\leq)=\I_n(101,102)$. In general,  patterns of relation triples are some special multiple patterns of length $3$, so our study complements the work~\cite{ms} in this sense.

\begin{table}
\hrule
\vspace{.1in}

{\small
\begin{tabbing}
xxxxxxxxxxxxxxxx\=xxxxxxxxxxxxxxxxxxxxxxxxxxx\= xxxxxxxxxxxxxx\=xxxxxxxxxxxxx\= \kill

Pattern pair $p$\> $a_n=|\I_n(p)|$ counted by: \> solved?\> OEIS\>  $a_8$, equiv class\\
\\

$(001,010)$\> 	 $n$ \> Sec.~\ref{sec:n}  \> A000027    \>8,A\\
$(001,011)$\> 	 $n$ \> Sec.~\ref{sec:n}  \> A000027    \>8,B\\
$(001,012)$\> 	 $n$ \> Sec.~\ref{sec:n}  \> A000027    \>8,C\\


$(001,110)$\> 	Lazy caterer sequence \> Sec.~\ref{sec:29} \> A000124    \>29,A\\
$(001,021)$\> 	 Lazy caterer sequence \> Sec.~\ref{sec:29} \> A000124    \>29,B\\
$(001,120)$\> 	 Lazy caterer sequence \> Sec.~\ref{sec:29} \> A000124    \>29,C\\
$(000,001)$\> 	 Fibonacci number $F_{n+1}$ \> Sec.~\ref{sec:fibo}  \> A000045    \>34\\
$(001,100)$\> 	 $F_{n+2}-1$ \> Sec.~\ref{sec:54}  \> A000071   \>54\\

$(001,210)$\> 	 Cake number ${n\choose 3}+n$ \> Sec.~\ref{sec:cake} \> A000125    \>64\\
$(000,011)$\> 	 $2^{n-1}$ \> Sec.~\ref{sec:power} \> A000079    \>128,A\\
$(001,101)$\> 	 $2^{n-1}$ \> Sec.~\ref{sec:power}  \> A000079    \>128,B\\
$(001,102)$\> 	 $2^{n-1}$ \> Sec.~\ref{sec:power} \> A000079    \>128,C\\
$(001,201)$\> 	 $2^{n-1}$ \> Sec.~\ref{sec:power}  \> A000079    \>128,D\\
$(010,012)$\> 	 $2^{n-1}$ \> Sec.~\ref{sec:power}  \> A000079    \>128,E\\
$(011,012)$\> 	 $2^{n-1}$ \> Sec.~\ref{sec:power} \> A000079    \>128,F\\

$(110,012)$\> 	 $2^n-n$ \> Sec.~\ref{sec:248}  \> A000325    \>248,A\\
$(012,021)$\> 	 $2^n-n$ \> Sec.~\ref{sec:248} \> A000325   \>248,B\\


$(012,201)$\> 	 $|\S_n(321,2143)|$ \>Sec.~\ref{sec:411} \> A088921    \>411,A\\
$(012,210)$\> 	 $|\S_n(321,2143)|$ \> Sec.~\ref{sec:411}\> A088921    \>411,B\\
$(011,102)$\> 	 $F_{2n-1}$ \> Sec.~\ref{sec:610}  \> A001519   \>610,A\\
$(012,102)$\> 	 $F_{2n-1}$ \> Sec.~\ref{sec:610}  \> A001519   \>610,B\\
$(012,120)$\> 	 $F_{2n-1}$ \> Sec.~\ref{sec:610}  \> A001519   \>610,C\\
$(010,011)$\> 	 $\sum_{k=0}^{n-1}(n-k)^k$ \> Sec.~\ref{sec:733} \> A026898    \>733\\
$(011,021)$\> 	 Catalan number $C_n$ \> Sec.~\ref{sec:cat} \> A000108    \>1430,A\\
$(010,021)$\> 	 Catalan number $C_n$ \> Sec.~\ref{sec:cat} \> A000108    \>1430,B\\
$(011,201)$\> 	 $|\I_n(-,>,\geq)|=|\I_n(\neq,\geq,\geq)|$ \> Open \> A279555    \>3091,A\\
$(011,210)$\> 	 $|\I_n(-,>,\geq)|=|\I_n(\neq,\geq,\geq)|$ \> Open \> A279555    \>3091,B\\
$(021,120)$\> 	 $1+\sum_{i=1}^{n-1}{2i\choose i-1}$ \>Sec.~\ref{sec:4.1}\> A279561    \>4082,A\\
$(102,120)$\> 	 $1+\sum_{i=1}^{n-1}{2i\choose i-1}$ \>Sec.~\ref{sec:4.2}\> A279561    \>4082,B\\
$(110,102)$\> 	 $1+\sum_{i=1}^{n-1}{2i\choose i-1}$ \>Sec.~\ref{sec:4.3}\> A279561    \>4082,C\\

$(010,100)$\> 	 Bell number $B_n$ \> Sec.~\ref{sec:bell}  \> A000110    \>4140,D\\
$(011,101)$\> 	 Bell number $B_n$ \> Sec.~\ref{sec:bell} \> A000110    \>4140,E\\
$(011,110)$\> 	 Bell number $B_n$ \> Sec.~\ref{sec:bell}  \> A000110   \>4140,F\\


$(101,021)$\> 	 $|\S_n(4123,4132,4213)|$ \>Sec.~\ref{sec:tree}\> A106228    \>5798,B\\

$(021,201)$\> 	 Large Schr\"oder number $S_n$ \>Sec.~\ref{sec:sch}\> A006318    \>8558,A\\
$(021,210)$\> 	 Large Schr\"oder number $S_n$ \>Sec.~\ref{sec:sch}\> A006318    \>8558,B\\
$(101,110)$\> 	 indecomposable set partitions \> Sec.~\ref{sec:irre} \> A074664    \>11624
\vspace{.1in}
\end{tabbing}
}

\hrule
\vspace{.1in}

\caption{Pattern pairs whose avoidance sequences appear to match sequences in the OEIS.}
\label{invseq:pairs}
\end{table}


\begin{table}
\hrule
\vspace{.1in}

{\small
\begin{tabbing}
xxxxxxxxxxxxxx\=xxxxxxxxxxxxxxxxxxxxxxxxxxx\= xxxxxxxxxxxxxxxxx\=xxxxxxxxxxx\= \kill

Pattern pair $p$\> $a_n=|\I_n(p)|$ counted by: \> calculated?\> in OEIS?\>  $a_8$, equiv class\\
\\

$(000,012)$\> $1, 2, 4, 5, 21, 0, 0,\ldots$	  \> Ultimately zero\> new    \>0\\

$(100,012)$\> $1, 2, 5, 12, 27, 56, 110,\ldots$	  \> Open\> New    \>207\\

$(101,012)$\> 	 $|\S_n(321,2413,3142)|$ \> In~\cite[Sec.~2.8]{ms}\> A034943    \>351\\
$(000,021)$\> $1, 2, 5, 14, 39, 111, 317,\ldots$	  \> Open\> New    \>911\\


$(000,102)$\> $1, 2, 5, 14, 40, 121, 373,\ldots$	  \> Open\> New    \>1181\\

$(000,010)$\> $1, 2, 4, 10, 29, 95, 345,\ldots$	  \> Open\> A279552    \>1376\\


$(011,120)$\> $1, 2, 5, 14, 42, 132, 431,\ldots$	  \> Open\> New    \>1452\\

$(100,011)$\> 	 Nexus numbers \> In~\cite[Sec.~2.15]{ms}  \> A047970   \>2048\\
 

$(021,102)$\> $1, 2, 6, 20, 66, 213, 683,\ldots$	  \> Open\> New    \> 2211\\

$(010,102)$\> $1, 2, 5, 15, 51, 186, 707,\ldots$	  \> Open\> New    \>2763\\

$(000,120)$\> $1, 2, 5, 15, 50, 185, 737, \ldots$	  \> Open\> New    \>3126\\

$(010,120)$\> $1, 2, 5, 15, 52, 201, 845, \ldots$	  \> Open\> A279559    \>3801\\

$(010,110)$\> $1, 2, 5, 15, 52, 201, 847,  \ldots$	  \> Open\> New    \> 3836\\


$(010,101)$\> 	 Bell number $B_n$ \> In~\cite[Sec.~2.17]{ms}  \> A000110   \>4140,A\\
$(000,101)$\> 	 Bell number $B_n$ \> In~\cite[Sec.~6]{cjl}  \> A000110    \>4140,B\\
$(000,110)$\> 	 Bell number $B_n$ \> In~\cite[Sec.~6]{cjl}  \> A000110   \>4140,C\\


$(100,021)$\> $1, 2, 6, 21, 78, 297, 1144,  \ldots$	  \> Open\> New    \> 4433,A\\
$(110,021)$\> Wilf-equiv to 4433,A (Sec.~\ref{sec:wilf})	  \> Open\> New    \> 4433,B\\

$(010,201)$\> $1, 2, 5, 15, 53, 214, 958, \ldots$	  \> Open\> New    \> 4650,A\\
$(010,210)$\> Wilf-equiv to 4650,A (Sec.~\ref{sec:wilf})	  \> Open\> New    \> 4650,B\\


$(100,102)$\> $1, 2, 6, 21, 80, 318, 1305,  \ldots$	  \> Open\> New    \> 5487\\


$(102,210)$\> $1, 2, 6, 22, 87, 351, 1416,\ldots$	  \> Open\> New    \> 5681\\


$(101,102)$\> 	 $|\S_n(4123,4132,4213)|$ \> In~\cite[Sec.~4]{cjl}\> A106228    \>5798,A\\


$(102,201)$\> $1, 2, 6, 22, 87, 354, 1465, \ldots$	  \> Open\> A279566    \> 6154\\


$(000,201)$\> $1, 2, 5, 16, 60, 257, 1218, \ldots$	  \> Open\> New    \> 6270,A\\
$(000,210)$\> Wilf-equiv to 6270,A (Sec.~\ref{sec:wilf})	  \> Open\> New    \> 6270,B\\
$(000,100)$\> $1, 2, 5, 16, 60, 260, 1267,\ldots$	  \> Open\> A279564    \> 6850\\

$(101,120)$\> $1, 2, 6, 22, 90, 397, 1859,\ldots$	  \> Open\> New    \> 9145\\

 $(100,120)$\> $1, 2, 6, 22, 92, 421, 2062,\ldots$	  \> Open\> New    \>  10646\\
$(110,120)$\> $1, 2, 6, 22, 92, 423, 2091,\ldots$	  \> Open\> A279570    \>  10950\\
$(100,110)$\> $1, 2, 6, 22, 93, 437, 2233, \ldots$	  \> Open\> New    \> 12227\\
$(100,101)$\> $1, 2, 6, 22, 93, 439, 2267,\ldots$	  \> Open\> New    \> 12628\\

$(120,201)$\> $1, 2, 6, 23, 102, 498, 2607,\ldots$	  \> Open\> New    \> 14386\\

$(120,210)$\> $1, 2, 6, 23, 102, 499, 2625, \ldots$	  \> Open\> A279573    \> 14601\\

$(110,201)$\> $1, 2, 6, 23, 103, 512, 2739,\ldots$	  \> Open\> New    \> 15464\\

$(101,210)$\> $1, 2, 6, 23, 103, 513, 2763,\ldots$	  \> Open\> New    \>  15816\\

$(101,201)$\> 	 $|\S_n(21\bar{3}54)|$ \> In~\cite[Sec.~2.27]{ms}\> A117106    \>17734,A\\
$(100,210)$\> 	 $|\S_n(21\bar{3}54)|$ \> In~\cite[Sec.~3.3]{bgrr}\> A117106    \>17734,B\\
$(100,201)$\> 	 $|\S_n(21\bar{3}54)|$ \> In~\cite[Sec.~2.27]{ms}\> A117106    \>17734,C\\
$(110,210)$\> 	 $|\S_n(21\bar{3}54)|$ \> In~\cite[Sec.~2.27]{ms}\> A117106    \>17734,D\\
$(201,210)$\> 	$|\S_n(MMP(0,2,0,2))|$  \> In~\cite[Sec.~2.29]{ms}\> A212198    \>23072


\vspace{.1in}

\end{tabbing}
}

\hrule
\vspace{.1in}

\caption{Pattern pairs whose avoidance sequences either have been studied in~\cite{bgrr,ms,cjl} or  appear to be new in the OEIS.}
\label{invseq:pairs2}
\end{table}


Our work is also parallel to the work by Baxter and Pudwell~\cite{bp} on pairs of patterns of length $3$ in {\em ascent sequences},
one of the most important subsets of inversion sequences introduced by Bousquet-M\'elou et al.~\cite{bcdk} to encode the unlabeled $(2$+$2)$-free posets. It should be noted that the enumeration of pattern avoiding ascent sequences was first carried out by Duncan and Steingr\'imsson in~\cite{ds} and followed by other researchers in~\cite{bp,pu,yan0}, even a few years earier than the study on inversion sequences, to our surprise.

The enumerative results obtained in this paper are summarized in Table~\ref{invseq:pairs}, which establish connections to the OEIS sequences and some classical combinatorial objects, such as restricted permutations, weighted ordered trees and irreducible  set partitions. In particular, a conjecture of Martinez and Savage~\cite{ms} regarding the enumeration of $(021,120)$-avoiding inversion sequences is solved. Our work together with~\cite{bgrr,ms,cjl}  completely classify all the Wilf-equivalences for inversion sequences avoiding pairs of length-$3$ patterns. Moreover, two related {\em unbalanced} Wilf-equivalence conjectures, presented as Conjectures~\ref{conj:wilf} and~\ref{conj:wilf2}, arise in this work.  

Since all the Wilf-equivalences for pattern pairs $p$ of length $3$ can be distinguished by the number $|\I_8(p)|$, we shall denote the class of $p$-avoiding inversion sequences by class  $|\I_8(p)|$. Within a Wilf class, equivalence classes are labeled A,B,C, etc.; see the  last column of Table~\ref{invseq:pairs} or~\ref{invseq:pairs2}. For example, class 128B represents the class of $(001,101)$-avoiding inversion sequences.

\subsection*{Statistics on inversion sequences}
For an inversion sequence $e=(e_1,e_2,\ldots,e_n)\in\I_n$, define the following five classical statistics:
\begin{itemize}
\item $\asc(e):=|\{i\in[n-1]: e_i<e_{i+1}\}|$, the number of {\em ascents} of $e$;
\item $\dist(e):=|\{e_1,e_2,\ldots,e_n\}\setminus\{0\}|$, the number of {\em distinct positive entries} of $e$;
\item $\rmin(e):=|\{i\in[n]: e_i<e_j \text{ for all $j>i$\,}\}|$, the number of {\em right-to-left minima} of $e$;
\item $\zero(e):=|\{i\in[n]: e_i=0\}|$, the number of {\em zero entries} in $e$;
\item $\satu(e):=|\{i\in[n]:e_i=i-1\}|$, the number of {\em saturated entries} of $e$.
\end{itemize}
Note that, over inversion sequences, the first two statistics are {\em Eulerian}, while the last three statistics are {\em Stirling} (See for example~\cite{kl,fjlyz}). These five statistics on restricted inversion sequences have been extensively studied in the literature (cf.~\cite{cjl,cor,ds,fjlyz,kl,kl2,lyan,ms}) and will play important roles in solving some enumerative problems in this paper. 

\subsection*{Notations}
Throughout this paper, we use the notation 
$$
\chi(\mathsf{S})=
\begin{cases}
1,\quad\text{if the statement $\mathsf{S}$ is true};\\
0,\quad\text{otherwise}.
\end{cases}
$$
We will also use the Kronecker delta function: $\delta_{i,j}$ equals $1$ if $i=j$; and $0$, otherwise.

\section{Some simple results}

This section contains some results that are obtained by elementary discussions. Some of the avoiding classes are even proved to be the same as these that have been studied in the literature. For clarity, all of them are listed below: 
\begin{align*}
&\I_n(001,110)=\I_n(\geq,\neq,-),\quad\I_n(000,001)=\I_n(=,\leq,-), \quad\I_n(001,100)=\I_n(\geq,\leq,\neq),\\
&\I_n(000,011)=\I_n(\leq,=,-),\quad \I_n(011,012)=\I_n(<,\leq,-), \quad\I_n(012,201)=\I_n(\neq,<,\neq),\\
&\I_n(001,101)=\I_n(001),\qquad\,\,\,\, \I_n(001,102)=\I_n(001),\qquad\,\,\,\,\,\I_n(001,201)=\I_n(001),\\
&\I_n(021,210)=\I_n(021),\qquad\,\,\,\, \I_n(021,201)=\I_n(021).
\end{align*}
The reason why they are equal will be given  throughout this section.
\subsection{Classes 8(A,B,C): $n$}
\label{sec:n}

\begin{theorem}
For any $n\geq1$ and a pattern pair $p\in\{(001,010),(001,011), (001,012)\}$, we have $|\I_n(p)|=n$.
\end{theorem}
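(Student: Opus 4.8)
The plan is, for each of the three pairs, to guess an explicit list of exactly $n$ avoiders and then prove that the list is complete. I expect the avoiders to be: for $(001,010)$ the \emph{freezing staircases} $e_i=\min(i-1,m)$ with $0\le m\le n-1$ (i.e. $0,1,2,\ldots$ increasing by one until it locks at height $m$); for $(001,011)$ the sequences $(0,1,\ldots,k,0,0,\ldots,0)$ with $0\le k\le n-1$ (a staircase that collapses to $0$); and for $(001,012)$ the binary words $(0,1^{\,j},0^{\,n-1-j})$ with $0\le j\le n-1$. Each family plainly has $n$ members, and since every listed sequence uses only two or three values arranged monotonically, checking that it avoids its pair is immediate (for instance a $(001,012)$-member takes values in $\{0,1\}$, so no strictly increasing triple $012$ exists, and its single repeated value $0$ is never followed by a larger entry, so no $001$ exists). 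The real work is the reverse inclusion.

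The common engine is one structural consequence of avoiding $001$: if $e_a=e_b$ with $a<b$, then $e_c\le e_a$ for every $c>b$, for otherwise $e_a=e_b<e_c$ realizes $001$. Avoiding $010$ supplies the companion fact that $e_c\le e_a$ also holds for every $c$ with $a<c<b$. For $(001,010)$ I will first apply both facts to the value $0$: if $0$ recurs, say at positions $1<b$, then all intermediate and all subsequent entries are $\le 0$, so $e$ is the all-zero sequence. Otherwise $e_2,\ldots,e_n\ge 1$, and I will pass to $f_j:=e_{j+1}-1$. One checks $f\in\I_{n-1}$, that $f$ still avoids $(001,010)$ because subtracting a constant preserves order-isomorphism of subwords, and that the minimum entry $e_1=0$ can never participate in an occurrence of $001$ or $010$ (it would have to match a later $0$, which no longer exists); hence $e\mapsto f$ is a bijection onto $\I_{n-1}(001,010)$, and induction on $n$ gives $|\I_n(001,010)|=1+(n-1)=n$ together with the explicit freezing-staircase form.

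For $(001,011)$ I will take the maximal initial run on which $e_i=i-1$, say $e_1\cdots e_t=0\,1\cdots(t-1)$. If $t<n$, then $e_{t+1}\le t-1$, so $e_{t+1}=v$ equals some earlier value $v\in\{0,\ldots,t-1\}$ appearing at position $v+1$; if $v\ge1$ the triple at positions $v,\,v+1,\,t+1$ reads $v-1,\,v,\,v$, an occurrence of $011$, so necessarily $v=0$, and then the two zeros at positions $1<t+1$ force all later entries to vanish by the $001$-fact, yielding $(0,1,\ldots,t-1,0,\ldots,0)$. For $(001,012)$ I will first pin the alphabet to $\{0,1\}$: at the least $m$ with $e_m\ge 2$ (so $m\ge3$), a $1$ occurring before $m$ gives $0<1<e_m$, an $012$, while the absence of any such $1$ forces $e_1=e_2=0<e_m$, an $001$; either way a contradiction. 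Once $e\in\{0,1\}^n$, avoidance of $001$ says exactly that no $1$ follows the second $0$, which pins down the shape $(0,1^{\,j},0^{\,n-1-j})$.

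The step I expect to be the main obstacle is not any single inequality but the passage from the two purely \emph{local} forbidden triples to a fully \emph{rigid monotone skeleton}. In particular the $(001,010)$ case requires the most care: one must confirm that the shifted word $f_j=e_{j+1}-1$ is a genuine inversion sequence, that pattern-avoidance is inherited under the shift, and—crucially—that the retained minimum $e_1=0$ introduces no new occurrence of either pattern, so that the induction is an honest bijection rather than a mere inequality.
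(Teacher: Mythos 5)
Your proposal is correct and takes essentially the same approach as the paper: the paper's proof consists of three unproved structural Observations whose content is precisely your three explicit families (the frozen staircase, the staircase collapsing to zeros, and $(0,1^{j},0^{\,n-1-j})$), since the inversion-sequence bound $e_i\le i-1$ forces any strictly increasing prefix to be $0,1,2,\ldots$. The only difference is one of detail: you actually prove completeness, via the strip-and-shift induction for $(001,010)$ and direct arguments for the other two pairs, whereas the paper records the characterizations as immediate observations and counts the resulting $n$ sequences.
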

This result is an immediate consequence of the following three simple observations.
\begin{Observation}
An inversion sequence $e\in\I_n$  is $(001,010)$-avoiding if and only if  for some $t\geq1$,
$$
e_1<e_2<\cdots < e_t=e_{t+1}=\cdots=e_n.
$$
\end{Observation}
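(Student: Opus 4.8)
The plan is to characterize the $(001,010)$-avoiding inversion sequences directly by analyzing what these two forbidden patterns jointly prohibit, and then show the resulting structure is exactly the claimed one. Recall that the pattern $001$ forbids any occurrence $e_i=e_j<e_k$ with $i<j<k$ (two equal entries followed by a strictly larger one), while $010$ forbids any occurrence $e_i=e_k<e_j$ with $i<j<k$ (two equal entries with a strictly larger entry strictly between them). First I would prove the ``if'' direction, which is the easy half: if $e$ has the form $e_1<e_2<\cdots<e_t=e_{t+1}=\cdots=e_n$ for some $t\geq1$, then any two equal entries must both lie in the terminal constant block $\{e_t,\ldots,e_n\}$, and every entry at or after position $t$ equals $e_t$, so there can be no entry strictly larger than $e_t$ appearing after, nor strictly between, two equal entries. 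Hence $e$ avoids both $001$ and $010$.

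The substance is the ``only if'' direction. The key observation to extract is that in a $(001,010)$-avoiding sequence, the first repetition of a value is fatal to any subsequent strict increase. Concretely, I would argue as follows. Let $t$ be the smallest index such that $e_t=e_{t+1}$ if such an index exists, and set $t=n$ otherwise. I claim that $e_1<e_2<\cdots<e_t$ and that $e_t=e_{t+1}=\cdots=e_n$. For the strict-increase prefix: among $e_1,\ldots,e_t$ there are no two equal consecutive entries by minimality of $t$; to upgrade ``no equal consecutive entries'' to ``strictly increasing'' I must rule out a strict descent $e_{i}>e_{i+1}$ inside the prefix, and I must also rule out equal-but-not-consecutive entries. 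Both are controlled by the patterns: an equal pair $e_i=e_j$ with $i<j\leq t$ would, together with the later repeated value forming a larger or intervening entry, create a forbidden occurrence; more carefully, once two equal values occur anywhere, the avoidance of $001$ and $010$ forces everything after the earlier of them to be weakly constrained.

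The cleanest route, which I would adopt, is to first establish the terminal-constancy claim and then deduce the strict prefix. Starting from the earliest equality $e_t=e_{t+1}$, I would show by induction that $e_j=e_t$ for all $j>t$: given $e_t=e_{t+1}=\cdots=e_{j-1}=e_t$, the entry $e_j$ cannot exceed $e_t$ (else $e_t\,e_{t+1}\,e_j$ is a $001$ pattern, using the equal pair $e_t=e_{t+1}$ followed by the larger $e_j$), and $e_j$ cannot be smaller than $e_t$ either, because then taking any two equal copies before it and... here I must instead use that $e_j<e_t=e_{t+1}$ would place the smaller value after two equal larger values, which is not immediately a $001$ or $010$ pattern, so the argument needs the interplay with the prefix: I would use $010$ by noting that if some later entry dips below $e_t$ and then the value $e_t$ recurs, we get $010$, and otherwise exploit that a strict descent after a repeated value leaves the repeated value as the ``$e_i=e_k$'' feet of a $010$ together with an intervening larger entry. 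Once terminal constancy is secured, the prefix $e_1,\ldots,e_t$ contains no repeated value (any earlier repeat $e_i=e_j<e_t$ with $i<j<t$ combines with $e_t$ to give $001$, and $e_i=e_j=e_t$ contradicts minimality of $t$), and no descent (a descent $e_i>e_{i+1}$ with both at most... ) forces $e_i,e_{i+1}$ to straddle the constant tail and produce $010$ via $e_{i+1}=e_i'$ pattern feet.

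The main obstacle I anticipate is precisely this last bookkeeping: cleanly ruling out \emph{both} descents and non-adjacent repetitions in the prefix using only the two length-$3$ patterns, since neither pattern directly forbids a bare descent. The resolution is to always pair a prefix anomaly with the forced constant tail value $e_t$, which acts as the third coordinate realizing $001$ (when the tail value is strictly larger than a repeated prefix value) or $010$ (when a prefix entry creates a peak over two equal flanking values). I expect that a short case analysis on whether the anomalous prefix value lies below, at, or above $e_t$ will dispatch all cases, and I would organize the writeup as two lemmas—terminal constancy first, strict prefix second—to keep the pattern-matching transparent.
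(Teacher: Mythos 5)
Your ``if'' direction is fine. The ``only if'' direction, however, has a genuine gap, and it sits exactly at the step you yourself flagged as the main obstacle. In your inductive proof of terminal constancy, the case $e_j<e_t$ (a dip below the constant run) is handled by two arguments, and both fail. First, ``if some later entry dips below $e_t$ and then the value $e_t$ recurs, we get $010$'' is incorrect: the subword $e_t,\,e_j,\,e_k$ with $e_t=e_k>e_j$ is an occurrence of the pattern $101$, not $010$, and $101$ is not forbidden here. Second, using the repeated value $e_t$ as the two feet of a $010$ requires an entry \emph{strictly larger} than $e_t$ lying \emph{strictly between} two copies of $e_t$; but the copies $e_t=e_{t+1}=\cdots=e_{j-1}$ are consecutive, so no intervening entry exists at all. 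A similar problem affects your second lemma: a non-consecutive repeat $e_i=e_j=e_t$ in the prefix is not excluded by the minimality of $t$, since minimality only rules out \emph{consecutive} equalities, and the garbled descent case is never actually closed.

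The missing idea is that this is not a statement about words avoiding $001$ and $010$ --- for arbitrary words it is false: $(0,2,1)$ avoids both patterns and is not increasing-then-constant --- so any correct proof must use the defining bound $e_i\leq i-1$ of inversion sequences, which your argument never invokes. The standard way to use it: let $t$ be the length of the \emph{maximal strictly increasing prefix} (rather than the first consecutive equality). Since $e_1=0$ and $e_i\leq i-1$, strict increase forces $e_i=i-1$ for all $i\leq t$, so the prefix values are exactly $\{0,1,\ldots,t-1\}$. Now everything closes in three lines: if $t<n$ then $e_{t+1}\leq e_t$ by maximality, and $e_{t+1}<e_t=t-1$ would give $e_{t+1}=e_i$ for some $i<t$, whence positions $i<t<t+1$ form a genuine $010$; so $e_{t+1}=e_t$. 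Inductively, if $e_t=e_{t+1}=\cdots=e_{j-1}$ and $e_j>e_t$, then positions $t,t+1,j$ form a $001$; and if $e_j<e_t$, then again $e_j=e_i$ for some $i<t$ and positions $i,t,j$ form a $010$. It is precisely this saturation-plus-collision mechanism that converts a ``bare descent'' (which, as you correctly observed, neither pattern forbids) into a forbidden occurrence; without it, the dip-below case in your scheme cannot be completed.
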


\begin{Observation}
An inversion sequence $e\in\I_n$  is $(001,011)$-avoiding if and only if either $e=(0,0,\ldots,0)$ or for some $t\geq2$,
$$
e_1<e_2<\cdots < e_t>e_{t+1}=e_{t+2}=\cdots=e_n=0.
$$
\end{Observation}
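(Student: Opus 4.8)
The plan is to prove both implications, treating the easy direction first and then isolating the one genuinely structural step in the converse.

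For the ``if'' direction I would simply verify that each sequence of the two prescribed shapes avoids both patterns. The all-zero sequence contains no strict inequality, so it vacuously avoids every pattern having an ascent, in particular $001$ and $011$. For a sequence $e$ with $e_1<\cdots<e_t>e_{t+1}=\cdots=e_n=0$ and $t\ge 2$, the entries $e_1,\dots,e_t$ are pairwise distinct while every later entry is $0$; hence the only value that repeats is $0$, occurring at position $1$ and throughout the suffix. A would-be $001$-occurrence $e_i=e_j<e_k$ would need the equal pair to be two zeros, forcing $j\ge t+1$ and thus $e_k=0$, a contradiction; a would-be $011$-occurrence $e_i<e_j=e_k$ would need $e_j=e_k=0$ and $e_i<0$, which is impossible. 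This settles sufficiency.

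For the ``only if'' direction, assume $e$ avoids both patterns and is not identically zero. The first key step uses $e_1=0$: if some positive value $v$ occurred at two positions $p<q$ (necessarily $p\ge 2$), then $1<p<q$ with $e_1=0<v=e_p=e_q$ would be a copy of $011$. Hence \emph{all positive entries of $e$ are distinct}, so $0$ is the only value that can repeat. The second step uses $001$-avoidance: writing the zero positions as $1=z_1<z_2<\cdots$, the pair $z_1<z_2$ together with any later positive entry would give $e_{z_1}=e_{z_2}<e_k$, a copy of $001$; therefore every entry after $z_2$ vanishes. Consequently the zeros occupy exactly $\{1\}$ together with a (possibly empty) terminal block, and setting $t:=z_2-1$ (or $t:=n$ if $0$ occurs only at position $1$) we obtain $e_1=0$ with $e_2,\dots,e_t$ positive and distinct and $e_{t+1}=\cdots=e_n=0$; since $e$ is not all zero, one checks $t\ge 2$.

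The heart of the argument, and the step I expect to be the main obstacle, is showing that the initial block $e_1,\dots,e_t$ is actually \emph{increasing} rather than merely a distinct arrangement. My plan here is to avoid hunting for a further forbidden pattern and instead to exploit the inversion-sequence bound $e_i\le i-1$. Since $e_1,\dots,e_t$ are $t$ pairwise distinct values each lying in $\{0,1,\dots,t-1\}$, they must be precisely a permutation of $\{0,1,\dots,t-1\}$. Now the value $t-1$ can only sit at a position $i\le t$ with $i-1\ge t-1$, forcing $e_t=t-1$; peeling off this entry and repeating shows $e_i=i-1$ for all $i\le t$. Thus the block is forced to be $0,1,\dots,t-1$, which is strictly increasing with $e_t=t-1>0=e_{t+1}$, exactly the claimed shape (the case $t=n$ giving the strictly increasing sequence $(0,1,\dots,n-1)$ with empty zero-tail). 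Combining the two directions yields the equivalence.
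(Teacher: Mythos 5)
Your proof is correct. The paper offers no proof at all for this Observation (it is declared an immediate consequence), and your argument supplies exactly the verification it leaves to the reader: $011$-avoidance together with $e_1=0$ forces the positive entries to be distinct, $001$-avoidance forces the zeros after position $1$ to form a terminal block, and the inversion-sequence bound $e_i\le i-1$ forces the prefix of distinct entries to be exactly $0,1,\ldots,t-1$ (hence increasing) — this last step being genuinely necessary, since pattern avoidance alone would not rule out a word like $(0,2,1)$.
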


\begin{Observation}
An inversion sequence $e\in\I_n$ is $(001,012)$-avoiding if and only if either $e=(0,0,\ldots,0)$ or for some $t\geq2$,
$$
e_1< e_2=e_3=\cdots =e_t> e_{t+1}=e_{t+2}=\cdots=e_n=0.
$$
\end{Observation}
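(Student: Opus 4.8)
The plan is to prove the final Observation by characterizing exactly which entry-patterns force an occurrence of $001$ or $012$, and then showing the surviving sequences are precisely those of the stated shape. First I would recall what each forbidden pattern means in terms of the values: an occurrence of $001$ is a triple of positions $i<j<k$ with $e_i=e_j<e_k$, and an occurrence of $012$ is a triple $i<j<k$ with $e_i<e_j<e_k$. The key unifying observation is that both patterns end with a \emph{strict ascent} into $e_k$ preceded by a weakly-increasing pair $e_i\leq e_j$. So a sequence $e$ avoids both $001$ and $012$ if and only if there is no triple $i<j<k$ with $e_i\leq e_j<e_k$; equivalently, whenever $e_j<e_k$ for some $j<k$ (an ascent into position $k$), there must be \emph{no} index $i<j$ with $e_i\leq e_j$. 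I would isolate this reformulation as the engine of the proof.

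Next I would translate that condition into the global shape. The reformulation says: at most one strict ascent can occur, and it can only be created by the very first entry. Concretely, suppose $e$ is not identically zero. Let $t$ be chosen so that the initial segment is as described. The condition forbids any $e_i\le e_j<e_k$, so in particular one cannot have two separate places where the sequence strictly increases from a value that already had a weak predecessor. Carrying this out, I would argue that the only admissible "rise" is a single jump $e_1<e_2$ at the start (since $e_1$ has no predecessor $i<1$, the pair $(1,2)$ with $e_1<e_2$ is harmless), after which no further strict ascent $e_j<e_k$ with $j\ge 2$ is allowed — otherwise the triple $(1,j,k)$ with $e_1\le e_j<e_k$ (using $e_1\le e_j$, which holds because $e_1$ is the global minimizer after the initial jump) would realize $001$ or $012$. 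This forces a \emph{constant plateau} $e_2=e_3=\cdots=e_t$ at the top, then a strict drop $e_t>e_{t+1}$, and finally the tail must be all zeros, since any nonzero tail entry either repeats a value (creating a later $00\ast$) or rises again.

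For the converse direction I would simply verify that a sequence of the prescribed form, namely $e_1<e_2=e_3=\cdots=e_t>e_{t+1}=\cdots=e_n=0$, contains no triple $i<j<k$ with $e_i\le e_j<e_k$: any strict inequality $e_j<e_k$ within such a sequence can only be the jump from $e_1$ to the plateau (forcing $j=1$, so no valid $i<j$ exists) or a jump up out of the zero tail, which never happens by construction. Hence no occurrence of $001$ or $012$ exists, and likewise the all-zero sequence trivially avoids both.

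The main obstacle I anticipate is the bookkeeping in the forward direction: proving that once the single initial ascent is used, the remaining entries are forced to form a plateau and then collapse to zeros, without accidentally overlooking a sequence that evades both patterns by some subtler value arrangement (for instance a non-monotone middle with repeated but non-adjacent equal values). The clean way to close this gap is to lean entirely on the reformulated criterion "no $i<j<k$ with $e_i\le e_j<e_k$" rather than reasoning about the two patterns separately; this single inequality chain handles $001$ and $012$ simultaneously and makes the structural deduction essentially a statement about where strict ascents may sit relative to weak predecessors. I would therefore state and prove the reformulation as a short preliminary claim, and let the shape of $e$ follow as a direct corollary.
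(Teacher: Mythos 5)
Your reformulation is correct and is the right engine: an occurrence of $001$ is a triple $i<j<k$ with $e_i=e_j<e_k$ and one of $012$ is such a triple with $e_i<e_j<e_k$, so avoiding both is exactly avoiding $e_i\le e_j<e_k$; and since $e_1=0$ is a weak predecessor of every later entry, this is equivalent to $(e_2,e_3,\ldots,e_n)$ being weakly decreasing. Your converse verification is also fine. The gap is in the last structural step of the forward direction: weak decrease of $(e_2,\ldots,e_n)$ does \emph{not} by itself force ``constant plateau, then all zeros,'' and your justification --- that any nonzero tail entry ``either repeats a value (creating a later $001$) or rises again'' --- is a false dichotomy. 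A tail entry strictly between $0$ and the plateau value that occurs only once does neither; and even a repeated positive tail value creates no occurrence of $001$, because an occurrence requires a strictly larger entry \emph{later}, which a weakly decreasing tail never supplies. Concretely, the word $(0,2,1,0)$ is weakly decreasing after its first entry and avoids both $001$ and $012$, yet it is not of the claimed shape. So no argument relying only on your reformulated criterion (plus $e_1=0$) can prove the Observation; your own proposed remedy for the ``bookkeeping'' obstacle --- leaning entirely on that criterion --- therefore cannot close the gap.

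What is missing is the defining bound of inversion sequences, $e_i\le i-1$, which your proof never invokes beyond $e_1=0$, and which is exactly what excludes the counterexample above: $e_2\le 1$, so if $e$ is not identically zero, weak decrease forces $e_2=1$ and hence every entry to be $0$ or $1$; a weakly decreasing $0/1$ sequence is precisely a block of $1$'s followed by a block of $0$'s, which is the stated form $e_1<e_2=\cdots=e_t>e_{t+1}=\cdots=e_n=0$. Adding this one observation repairs the proof. (For comparison, the paper records no proof at all, treating the statement as immediate; your plan is a reasonable way to write one out, but it is incorrect as stated without the bound $e_2\le 1$.)
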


\subsection{Classes 29(A,B,C): Lazy caterer sequence}
\label{sec:29}
The integer sequence
$$
\{n(n-1)/2+1\}_{n\geq1}=\{1, 2, 4, 7, 11, 16, 22, 29, 37, 46,\ldots\}
$$
appear as~\cite[A000124]{oeis} and is called the {\em Lazy caterer sequence} or the sequence of {\em central polygonal numbers}, which enumerates $(132,321)$-avoiding permutations~\cite{sim}. This sequence also counts three inequivalent classes $\I_n(<,\neq,-)$, $\I_n(<,-,<)$ and $\I_n(\geq,\neq,-)$ of relation triple avoiding inversion sequences~\cite[Sec.~2.4]{ms}.

\begin{theorem}
For any $n\geq1$ and a pattern pair $p\in\{(001,110),(001,021), (001,120)\}$, we have $|\I_n(p)|={n\choose 2}+1$, the $n$-th central polygonal number.
\end{theorem}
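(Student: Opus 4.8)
The plan is to prove this in the same spirit as the preceding Class 8 theorem, namely by establishing for each of the three pattern pairs a structural characterization of the avoiding inversion sequences, and then counting the resulting objects directly. Since the target count $\binom{n}{2}+1$ is what one obtains by summing $0+1+2+\cdots+(n-1)$ and adding $1$, I expect each characterization to describe $e\in\I_n$ as being, up to a single ``free choice'' localized at one coordinate, forced into a very rigid shape; the $\binom{n}{2}$ will then arise as a sum over the position $i$ of that coordinate (roughly $i-1$ admissible values at position $i$), with the extra $+1$ coming from a single degenerate sequence such as the all-zero sequence $(0,0,\ldots,0)$.

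First I would treat $(001,021)$. Avoiding $001$ rules out any weak ascent of the form $e_i=e_j<e_k$ with $i<j<k$, and avoiding $021$ rules out any occurrence $e_i>e_j<e_k$ with $e_i<e_k$, i.e.\ a strict valley whose right arm overshoots its left arm. I would argue that together these force the positive entries of $e$ to be essentially unimodal in a controlled way: once the sequence has produced two equal values it cannot later strictly exceed them, and once it has dipped it cannot recover past its pre-dip level. I expect the clean statement to be that $e$ is either all zero, or has exactly one entry that is a strict left-to-right maximum sitting at some position $i$, with everything before it forced (increasing to that maximum) and everything after it forced (weakly decreasing, or repeating), leaving exactly the value at the peak as the free parameter. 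Counting the legal peak values at each position $i$ (of which there are $i-1$, since $0\le e_i\le i-1$ and the peak must be positive) and summing gives $\sum_{i=2}^{n}(i-1)=\binom{n}{2}$, plus $1$ for the all-zero sequence.

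Next I would handle $(001,110)$ and $(001,120)$ by the same two-step method, deriving for each its own structural normal form. For $(001,110)$ the relevant forbidden shapes are a weak ascent $e_i=e_j<e_k$ and a configuration $e_i>e_j=e_k$ with $e_i>e_j$; I would show these combine to pin down the profile of $e$ to a one-parameter family analogous to the $(001,021)$ case, again yielding $i-1$ choices at a distinguished position $i$ and an exceptional constant sequence. For $(001,120)$ I would do likewise with the pattern $120$ (a descent $e_i>e_k$ preceded by a value $e_j$ with $e_j>e_i>e_k$) replacing $021$. In each of the three cases the final enumeration is the routine sum $1+\sum_{i=2}^{n}(i-1)=\binom{n}{2}+1$, so no separate bijections among the three classes are needed; each count is computed on its own normal form.

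The main obstacle I anticipate is the second step of each case: proving that avoiding the two patterns is \emph{equivalent} to the proposed rigid normal form, rather than merely implied by it. The ``only if'' direction is the delicate one --- I must verify that no avoiding sequence escapes the claimed shape, which requires carefully chaining the two avoidance conditions at the first place where the sequence would deviate (for instance, the first index at which a forbidden repetition, overshoot, or premature recovery occurs) and exhibiting an explicit occurrence of $001$, $021$, $110$, or $120$. Once the three Observation-style characterizations are in hand, the counting is immediate, exactly as in the Class 8 theorem, so I would organize the write-up as three structural observations followed by a one-line summation.
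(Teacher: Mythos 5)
Your overall strategy is exactly the paper's: for each pair, establish a rigid structural characterization of the avoiding sequences (the paper states these as three Observations, with proofs omitted) and then count, the count emerging as a choice of a position plus a value summing to $\binom{n}{2}$, together with one exceptional sequence. However, there is a concrete error that would derail the execution for two of the three pairs: you have misread the patterns $021$ and $110$. An occurrence of $021$ in $e$ is a triple $i<j<k$ with $e_i<e_k<e_j$ (first entry smallest, \emph{middle} entry largest, last entry in between); what you describe --- a strict valley whose right arm overshoots its left arm, i.e.\ $e_j<e_i<e_k$ --- is the pattern $102$. Likewise an occurrence of $110$ is $e_i=e_j>e_k$ (two equal entries followed by a strictly smaller one); what you describe, $e_i>e_j=e_k$, is the pattern $100$. (Your description of $120$ is also ambiguous: if the value $e_j$ ``precedes'' the descent, the configuration you wrote is $210$; the correct reading is $i<j<k$ with $e_k<e_i<e_j$.)

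This matters because the classes you would actually be characterizing lie in different Wilf classes with different counts. Any $001$-avoiding inversion sequence is strictly increasing then weakly decreasing, hence automatically avoids $102$; so under your reading the pair $(001,021)$ yields $\I_n(001,102)=\I_n(001)$, of cardinality $2^{n-1}$ (class 128C in the paper), not $\binom{n}{2}+1$, and in particular your expected normal form with everything after the peak forced cannot be derived from the patterns as you read them. Similarly, your reading of $(001,110)$ yields $\I_n(001,100)$, which the paper shows has cardinality $F_{n+2}-1$ (class 54). With the correct readings, the paper's characterizations are: for $(001,110)$, $e_1<\cdots<e_t\geq e_{t+1}=\cdots=e_n$; for $(001,021)$, $e_1<\cdots<e_t=\cdots=e_s>e_{s+1}=\cdots=e_n=0$; and for $(001,120)$, $e_1<\cdots<e_t=\cdots=e_s>e_{s+1}=\cdots=e_n=e_{t-1}$, where in the last case the tail is forced to equal the \emph{second largest} value $t-2$ rather than $0$ --- a feature your sketch does not anticipate. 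Each of these families has its strictly increasing prefix forced to be $0,1,\ldots,t-1$ by the inversion-sequence condition, so each is a two-parameter family counted by $1+\sum(i-1)=\binom{n}{2}+1$, exactly the counting shape you predict; your plan goes through once the patterns are read correctly.
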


This result is clear from the three characterizations below. 

\begin{Observation}\label{obs:lazy}
An inversion sequence $e\in\I_n$  is $(001,110)$-avoiding if and only if  for some $t\geq1$, 
$$
e_1<e_2<\cdots<e_t\geq e_{t+1}=e_{t+2}=\cdots=e_n.
$$
\end{Observation}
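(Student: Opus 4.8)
The plan is to translate the two pattern conditions into a single transparent constraint and then read off the claimed shape, exactly in the spirit of the three preceding observations. First I would record that for $i<j<k$ the triple $e_ie_je_k$ is order isomorphic to $001$ precisely when $e_i=e_j<e_k$, and to $110$ precisely when $e_i=e_j>e_k$. Hence avoiding both patterns is equivalent to the condition that there is no triple $i<j<k$ with $e_i=e_j\neq e_k$; equivalently, whenever $e_i=e_j$ with $i<j$, then $e_k=e_j$ for every $k>j$. I will call this condition (P) and use it as the working characterization throughout. For the easy (``if'') direction, suppose $e$ has the displayed form, say $e_{t+1}=\cdots=e_n=v$ with $e_1<\cdots<e_t\geq v$. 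If $e_i=e_j$ with $i<j$, then $j\leq t$ is impossible since $e_1<\cdots<e_t$ are distinct; thus $j>t$, so $e_j=v$, and every $k>j$ again lies in the constant block, giving $e_k=v=e_j$. Therefore (P) holds and $e$ avoids $001$ and $110$.

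For the converse I would argue directly from (P) using the inversion constraint $e_i\leq i-1$. If all entries of $e$ are distinct, a one-line induction forces $e_i=i-1$ for every $i$: indeed $e_1=0$, and the $i-1$ distinct earlier values all lie in $\{0,\ldots,i-2\}$, so they exhaust this set and leave $e_i=i-1$; this is the increasing case, taken with $t=n$. Otherwise let $j^{*}$ be the least index whose value repeats an earlier one, say $e_{j^{*}}=e_{i^{*}}$ with $i^{*}<j^{*}$. By minimality $e_1,\ldots,e_{j^{*}-1}$ are pairwise distinct, so the same induction gives $e_i=i-1$ for $i<j^{*}$; in particular this prefix is strictly increasing and $e_{j^{*}}=e_{i^{*}}=i^{*}-1\leq j^{*}-2<j^{*}-1=e_{j^{*}-1}$. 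Applying (P) to the pair $(i^{*},j^{*})$ forces $e_k=e_{j^{*}}$ for all $k>j^{*}$, so $e_{j^{*}}=e_{j^{*}+1}=\cdots=e_n$. Setting $t=j^{*}-1\geq1$ then yields exactly $e_1<\cdots<e_t\geq e_{t+1}=\cdots=e_n$.

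The main obstacle will be the converse, and within it the key point is the observation that any repetition-free (hence strictly increasing) prefix of an inversion sequence is forced to be $0,1,2,\ldots$. This is what guarantees simultaneously that the increasing part is genuine and that the first repeated value drops strictly below $e_t$, so that the inequality ``$\geq$'' in the statement and the constant tail produced by (P) are mutually consistent rather than in conflict. The degenerate cases should be checked explicitly but are routine: the all-zero (more generally constant) sequence is covered by $t=1$, and the identity $(0,1,\ldots,n-1)$ by $t=n$ with an empty constant block. Once these are in place the equivalence follows, and the whole argument mirrors the preceding three observations, each of which encodes a two-pattern avoidance as a single shape constraint.
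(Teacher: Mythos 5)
Your proof is correct. The paper states this as an Observation with no proof at all (the surrounding text says only that the enumeration ``is clear from the three characterizations below''), so there is no written argument to compare against; your reduction of the two patterns to the single condition that any repeated value forces a constant tail, combined with the fact that a repetition-free prefix of an inversion sequence is forced to be $0,1,\ldots,t-1$, is exactly the routine verification the authors left implicit, and all the edge cases (constant sequences, the identity sequence $(0,1,\ldots,n-1)$, the strict drop at the first repeat) are handled correctly.
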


\begin{remark}
Comparing Observation~\ref{obs:lazy} with~\cite[Observation~11]{ms} we see that the two sets $\I_n(001,110)$ and $\I_n(\geq,\neq,-)$ are the same. 
\end{remark}

\begin{Observation}
An inversion sequence $e\in\I_n$  is $(001,021)$-avoiding if and only if  for some $t\geq1$ and $s\geq t$,
$$
e_1<e_2<\cdots< e_t=e_{t+1}=\cdots=e_s>e_{s+1}=e_{s+2}=\cdots =e_n=0.
$$
\end{Observation}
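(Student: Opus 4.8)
The plan is to establish the stated characterization by a direct structural analysis of which inversion sequences in $\I_n$ simultaneously avoid $001$ and $021$. First I would identify the common feature of the two forbidden patterns: both $001$ and $021$ begin with a strictly increasing pair of entries in the first two positions (in pattern $001$ the first two values are $0<0$ — wait, that is equal; more precisely $001$ has a strict rise from the second to the third position while $021$ has $e_i<e_j$ then $e_j>e_k$). The key observation is that together these two patterns forbid, after any value has been repeated or exceeded, the appearance of a \emph{new larger} value. Concretely, I claim that avoiding both $001$ and $021$ forces the sequence to have a single strictly increasing initial run, followed by a constant plateau at the maximum, followed by a drop to zeros.

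The key steps, in order, are as follows. Suppose $e\in\I_n(001,021)$ and let $t$ be the position of the last entry of the maximal strictly increasing prefix, so $e_1<e_2<\cdots<e_t$ and $e_{t+1}\le e_t$. Step one: show that once the sequence stops strictly increasing it can never strictly increase again between two entries at or below the current level, because any strict rise $e_a<e_b$ with an earlier equality or descent would complete a copy of $001$ (if preceded by an equal pair) or $021$ (if preceded by a descent). Step two: show that the entries immediately following the increasing prefix must form a constant plateau equal to $e_t$ until some position $s\ge t$; any value strictly between a later one and $e_t$, or any rise within this region, produces a forbidden pattern. Step three: show that after the plateau ends (at position $s$, where $e_s=e_t$ and $e_{s+1}<e_s$) all remaining entries must equal $0$: if some later entry $e_k$ were positive, then together with the strictly smaller-then-larger configuration in the prefix it would create a $021$ occurrence, and if a subsequent rise occurred among the tail it would create a $001$. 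This yields exactly the asserted form $e_1<\cdots<e_t=\cdots=e_s>e_{s+1}=\cdots=e_n=0$.

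For the converse I would verify that every sequence of the displayed shape indeed avoids both patterns, which is routine: such a sequence is unimodal with a flat top and a zero tail, and one checks that no three positions can realize the relative order $001$ (which needs two equal small entries followed by a strictly larger one) nor $021$ (which needs a rise followed by a descent to a value strictly above the first). The main obstacle I anticipate is \textbf{case bookkeeping at the boundaries} between the three regions — in particular ruling out the possibility of a value that is positive but smaller than $e_t$ appearing in the tail, and handling the degenerate cases where the plateau or the increasing prefix is trivial (for instance $t=1$, or $s=t$, or the all-zero sequence). Getting the indices $t$ and $s$ well-defined and making sure the characterization captures exactly the boundary cases (rather than over- or under-counting) is where care is needed, but each individual exclusion follows from spotting the embedded $001$ or $021$ directly.
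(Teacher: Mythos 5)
Your characterization is the correct one and your plan does establish it, but note that the paper offers no argument at all here: this statement is one of three Observations given after the theorem, justified only by the remark that the theorem ``is clear from the three characterizations below.'' So your from-scratch structural analysis is a legitimate way to fill in an omitted routine proof rather than a rival to an existing one. Two points of comparison are worth making. First, the paper's own ingredients suggest a shorter route that avoids your anticipated bookkeeping entirely: intersect the two single-pattern characterizations quoted elsewhere in the paper, namely $e\in\I_n(001)$ if and only if $e_1<\cdots<e_t\geq e_{t+1}\geq\cdots\geq e_n$ for some $t$ (Section~\ref{sec:power}), and $e\in\I_n(021)$ if and only if the positive entries of $e$ are weakly increasing (Section~\ref{sec:4.1}); a weakly decreasing tail whose positive entries are simultaneously weakly increasing must be a constant positive block followed by zeros, which is exactly the asserted shape. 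Second, one mechanism in your step one is stated incorrectly: a strict rise preceded by a descent does not by itself complete a $021$ --- for instance $(0,1,0,1)$ has a descent followed by a rise and contains no $021$; the forbidden pattern it contains is $001$, formed by the two zeros. In a careful write-up the devices that actually do the work are the rise $(e_1,e_t)$ with $e_1=0$, which turns any later entry strictly between $0$ and $e_t$ into a $021$, and repeated zeros, which turn any later positive entry into a $001$; your step three invokes exactly these, so the slip is local and fixable rather than a structural flaw, but as written the justification in step one would not survive refereeing.
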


\begin{Observation}
An inversion sequence $e\in\I_n$  is $(001,120)$-avoiding if and only if  for some $t\geq1$ and $s\geq t$,
$$
e_1<e_2<\cdots<e_t=e_{t+1}=\cdots =e_s>e_{s+1}=e_{s+2}=\cdots=e_n =e_{t-1}.
$$
\end{Observation}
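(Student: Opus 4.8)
The plan is to establish the two implications separately, with the bulk of the work in the forward (``only if'') direction. For the ``if'' direction I would simply verify that a sequence of the displayed shape avoids both patterns: the only value exceeding $e_{t-1}$ is the maximum $M:=e_t$, attained exactly on the block $[t,s]$, while every value strictly below $e_{t-1}$ occurs only among the first $t-2$ (strictly increasing) entries. Hence if $e_i=e_j$ with $i<j$, then the repeated value is either $M$ (forcing $i,j\in[t,s]$, so nothing larger can follow) or $e_{t-1}$ (forcing $j\ge s+1$, so again nothing larger follows), which rules out $001$; and if $e_i<e_j$ with a later $e_k<e_i$, then $e_k<e_{t-1}$ puts $k$ among the first $t-2$ positions, forcing $i<j<k$ into the strictly increasing prefix and contradicting $e_k<e_i$, which rules out $120$.

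For the ``only if'' direction, let $t$ be the length of the maximal strictly increasing prefix $e_1<e_2<\cdots<e_t$. The key structural observation, special to inversion sequences, is that $e_1=0$ together with $e_i\le i-1$ forces $e_i=i-1$ for all $i\le t$; in particular $e_{t-1}$ and $e_t$ are consecutive integers and $M:=e_t=t-1$ is the running maximum of the prefix. I would then prove in turn: that $e_t$ is the global maximum; that the maximum is attained on a contiguous block $[t,s]$, where $s$ is its last occurrence; and that $e_k=e_{t-1}=M-1$ for every $k>s$. The last two steps are short: an intermediate value $e_r<M$ with $r\in(t,s)$, or a value $e_k\le M-2$ with $k>s$, produces a $120$ via the triple $(t-1,t,r)$ or $(t-1,t,k)$ when it drops strictly below $M-1$, and produces a $001$ via $(t-1,r,s)$ when it equals $M-1$; either way avoidance is contradicted.

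The main obstacle is ruling out any later entry $e_p>e_t$, and here the consecutiveness of $e_{t-1}=t-2$ and $e_t=t-1$ is exactly what makes the case analysis close. Since the prefix is maximal, either $e_t=e_{t+1}$ --- in which case the repeated value $e_t$ together with the larger $e_p$ yields a $001$ at $(t,t+1,p)$ --- or $e_t>e_{t+1}$, whence integrality forces $e_{t+1}\le e_{t-1}$; if $e_{t+1}=e_{t-1}$ then $(t-1,t+1,p)$ is a $001$, while if $e_{t+1}<e_{t-1}$ then $(t-1,t,t+1)$ is a $120$. In each branch one of the forbidden patterns appears, so no such $p$ exists and $e_t=M$. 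Finally I would dispose of the degenerate case $t=1$ (which forces $M=0$ and hence the all-zero sequence, matching the statement under the convention $e_0=0$), and observe that the structure just obtained is precisely the claimed shape.
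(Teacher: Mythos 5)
Your proof is correct. The paper offers no argument for this statement at all---it is one of three Observations declared ``clear'' and used directly to deduce the $\binom{n}{2}+1$ count---so your write-up is just a careful version of the direct verification the paper leaves to the reader: the key point, that a strictly increasing prefix of an inversion sequence must be $0,1,\ldots,t-1$ (so $e_{t-1}$ and $e_t$ are consecutive integers), is exactly what makes both directions routine, and you also correctly dispose of the edge case $t=1$, where the symbol $e_{t-1}$ in the statement is only meaningful under the convention $e_0=0$ (equivalently, by forcing $s=n$, the all-zero sequence).
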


\subsection{Class 34 and Fibonacci numbers}
\label{sec:fibo}

The {\em $n$-th Fibonacci number} $F_n$ can be defined by the recurrence $F_n=F_{n-1}+F_{n-2}$ for $n\geq2$ and the initial values $F_0=0$ and $F_1=1$. 

\begin{theorem}
For $n\geq 1$, we have $\I_n(000,001)=\I_n(=,\leq,-)$. Consequently,  the cardinality of $\I_n(000,001)$ is $F_{n+1}$. 
\end{theorem}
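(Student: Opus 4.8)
The plan is to split the statement into the set identity $\I_n(000,001)=\I_n(=,\leq,-)$ and the enumeration, and to treat the identity as the genuine starting point. The identity is a direct logical dissection: a triple $i<j<k$ is an occurrence forbidden by the relation pattern $(=,\leq,-)$ precisely when $e_i=e_j$ and $e_j\leq e_k$, the third relation ``$-$'' imposing nothing. Splitting $e_j\leq e_k$ into the two cases $e_j=e_k$ and $e_j<e_k$ turns this one forbidden configuration into $e_i=e_j=e_k$ and $e_i=e_j<e_k$, which are exactly the occurrences of the classical patterns $000$ and $001$. Hence the set of triples forbidden by $(=,\leq,-)$ is the union of the $000$- and $001$-occurrences, so $e$ lies in $\I_n(=,\leq,-)$ iff it avoids both $000$ and $001$, giving $\I_n(000,001)=\I_n(=,\leq,-)$. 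With this in hand the value $F_{n+1}$ may be quoted directly from the enumeration of $\I_n(=,\leq,-)$ in~\cite{ms}; this is the reading of ``Consequently'' in the statement.

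For a self-contained count I would instead prove a structure theorem. Reading the avoidance condition as ``whenever $e_i=e_j$ with $i<j$, then $e_k<e_j$ for all $k>j$'', let $t$ be the first index with $e_t\leq e_{t-1}$ (set $t=n+1$ if $e$ is strictly increasing). Since a strictly increasing inversion sequence is forced to be the staircase, $e_i=i-1$ for $1\leq i\leq t-1$. The claim is that the remaining entries form a strictly decreasing tail $e_t>e_{t+1}>\cdots>e_n$ with $e_t\leq t-2$. The key observation is that every value $w$ occurring in the tail is at most $t-2$, hence already occurs (once) in the staircase prefix; thus a tail occurrence of $w$ is a \emph{second} occurrence, and the avoidance condition forces all later entries to be $<w$. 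Applying this successively from position $t$ onward yields the strict decrease. Conversely, any staircase-plus-strictly-decreasing-tail sequence avoids $\{000,001\}$: an equal pair must consist of one staircase entry and one tail entry, and the strict decrease of the tail prevents any later entry from being $\geq$ that value.

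Finally I would count by the parameter $t$. For fixed $t$ the tail is a strictly decreasing sequence of length $n-t+1$ drawn from $\{0,1,\dots,t-2\}$, so the number of admissible tails is $\binom{t-1}{n-t+1}$. Summing over $t$ and substituting $k=n-t+1$ gives
\[
|\I_n(000,001)|=\sum_{t=2}^{n+1}\binom{t-1}{n-t+1}=\sum_{k\geq 0}\binom{n-k}{k}=F_{n+1},
\]
the last equality being the classical shallow-diagonal identity for Fibonacci numbers, which one can also confirm by checking that both sides satisfy the recurrence $a_n=a_{n-1}+a_{n-2}$.

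The set identity is immediate once the relation pattern is unpacked, so the only real content lies in the structure theorem; within it the decisive step is recognizing that any repeat inside the tail pairs with the unique earlier staircase copy of that value and thereby forces a strict drop. I expect this forcing argument to be the main obstacle — more precisely, the bookkeeping that every tail value is small enough to have already appeared in the staircase — after which both the characterization and the Fibonacci identity are routine. One should also check that the resulting tail remains a genuine inversion sequence, i.e. $0\leq e_{t+j}\leq t-2-j$, but this is automatic from the strict decrease.
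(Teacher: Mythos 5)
Your proposal is correct, and it contains two layers. The first paragraph is essentially the paper's entire proof: the paper simply declares $\I_n(000,001)=\I_n(=,\leq,-)$ to be obvious (your case split of $e_j\leq e_k$ into $e_j=e_k$ and $e_j<e_k$ is exactly the justification left implicit) and then cites Theorem~4 of Martinez--Savage for the value $F_{n+1}$. Where you genuinely diverge is in the self-contained enumeration: the paper never proves the Fibonacci count internally, whereas you establish a structure theorem --- every $(000,001)$-avoiding inversion sequence is a staircase prefix $e_i=i-1$ followed by a strictly decreasing tail bounded by $t-2$ --- and then count tails by $\binom{t-1}{n-t+1}$ and invoke the diagonal identity $\sum_{k\geq 0}\binom{n-k}{k}=F_{n+1}$. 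The forcing argument is sound: since $e_t\leq e_{t-1}=t-2$, the value $e_t$ already occurs in the staircase, so the avoidance condition (no $i<j<k$ with $e_i=e_j\leq e_k$) pushes every later entry strictly below $e_t$, and the same argument iterates down the tail; the converse direction and the inversion-sequence bound on the tail are also handled correctly. The trade-off is the usual one: the paper's route is two sentences but outsources the real content to~\cite{ms}, while your route costs a page but makes the theorem independent of external results and matches the characterization style the paper itself uses for the neighbouring class $(001,100)$ in Section~2.4.
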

\begin{proof}
It is obvious that $\I_n(000,001)=\I_n(=,\leq,-)$. The second statement then follows from~\cite[Theorem~4]{ms}.
\end{proof}

\subsection{Class 54: $F_{n+2}-1$}
\label{sec:54}

\begin{theorem}
For $n\geq 1$, we have $\I_n(001,100)=\I_n(\geq,\leq,\neq)$. Consequently,  the cardinality of $\I_n(001,100)$ is $F_{n+2}-1$. 
\end{theorem}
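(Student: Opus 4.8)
The plan is to prove the set identity $\I_n(001,100)=\I_n(\geq,\leq,\neq)$ first, after which the value $F_{n+2}-1$ is immediate from the enumeration of relation-triple classes in~\cite{ms}. The starting point is to translate the relation triple into ordinary length-$3$ patterns: a reduced word $p_1p_2p_3$ is forbidden by $(\geq,\leq,\neq)$ exactly when $p_1\geq p_2$, $p_2\leq p_3$ and $p_1\neq p_3$, and checking the thirteen reduced words of length $3$ shows these are precisely $001,100,102,201$. Hence $\I_n(\geq,\leq,\neq)=\I_n(001,100,102,201)$, so the inclusion $\I_n(\geq,\leq,\neq)\subseteq\I_n(001,100)$ is trivial. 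The whole content is therefore the reverse inclusion: every $(001,100)$-avoiding inversion sequence also avoids $102$ and $201$.

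I would obtain this reverse inclusion from an explicit structural description of $\I_n(001,100)$, which is where the inversion-sequence hypothesis becomes essential (note that the word $201$ itself avoids $001$ and $100$, so no purely pattern-theoretic implication can hold). Given $e\in\I_n(001,100)$, let $t$ be the length of the maximal strictly increasing prefix $e_1<e_2<\cdots<e_t$. Since $e_1=0$ and $0\leq e_i\leq i-1$, a strictly increasing prefix is forced to be $e_i=i-1$, so this prefix is exactly $0,1,\ldots,p$ with $p=t-1$; in particular every value of $\{0,1,\ldots,p\}$ already occurs in $e_1\cdots e_t$. I would then prove three facts in turn: (i) $p=\max_i e_i$; (ii) $e_t\geq e_{t+1}\geq\cdots\geq e_n$; and (iii) each value strictly below $p$ occurs at most once among $e_{t+1},\ldots,e_n$. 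For (i), if some later entry exceeded $p$, then taking a minimal such index $k$ (which satisfies $k\geq t+2$, as $e_{t+1}\leq e_t=p$) the first $k-1\geq t+1$ positions take values in the $t$-element set $\{0,\ldots,p\}$, so pigeonhole forces a repeated value $v\leq p<e_k$ at two positions before $k$, producing a $001$. For (ii), an ascent $e_a<e_b$ with $a>t$ would have its value $e_a$ already present in the prefix at position $e_a+1<a$, and this repetition followed by the larger $e_b$ is again a $001$. For (iii), two equal entries $v<p$ in the tail, preceded by the peak $e_t=p>v$, form a $100$.

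These three facts say that $e$ equals $0,1,\ldots,p$ followed by a weakly decreasing tail in which only the value $p$ may repeat. From this shape I would read off that $e$ avoids $102$ and $201$. Any occurrence of $201$ or $102$ contains a strict ascent (the pair $(j,k)$ with $e_j<e_k$ for $201$, the pair $(i,k)$ with $e_i<e_k$ for $102$); since the tail is non-increasing, the left endpoint of such an ascent must lie in the strictly increasing prefix. For $201$ this forces the remaining index $i$ into the prefix as well, giving $e_i<e_j<e_k$ and contradicting $e_i>e_k$; for $102$ it forces the remaining index $j$ into the non-increasing tail, giving $e_k\leq e_j<e_i$ and contradicting $e_k>e_i$. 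This yields $\I_n(001,100)\subseteq\I_n(001,100,102,201)=\I_n(\geq,\leq,\neq)$ and hence the desired equality, and the cardinality $F_{n+2}-1$ then follows from~\cite{ms}.

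The main obstacle is step (i): ruling out a later entry larger than the peak. It is the only place where the global combinatorics of the inversion sequence, rather than a local descent/ascent relation, is used, and the pigeonhole counting against the range bound $e_i\leq i-1$ is precisely what makes the implication specific to inversion sequences. Everything after the structure theorem is routine; as a sanity check, the structured sequences are counted by $\sum_{p=0}^{n-1}\sum_{s=0}^{\min(p,\,n-1-p)}\binom{p}{s}$, which equals $F_{n+2}-1$ and matches the entry $a_8=54$ recorded in Table~\ref{invseq:pairs}.
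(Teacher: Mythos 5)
Your proof is correct and rests on the same key fact as the paper's: a $(001,100)$-avoiding inversion sequence must have the unimodal shape (strictly increasing, then constant, then strictly decreasing), which the paper states as a bare observation and you prove in full via your facts (i)--(iii), with the pigeonhole step (i) correctly supplying the only place where the inversion-sequence bound $e_i\leq i-1$ is needed. The sole difference in execution is the bridge to $\I_n(\geq,\leq,\neq)$: the paper compares this shape against the characterization given inside the proof of~\cite[Theorem~13]{ms}, while you translate the relation triple into the pattern set $\{001,100,102,201\}$ and check directly that the shape excludes $102$ and $201$; both routes then quote~\cite{ms} for the count $F_{n+2}-1$.
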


\begin{proof}
Observe that $e\in\I_n$ is $(001,100)$-avoiding if and only if for some $t,s$ with $1\leq t\leq s\leq n$,
$$
e_1<e_2<\cdots <e_t=e_{t+1}=\cdots =e_s>e_{s+1}>e_{s+2}>\cdots >e_n.
$$
Comparing this with the characterization of $(\geq,\leq,\neq)$-avoiding inversion sequences in the proof of~\cite[Theorem~13]{ms}, we have $\I_n(001,100)=\I_n(\geq,\leq,\neq)$ and thus $|\I_n(001,100)|=F_{n+2}-1$.
\end{proof}

\subsection{Class 64 and Cake numbers}
\label{sec:cake}
The $n$-th {\em Cake number}, ${n\choose 3}+n$, is the maximal number of pieces resulting from $n-1$ planar cuts through a cake. The sequence of Cake numbers is registered as~\cite[A000125]{oeis} and has a number of geometric or combinatorial interpretations. We have the following  interpretation of Cake numbers as $(001,210)$-avoiding inversion sequences. 

\begin{theorem}
For any $n\geq1$, we have $|\I_n(001,210)|={n\choose 3}+n$.
\end{theorem}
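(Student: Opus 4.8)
The plan is to follow the template already established in this section: find an explicit structural characterization of $(001,210)$-avoiding inversion sequences, and then count them directly to obtain $\binom{n}{3}+n$. The other theorems in Section~2 all reduce to a single \emph{Observation} describing the shape of an admissible $e$, so I expect the same strategy to work here, with the counting being slightly more involved because the Cake number is cubic rather than linear or quadratic.

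First I would unpack what avoiding both $001$ and $210$ means locally. Avoiding $001$ says there is no $i<j<k$ with $e_i=e_j<e_k$; informally, once two equal entries appear, nothing strictly larger may occur later. Avoiding $210$ says there is no $i<j<k$ with $e_i>e_j>e_k$, i.e.\ $e$ contains no strictly decreasing subsequence of length $3$. I would first argue that a $001$-avoiding sequence has a very constrained ``ascending-then-bounded'' shape: reading left to right, the strict-ascent structure forces that after the first repeated value the sequence can never strictly exceed that value. Combined with the no-$210$ condition (at most one ``descent level,'' since three strictly decreasing values are forbidden), this should pin $e$ down to a normal form built from an initial strictly increasing run together with a tightly controlled tail. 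The key step is to write this normal form precisely as an \emph{Observation}, analogous to Observation~\ref{obs:lazy}, so that membership in $\I_n(001,210)$ becomes a purely combinatorial choice of a few parameters (the length of the increasing run, the positions where values repeat, and the location and value of the single permitted strict descent).

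Once the normal form is established, the enumeration reduces to counting parameter choices. I would organize the count by conditioning on the structural data identified above; I expect the total to split into a cubic term $\binom{n}{3}$, coming from choosing three break/repeat positions among the $n$ coordinates, plus a linear term $n$ absorbing the degenerate or boundary configurations (e.g.\ the purely weakly increasing sequences and the all-zero sequence). A clean way to present this is to set up a bijection between $\I_n(001,210)$ and a set whose size is visibly $\binom{n}{3}+n$, or to establish the recurrence $a_n=a_{n-1}+\binom{n-1}{2}+1$ that the formula $\binom{n}{3}+n$ satisfies, by examining how appending $e_n$ (subject to $0\le e_n\le n-1$ and the two avoidance constraints) extends a sequence in $\I_{n-1}(001,210)$.

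The main obstacle will be getting the structural characterization exactly right, because the interaction of the two patterns is more delicate than in the single-observation cases: the $210$-avoidance permits one strict drop but forbids a second, and I must be careful about whether entries after the drop may rise again or must stay weakly below the pre-drop plateau, since a subsequent rise could recreate a $001$ or the decrease could recreate a $210$. Pinning down precisely which tails are legal — and verifying that the normal form is both necessary and sufficient — is where the real work lies; the arithmetic producing $\binom{n}{3}+n$ should then be routine casework or a short induction.
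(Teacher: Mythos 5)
Your strategy coincides with the paper's: Section~2.5 proves the theorem by first observing that $e\in\I_n(001,210)$ if and only if, for some $t\geq 1$ and $t\leq s\leq n$,
\[
e_1<e_2<\cdots<e_t=e_{t+1}=\cdots=e_s>e_{s+1}=e_{s+2}=\cdots=e_n,
\]
and then counting directly: for fixed $t$ there are $(n-t)(t-1)+1$ such sequences (either $s=n$, or one chooses the drop position $s$ among $n-t$ options and the constant tail value among the $t-1$ values $0,1,\ldots,t-2$), and $\sum_{t=1}^{n}\bigl((n-t)(t-1)+1\bigr)=\binom{n}{3}+n$. Your anticipated split of the count --- a cubic term from the sequences with a drop and a linear term $n$ from the weakly increasing ones --- is exactly how this sum decomposes, so the counting half of your plan is sound.

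The genuine gap is that you stop at precisely the point you yourself flag as ``where the real work lies'': you never state the normal form, and you leave unresolved whether the tail may rise again after the drop. The idea that settles this is one your proposal does not mention: the strictly increasing prefix of an inversion sequence is \emph{forced}, since $e_1=0$ and $e_i\leq i-1$ give $e_i=i-1$ for all $i\leq t$, so every value in $\{0,1,\ldots,t-1\}$ already occurs in the prefix. Consequently, if $e$ drops from the plateau value $t-1$ to some $w<t-1$ and any later entry $u$ satisfies $u>w$, then the prefix entry $e_{w+1}=w$, the dropped entry $w$, and that later entry $u$ form a $001$ pattern; and a second strict descent after the drop would form a $210$ with the plateau top. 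These two facts are what force the constant tail (the same prefix argument also caps all post-plateau entries by $t-1$ even when the plateau is trivial, i.e.\ $s=t$). Without this observation the characterization --- and hence the enumeration --- is not established, so as written your proposal is an accurate plan that matches the paper's route but is not yet a proof.
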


\begin{proof}
Observe that $e\in\I_n$ is  $(001,210)$-avoiding if and only if for some $t\geq1$ and $t\leq s\leq n$,
$$
e_1<e_2<\cdots<e_t=e_{t+1}=\cdots =e_s>e_{s+1}=e_{s+2}=\cdots=e_n.
$$
It follows that for a fixed $t$, $1\leq t\leq n$, there are $(n-t)(t-1)+1$ many inversion sequences with the above form. Summing over  $t$ from $1$ to $n$ gives the desired result.
\end{proof}

\subsection{Classes 248(A,B,C,D,E,F): Powers of $2$}
\label{sec:power}
Simion and Schmidt~\cite{sim} showed that $|\S_n(132,231)|=2^{n-1}$. Corteel et al.~\cite{cor} showed that the coding $\Theta: \S_n\rightarrow \I_n$ restricts to a bijection from $\S_n(132,231)$ to $\I_n(001)$ and thus $|\I_n(001)|$ has cardinality $2^{n-1}$. Martinez and Savage~\cite[Section~2.6]{ms} proved that the three classes $\I_n(<,\leq, -)$, $\I_n(<,\geq,-)$ and $\I_n(\leq,=,-)$ also have cardinality $2^{n-1}$. In this section, we prove more interpretations for $2^{n-1}$.

\begin{theorem}
For any $n\geq1$ and a pattern pair 
$$p\in\{(000,011),(011,012),(001,101), (001,102),(001,201),(010,012)\},$$ we have $|\I_n(p)|=2^{n-1}$.
\end{theorem}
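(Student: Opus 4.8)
The plan is to prove $|\I_n(p)|=2^{n-1}$ for each of the six pattern pairs by the same structural strategy that underlies the preceding subsections: for each pair, find an explicit characterization of the avoiding inversion sequences, and show this characterization is counted by $2^{n-1}$. The cleanest route for several of these is to reduce to the already-established fact $|\I_n(001)|=2^{n-1}$. Indeed, three of the listed pairs, namely $(001,101)$, $(001,102)$ and $(001,201)$, are claimed earlier in the ``simple results'' list to satisfy $\I_n(001,101)=\I_n(001,102)=\I_n(001,201)=\I_n(001)$. So for those three, I would verify each set-theoretic equality: since $\I_n(001,q)\subseteq\I_n(001)$ trivially, the content is the reverse inclusion, i.e.\ that avoiding $001$ already forces avoidance of $101$ (resp.\ $102$, $201$). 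I expect this to follow because avoiding $001$ severely restricts where two equal entries can sit relative to a later larger entry, ruling out the second pattern automatically; I would isolate the occurrence $e_i\,e_j\,e_k$ of $101$/$102$/$201$ and extract from it a forced $001$ occurrence, giving a contradiction.

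For the remaining three pairs $(000,011)$, $(011,012)$ and $(010,012)$ I would argue directly via a shape characterization, mirroring the Observations in Sections~\ref{sec:n} and~\ref{sec:29}. For each pair I would first establish a normal form: for instance, for $(011,012)$ I expect avoidance to mean that once two equal entries appear, nothing strictly larger may follow, which should pin $e$ down to a weakly increasing prefix of distinct values followed by a flat tail (or a similar two-block description), and the paper already records $\I_n(011,012)=\I_n(<,\leq,-)$, so I can invoke the Martinez--Savage count for $(<,\leq,-)$. For $(000,011)$ the equality $\I_n(000,011)=\I_n(\leq,=,-)$ is likewise listed, and Martinez--Savage show $|\I_n(\leq,=,-)|=2^{n-1}$; so here the only real work is confirming the relation-triple translation, which amounts to checking that forbidding both $000$ and $011$ is exactly forbidding an $i<j<k$ with $e_i\le e_j$ and $e_j=e_k$.

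For the genuinely new pair $(010,012)$ I would give a bijective or recursive count from scratch. The approach is to describe the avoiding sequences by position of their entries: I would characterize $e\in\I_n(010,012)$ by a canonical shape (I anticipate something like: the sequence consists of a strictly increasing run followed by entries that are constrained to lie weakly below, with no ``valley then climb'' allowed), and then set up a recurrence on $n$ by examining the last entry $e_n$. Conditioning on whether $e_n$ equals the running maximum or drops, I expect a relation of the form $a_n=2a_{n-1}$ with $a_1=1$, which yields $2^{n-1}$; alternatively an explicit bijection onto subsets of $[n-1]$ (e.g.\ recording the set of ascent positions) would do the job in one stroke.

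The main obstacle is the single pair $(010,012)$: the other five all piggyback on established identities (either $\I_n(001)$ or a Martinez--Savage relation-triple result), so their proofs reduce to short inclusion or translation checks, whereas $(010,012)$ requires an independent structural analysis and a correct recurrence or bijection. The subtlety there is that $010$ and $012$ together forbid, after any descent, \emph{both} a return to an equal value and a climb to a larger one, so I must be careful to capture exactly which weakly-decreasing or constant tails are permitted; getting the shape characterization precisely right — and verifying it is neither too permissive nor too restrictive — is where the real care is needed before the $2^{n-1}$ count falls out.
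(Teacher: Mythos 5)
Your high-level architecture matches the paper's for most of the six pairs: $(000,011)$ and $(011,012)$ are indeed handled by the translations $\I_n(000,011)=\I_n(\leq,=,-)$ and $\I_n(011,012)=\I_n(<,\leq,-)$ plus the Martinez--Savage count, and the three pairs $(001,q)$ are settled by showing $\I_n(001,q)=\I_n(001)$. However, two of your proposed completions have genuine problems. First, for the $(001,q)$ pairs, your plan to ``isolate the occurrence $e_ie_je_k$ of $101$/$102$/$201$ and extract from it a forced $001$ occurrence'' cannot work locally: in $e=(0,0,2,1,2)$ the subsequence $e_3e_4e_5=(2,1,2)$ is an occurrence of $101$, yet every occurrence of $001$ in $e$ uses the two initial zeros, i.e.\ entries entirely outside the triple. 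The forced $001$ is a global phenomenon, not an extraction from the given occurrence. The clean fix is exactly the paper's: quote the characterization of Corteel et al.\ that $e\in\I_n(001)$ if and only if $e_1<\cdots<e_t\geq e_{t+1}\geq\cdots\geq e_n$ for some $t$; since each of $101$, $102$, $201$ contains a descent followed by a strict ascent, none can occur in such a sequence. (One can instead repair your direct argument with a pigeonhole count using the bound $e_m\leq m-1$, but the characterization route is shorter and is needed for the $2^{n-1}$ count anyway.)

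Second, and more seriously, for $(010,012)$ --- the one pair you yourself flag as requiring independent work --- both of your proposed finishes fail. Your shape characterization is correct: the members of $\I_n(010,012)$ other than the all-zero sequence are exactly those with $0=e_1=\cdots=e_t<e_{t+1}\geq e_{t+2}\geq\cdots\geq e_n>0$. But the recurrence $a_n=2a_{n-1}$ does not follow by conditioning on the last entry: the all-zero sequence of length $n-1$ admits $n$ valid extensions (append any $b$ with $0\leq b\leq n-1$), while a sequence whose last entry is the positive value $d$ admits exactly $d$ extensions (append $b$ with $1\leq b\leq d$; appending $0$ creates a $010$ with $e_1$ and the first positive entry, and appending $b>d$ creates a $012$), so the generating tree is far from binary even though the totals double. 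Likewise, recording ascent positions cannot biject onto subsets of $[n-1]$, since every sequence of this shape has at most one ascent. What does work from your characterization is a direct count: for fixed $t$ the admissible tails are the weakly decreasing sequences of length $n-t$ with values in $\{1,\ldots,t\}$, of which there are ${n-1\choose n-t}$, and $\sum_{t=1}^{n}{n-1\choose n-t}=2^{n-1}$, the term $t=n$ accounting for the all-zero sequence. The paper instead closes this case with an explicit bijection from $\I_n(011,012)$ onto $\I_n(010,012)$, replacing each zero after the leftmost positive entry by the nearest positive entry to its left; either route completes the proof.
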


\begin{proof}
Since $\I_n(000,011)=\I_n(\leq,=,-)$ and $\I_n(011,012)=\I_n(<,\leq,-)$, the result for $p$ equals $(000,011)$ or $(011,012)$ was proved in~\cite[Section~2.6]{ms}. 

It has already been characterized in~\cite{cor} that $e\in\I_n(001)$ if and only if for some $t\in[n]$,
$$
e_1<e_2<\cdots<e_t\geq e_{t+1}\geq e_{t+2}\geq\cdots\geq e_n.
$$
Thus, an inversion sequence $e$ avoids $001$ must necessarily avoid $101$, $102$ and $201$.  It follows that  $\I_n(p)=\I_n(001)$ for any $p\in\{(001,101), (001,102),(001,201)\}$  and the result for these three cases are true. 

For $p=(010,012)$, we observe that inversion sequences $e\neq(0,0,\ldots,0)$ in $\I_n(p)$ are those satisfying 
$$
e_1=e_2=\cdots=e_{t}< e_{t+1}\geq e_{t+2}\geq \cdots\geq e_n>0
$$
for some $t\in[n-1]$. On the other hand,  $(011,012)$-avoiding inversion sequences are those whose positive entries are decreasing. For any $e\in\I_n(011,012)$ with $e_t$ as the leftmost positive entry (if any), define $e'=(0,\ldots,0,e_t',\ldots,e_n')$ where for $t\leq i\leq n$, $e_i'$ equals the nearest positive entry to the left of $e_i$ if $e_i=0$; otherwise $e_i'=e_i$.  For instance, if $e=(0,0,0,3,0,2,0,0,1)\in\I_9(011,012)$, then $e'=(0,0,0,3,3,2,2,2,1)$. It is routine to check that the mapping $e\mapsto e'$ sets up a bijection between $\I_n(011,012)$ and $\I_n(010,012)$. This completes the poof of the theorem. 
\end{proof}

\subsection{Classes 248(A,B): $2^n-n$}
 \label{sec:248}
 Permutations with at most one descent are known as {\em Grassmannian permutations} after Lascoux and Sch\"utzenberger~\cite{ls}. Grassmannian permutations of length $n$ are enumerated by $2^n-n$, which is also the cardinality of three equivalence classes of  relation triples for inversion sequences as shown in~\cite{ms}. We relate $2^n-n$  to two pairs of patterns in inversion sequences. 
 
 \begin{theorem}
For any $n\geq1$ and a pattern pair $p\in\{(012,021),(110,012)\}$, we have $|\I_n(p)|=2^n-n$.
\end{theorem}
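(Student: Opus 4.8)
The goal is to show that for $p\in\{(012,021),(110,012)\}$ we have $|\I_n(p)|=2^n-n$. Following the pattern of the preceding theorems in this section, the natural strategy is to give an explicit structural characterization of the inversion sequences in each class, and then count the resulting configurations directly. I would treat the two patterns separately, since the forbidden subwords differ and the resulting shapes are not obviously the same, although both must land on $2^n-n$.

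\textbf{The case $p=(012,021)$.} First I would characterize $\I_n(012,021)$. Avoiding $012$ forbids any strictly increasing subsequence of length three, and avoiding $021$ forbids a subword order-isomorphic to $021$. Jointly these say that once the sequence has used two ascending values $e_i<e_j$, no later entry $e_k$ may lie strictly above $e_i$ unless it equals one of the already-seen larger values — more precisely, both patterns force that there is at most one ``ascent'' to a new high value. I expect the clean characterization to be: $e\in\I_n(012,021)$ if and only if the entries take at most two distinct values apart from $0$, arranged so that the sequence is, up to the positions of $0$'s, weakly controlled. I would nail this down by induction on the position of the first strict ascent and then count: the count $2^n-n$ strongly suggests a bijection with subsets of $[n]$ minus a linear correction, so I would aim to encode each such $e$ by the set of positions where $e_i$ exceeds its predecessor (or a similar ascent/descent indicator), showing that all $2^n$ binary choices are realizable except for $n$ degenerate ones.

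\textbf{The case $p=(110,012)$.} Here avoiding $012$ again forbids a length-three strict increase, while avoiding $110$ forbids a subword $a\,a\,b$ with $a>b$, i.e.\ a repeated value immediately (non-consecutively) followed by a strictly smaller value. I would again seek a normal form; the combination should force that every value appearing at least twice is never later exceeded downward, which together with the $012$ restriction pins the sequence into a shape counted by $2^n-n$. The cleanest route is probably to exhibit a bijection between $\I_n(110,012)$ and $\I_n(012,021)$ directly (both being Grassmannian-type classes of size $2^n-n$), so that I only have to count one class and transfer. Alternatively I would count each independently and check the totals agree with the Grassmannian count $2^n-n$ cited from~\cite{ms}.

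\textbf{Main obstacle.} The genuinely delicate step is the structural characterization for $(110,012)$: unlike $(012,021)$, the pattern $110$ involves a repeated entry, so the avoidance condition interacts with equalities rather than strict inequalities, and it is easy to mis-state the normal form by overlooking borderline cases where zeros or repeated maximal values appear. I expect most of the effort to go into proving that the candidate normal form is both necessary and sufficient for avoidance, and then verifying that the induced counting (summing over the position or multiplicity of the repeated block) telescopes exactly to $2^n-n$ rather than to a nearby value such as $2^n-n+1$ or $2^{n-1}$. The permutation-statistics toolkit recalled earlier (ascents, distinct entries) should make the bijective reformulation, and hence the count, routine once the characterization is correct.
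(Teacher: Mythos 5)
Your plan has a genuine gap in both cases, and in the first case the key structural claim is wrong. For $(012,021)$: since every inversion sequence starts with $e_1=0$, any two \emph{distinct} positive values $a<b$ occurring in $e$ form either a $012$ (if $a$ occurs first) or a $021$ (if $b$ occurs first) together with that leading zero. So the correct normal form allows at most \emph{one} distinct positive value, not ``at most two distinct values apart from $0$'' as you state: $e\in\I_n(012,021)$ if and only if $0=e_1=\cdots=e_{t-1}<e_t$ for some $t$ and every later entry is $0$ or $e_t$. Once this is in hand, the count is not a clean ``subsets of $[n]$ minus $n$ degenerate choices'' bijection but a summation: for fixed $t\geq2$ there are $(t-1)2^{n-t}$ such sequences ($t-1$ choices for the value $e_t$, a binary choice at each later position), and $1+\sum_{t=2}^n(t-1)2^{n-t}=2^n-n$. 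Your proposed ascent-set encoding cannot work as stated, because the data is not purely positional: the value $e_t$ ranges over $t-1$ possibilities, so the fibers over position-sets are not uniform.

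For $(110,012)$ you offer two routes, but neither is carried out, and the one you call ``cleanest'' --- a direct bijection to $\I_n(012,021)$ --- is left entirely unconstructed; it is not obvious, since the two normal forms are quite different. (The correct characterization here: positive entries are weakly decreasing, by $012$-avoidance against the leading zero, and once a positive value occurs twice, $110$-avoidance forces all subsequent entries to equal that value.) The paper instead proves the recurrence $a_n-a_{n-1}=2^{n-1}-1$: appending to each $e\in\I_{n-1}(110,012)$ its least positive entry (or $0$ if there is none) accounts for all of $\I_n(110,012)$ except the sequences whose positive entries are strictly decreasing, and those are counted by $\sum_{t=2}^n\sum_{k\geq1}\binom{t-1}{k}\binom{n-t}{k-1}=\sum_{t=2}^n\binom{n-1}{t-2}=2^{n-1}-1$ via the Vandermonde identity. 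So the delicate point you correctly anticipate (the interaction of $110$ with repeated values) is real, but your proposal stops exactly where the work begins: one characterization is mis-stated, the other is not pinned down, and no counting argument is completed in either case.
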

\begin{proof}
{\bf The case of $p=(012,021)$}: Observe that $e\in \I_n(012,021)$ are those satisfying for some $t\in[n]$, 
$$
0=e_1=e_2=\cdots e_{t-1}<e_t\text{ and  either $e_j=0$ or $e_j=e_t$ when $t+1\leq j\leq n$}.
$$
The number of inversion sequences satisfying the above condition is $(t-1)2^{n-t}$ for $t\geq2$. Thus, 
$$
|\I_n(p)|=1+\sum\limits_{t=2}^n(t-1)2^{n-t}=1+2^{n-2}\sum_{k=1}^{n-1}k\biggl(\frac{1}{2}\biggr)^{k-1}=2^n-n,
$$
where the last equality follows from 
\begin{equation*}
\sum_{k=1}^{n-1}kx^{k-1}=\biggl(\sum_{k=1}^{n-1}x^k\biggr)'=\frac{nx^{n-1}(x-1)+1-x^n}{(1-x)^2}.
\end{equation*}

{\bf The case of $p=(110,012)$}: It is sufficient to show that for $n\geq2$,
$$
a_n-a_{n-1}=2^{n-1}-1,
$$
where $a_n=|\I_n(p)|$. Note that $\I_n(110,012)$ consists of inversion sequences $e\in\I_n$ whose  positive entries are weakly decreasing and whenever $e_i=e_j=\ell>0$ for some $i<j$, then $e_{j+1}=e_{j+2}=\cdots=e_n=\ell$. For $e\in\I_{n-1}(110,012)$, let $e'=(e_1,e_2,\ldots,e_{n-1},a)\in\I_{n}(110,012)$, where $a$ equals the least positive entry  of $e$, if $e\neq(0,0,\ldots,0)$; and $0$, otherwise. It is clear that the mapping $e\mapsto e'$ is a one-to-one correspondence between $\I_{n-1}(110,012)$ and the set $\I_n(110,012)\setminus\widetilde{\I}_n(110,012)$, where $\widetilde{\I}_n(110,012)$  consists of inversion sequences $e\in\I_n\setminus\{(0,0,\ldots,0)\}$ whose  positive entries are  decreasing. So it remains to show that $|\widetilde{\I}_n(110,012)|=2^{n-1}-1$. Since for any fixed $t$ and $k$, $2\leq t\leq n$ and $1\leq k$, the number of inversion sequences $e\in\widetilde{\I}_n(110,012)$ with $\min\{i:e_i>0\}=t$ and $|\{i:e_i>0\}|=k$ is ${t-1\choose k}{n-t\choose k-1}$, we have 
\begin{align*}
|\widetilde{\I}_n(110,012)|=\sum_{t=2}^n\sum_{k\geq1}{t-1\choose k}{n-t\choose k-1}=\sum_{t=2}^n{n-1\choose t-2}=2^{n-1}-1,
\end{align*}
where the second equality follows from the Vandermonde identity. This completes the proof of the theorem. 
\end{proof}

\subsection{Classes 441(A,B): $(321,2143)$-avoiding permutations}
\label{sec:411}
 The $2143$-avoiding permutations are called  {\em vexillary permutations}~\cite{ls} and the $321$-avoiding vexillary permutations are related to the combinatorics of Schubert polynomials by the work of Billey, Jockush and Stanley~\cite{bjs}. It was shown that 
 $$
 |\S_n(321,2143)|=2^{n+1}-{n+1\choose 3}-2n-1,
 $$
 the sequence A088921 in the OEIS. In~\cite[Sec.~2.9]{ms}, Martinez and Savage showed that $(\neq,<,\neq)$-avoiding inversion sequences are counted by the same function as $321$-avoiding vexillary permutations. Regarding pairs of patterns in inversion sequences, there has the following enumerative result.
  
 \begin{theorem}
For any $n\geq1$ and a pattern pair $p\in\{(012,201),(012,210)\}$, we have $|\I_n(p)|=2^{n+1}-{n+1\choose 3}-2n-1$.
\end{theorem}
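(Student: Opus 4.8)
The plan is to treat the two pairs separately, reducing $(012,201)$ to an already-known count and enumerating $(012,210)$ by a direct structural analysis.

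For $p=(012,201)$ I would simply invoke the equality $\I_n(012,201)=\I_n(\neq,<,\neq)$ listed among the elementary reductions of this section, together with the result of Martinez and Savage~\cite[Sec.~2.9]{ms}, which gives $|\I_n(\neq,<,\neq)|=2^{n+1}-{n+1\choose 3}-2n-1$. The equality itself is transparent: avoidance of $(\neq,<,\neq)$ forbids exactly the patterns $012$, $102$ and $201$, but since $e_1=0$, any occurrence of $102$ at positions $i<j<k$ forces $e_i>0$, hence $1<i<k$ and $e_1<e_i<e_k$ is already a $012$; so avoiding $012$ and $201$ is equivalent to avoiding all three. Thus the real content is the case $p=(012,210)$.

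For $(012,210)$ I would first extract the shape of such sequences. Because $e_1=0$, any positive entry $e_j$ with $j\ge 2$ forms an ascent with $e_1$, so $012$-avoidance forces $e_k\le e_j$ for every $k>j$; consequently the positive entries of any $e\in\I_n(012)$, read left to right, are weakly decreasing. Imposing $210$-avoidance on top, I would argue that at most two distinct positive values can occur, since three distinct positive values $w_1>w_2>w_3$ appearing in this (weakly decreasing) order already give a $210$. When exactly two positive values $w_1>w_2$ occur, a zero placed after the first $w_2$ would create the $210$ pattern $w_1>w_2>0$, so all zeros must precede the first occurrence of $w_2$, and every entry from that occurrence onward equals $w_2$. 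This yields three explicit families according to the number $d$ of distinct positive values: $d=0$ gives only $(0,\dots,0)$; $d=1$ gives any $\{0,w\}$-valued sequence with at least one $w$; and $d=2$ gives a $\{0,w_1\}$-valued prefix (with at least one $w_1$) followed by a constant block of $w_2$'s.

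Finally I would count each family, keeping track of the inversion constraint $e_i\le i-1$, i.e.\ a value $v$ may occur only at positions $\ge v+1$. The family $d=0$ contributes $1$. For $d=1$, fixing $w$ leaves positions $w+1,\dots,n$ free in $\{0,w\}$, so the total is $\sum_{w\ge1}(2^{n-w}-1)=2^n-n-1$. For $d=2$, if the constant $w_2$-block starts at position $s$, the prefix contributes $2^{\,s-1-w_1}-1$ choices while $w_2$ ranges freely over $1,\dots,w_1-1$; summing over $s$ and $w_2$ and then over $w_1$ reduces the count to $\sum_{k\ge 2}(k-1)\bigl(2^{n-k}-n-1+k\bigr)$, whose routine evaluation equals $2^n-{n+1\choose 3}-n-1$. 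Adding the three contributions gives $2^{n+1}-{n+1\choose 3}-2n-1$, as claimed. I expect the main obstacle to be precisely the $d=2$ enumeration: getting the position bounds right so that the inversion constraint and the condition ``first $w_2$ at position $s$'' are compatible, and then carrying the resulting multiple sum through to the advertised closed form.
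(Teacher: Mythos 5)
Your proposal is correct and takes essentially the same route as the paper: for $(012,201)$ you use the identical reduction $\I_n(012,201)=\I_n(\neq,<,\neq)$ (via the observation that an occurrence of $102$ forces one of $012$) plus the Martinez--Savage count, and for $(012,210)$ you derive the same structural characterization (weakly decreasing positive entries, at most two distinct positive values, constant tail from the first occurrence of the smaller one) followed by a direct count. The only difference is bookkeeping: the paper parametrizes the sum by the position of the leftmost positive entry, giving terms $(t-1)2^{n-t}+\binom{t-1}{2}(2^{n-t}-1)$, while you parametrize by the values $w_1,w_2$ and the block-start position $s$; this is a transposed summation that evaluates to the same closed form.
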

\begin{proof}
Since $\I_n(\neq,<,\neq)=\I_n(102,012,201)$ and every inversion sequence contains the pattern $102$ must also contain $012$, we have $\I_n(012,201)=\I_n(\neq,<,\neq)$ and so $|\I_n(012,201)|=2^{n+1}-{n+1\choose 3}-2n-1$. 

Observe that a sequence  $e\in\I_n$ is $(012,210)$-avoiding if and only if
\begin{itemize}
\item positive entries of $e$ are weakly decreasing and
\item $\dist(e)\leq 2$, and if $\dist(e)=2$ with $e_{\ell}$ as the leftmost least positive entry, then $e_{\ell}=e_{\ell+1}=\cdots=e_n$.  
\end{itemize}
Let $\I_{n,t}(012,210)$ be the set of all inversion sequences $e\in\I_n(012,210)$ with $e_t$ as the leftmost positive entry. 
Recall that $\dist(e)$ denotes the number of distinct positive entries of $e$. The cardinality of $\{e\in\I_{n,t}(012,210): \dist(e)=1\}$ is easily seen to be $(t-1)2^{n-t}$, while 
$$
|\{e\in\I_{n,t}(012,210): \dist(e)=2\}|=\sum_{\ell=1}^{n-t}{t-1\choose 2}2^{n-t-\ell}={t-1\choose 2}(2^{n-t}-1),
$$
where each term ${t-1\choose 2}2^{n-t-\ell}$ counts the number of $e\in\I_{n,t}(012,210)$ with $\dist(e)=2$ whose leftmost least positive entry is $e_{n-\ell-1}$. It then follows that 
$$
|\I_{n}(012,210)|=1+\sum_{t=2}^n\biggl((t-1)2^{n-t}+{t-1\choose 2}(2^{n-t}-1)\biggr),
$$
which can be simplified to $2^{n+1}-{n+1\choose 3}-2n-1$. This completes the proof of the theorem. 
\end{proof}

\subsection{Class 733: $\sum_{k=0}^{n-1}(n-k)^k$}
\label{sec:733}

\begin{theorem}
For $n\geq 1$, $|\I_n(010,011)|=\sum_{k=0}^{n-1}(n-k)^k$. 
\end{theorem}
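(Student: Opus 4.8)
The plan is to reduce the theorem to a clean structural description of the avoidance class and then count directly. I claim that $e\in\I_n$ avoids both $010$ and $011$ precisely when there is an $r$ with $1\le r\le n$ such that $e_1=\cdots=e_r=0$ while $e_{r+1},\ldots,e_n$ are positive and pairwise distinct. To prove the forward direction I would first use $011$-avoidance alone to show that no positive value repeats: if $v>0$ occurred at positions $p<q$, then since $e_1=0<v$ and $p\ge 2$, the indices $1<p<q$ would give an occurrence of $011$. With distinctness of the positive entries in hand, I would then use $010$-avoidance to force the zeros into a prefix: setting $r=\max\{i:e_i=0\}$, a positive entry at any position strictly between $1$ and $r$ would, together with the zeros at positions $1$ and $r$, form a $010$ (the two equal outer letters of any $010$ must now be zeros, since positive values no longer repeat). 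Hence positions $1,\ldots,r$ are all zero and, by maximality of $r$, positions $r+1,\ldots,n$ are all positive.

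The reverse direction is a quick verification: a sequence of the stated shape admits no $011$ (an equality $e_j=e_k$ with $e_i<e_j$ would require a repeated positive value) and no $010$ (its two equal outer zeros would force the middle index into the all-zero prefix as well). The counting is then the heart of the argument. Fixing the prefix length $r$, I must enumerate the assignments of distinct positive values to the $n-r$ positions $r+1,\ldots,n$ under the inversion-sequence bound $1\le e_i\le i-1$. Processing these positions from left to right, I would show that at the $m$-th step, position $r+m$, there are \emph{exactly} $r$ admissible values no matter what was chosen earlier: the value must lie in $\{1,\ldots,r+m-1\}$, the $m-1$ earlier values are distinct and all at most $r+m-2$, hence all inside this range, so exactly $m-1$ values are excluded and $(r+m-1)-(m-1)=r$ remain. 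Multiplying over $m=1,\ldots,n-r$ yields $r^{\,n-r}$ sequences for each $r$, and summing gives $|\I_n(010,011)|=\sum_{r=1}^{n}r^{\,n-r}=\sum_{k=0}^{n-1}(n-k)^k$ via the substitution $k=n-r$.

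The main obstacle is the uniform ``exactly $r$ choices at every step'' phenomenon in the counting: it is the only point that is not purely routine, and it is what collapses the product into a clean power. It works because the injectivity constraint removes candidates at exactly the rate at which the bound $i-1$ loosens, so the per-step count is insensitive to the earlier choices. By contrast, the structural characterization is essentially forced once one notices that $011$-avoidance already prohibits repeated positive entries.
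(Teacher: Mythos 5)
Your proposal is correct and follows essentially the same route as the paper: both characterize $\I_n(010,011)$ as sequences consisting of an all-zero prefix of length $r$ followed by pairwise distinct positive entries, then count $r^{\,n-r}$ choices for each $r$ and sum. Your write-up merely makes explicit the details (the ``exactly $r$ choices at every step'' computation and the verification of the characterization) that the paper states as observations.
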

\begin{proof}
Note that $\I_n(010,011)$ is the set of $e\in \I_n$ satisfying for some $t\in [n]$, 
$$0=e_1=e_2=\cdots=e_t<e_{t+1}\text{ and $e_i\neq e_j$ for $t\leq i<j\leq n$}.
$$ 
So for fixed $t\in[n]$, the number of inversion sequences satisfying the above condition is $t^{n-t}$. The result then follows. 
\end{proof}

\subsection{Classes 1430(A,B) and Catalan numbers}
\label{sec:cat}
Recall that a {\em Dyck path} of length  $n$ is a path in $\N^2$ from $(0,0)$ to $(n,n)$ using the east step $(1,0)$ and the north step $(0,1)$, which does not pass above the diagonal $y=x$. 
It is well known that the $n$-th {\em Catalan number} $C_n=\frac{1}{n+1}{2n\choose n}$ counts the length $n$ Dyck paths. One can represent a Dyck path of length $n$ by a weakly increasing inversion sequence $e$ with $e_i$ as the height of its $i$-th east step. 
 \begin{theorem}
For any $n\geq1$ and a pattern pair $p\in\{(011,021),(010,021)\}$, we have $|\I_n(p)|=C_n$.
\end{theorem}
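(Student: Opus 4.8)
The plan is to reduce both cases to counting weakly increasing inversion sequences of length $n$, which correspond to Dyck paths and are therefore enumerated by $C_n$, exactly as recalled before the statement. The starting point is to rewrite each pair of forbidden patterns as a single forbidden configuration. Since a sequence avoids both patterns of a pair precisely when it contains no triple $i<j<k$ realizing either one, I would first check that $e\in\I_n(010,021)$ iff there is no $i<j<k$ with $e_i\le e_k<e_j$, and that $e\in\I_n(011,021)$ iff there is no $i<j<k$ with $e_i<e_k\le e_j$. Both identities come from merging the two inequality constraints: an $011$ occurrence means $e_i<e_j=e_k$ and a $021$ occurrence means $e_i<e_k<e_j$, which together give $e_i<e_k\le e_j$; the analogous merge handles the other pair.

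For $p=(010,021)$ the claim is that $\I_n(010,021)$ is exactly the set of weakly increasing inversion sequences. The key leverage is that $e_1=0$ is the global minimum of any inversion sequence. If $e$ were not weakly increasing, pick $a<b$ with $e_a>e_b$; then $e_a\ge 1>0=e_1$ forces $a\ge 2$, so the triple $(1,a,b)$ satisfies $e_1\le e_b<e_a$, a forbidden configuration. Hence $e$ must be weakly increasing, and conversely every weakly increasing sequence trivially avoids the configuration. This identifies $\I_n(010,021)$ with the weakly increasing inversion sequences, of which there are $C_n$.

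For $p=(011,021)$ the same use of $e_1=0$ yields a different but equally clean structure: whenever $e_k>0$, taking $i=1$ in the forbidden configuration $e_i<e_k\le e_j$ shows that no $j$ with $1<j<k$ can have $e_j\ge e_k$, so in fact $e_k>e_i$ for every $i<k$. Thus $e\in\I_n(011,021)$ iff every positive entry is a strict left-to-right maximum, i.e.\ each entry is either $0$ or strictly larger than all preceding entries. To enumerate these I would build an explicit bijection with weakly increasing inversion sequences: send $e$ to its running-maximum sequence $f_k:=\max(e_1,\dots,e_k)$, which is weakly increasing and still satisfies $f_k\le k-1$, hence is again an inversion sequence. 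The inverse reconstructs $e$ by setting $e_k:=f_k$ at the positions where $f$ strictly increases and $e_k:=0$ elsewhere; verifying that this reconstruction is well defined and mutually inverse (using the left-to-right-maximum characterization) gives $|\I_n(011,021)|=C_n$.

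The main obstacle I anticipate is the $(011,021)$ case: unlike $(010,021)$, this class is genuinely different from the weakly increasing sequences (for instance $(0,1,1)$ is excluded), so the count requires the running-maximum bijection, and the care lies in checking that recovering the $0$-entries from $f$ returns exactly the original sequence.
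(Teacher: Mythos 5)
Your proof is correct and follows essentially the same route as the paper: you identify $\I_n(010,021)$ with the weakly increasing inversion sequences, and your running-maximum map $e\mapsto f$ with $f_k=\max(e_1,\dots,e_k)$ is exactly the inverse of the paper's bijection, which keeps the first occurrence of each positive value of a weakly increasing sequence and zeroes out the rest. The only difference is that you spell out the details (the merged single-configuration characterizations and the verification that the two maps are mutually inverse) that the paper leaves as assertions.
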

\begin{proof}
Note that $\I_n(010,021)$ is exactly the set of weakly increasing inversion sequences of length $n$ and so $|\I_n(010,021)|=C_n$. The mapping $e\mapsto e'$, where $e'$ is constructed from $e$ by replacing each positive entry by $0$ except for all the first occurrences  of positive values, is a bijection between $\I_n(010,021)$ and $\I_n(011,021)$. The proof is complete. 
\end{proof}

\subsection{Classes 8558(A,B) and Schr\"oder numbers}
\label{sec:sch}
A {\em Schr\"oder $n$-path} is a path in $\N^2$ from $(0,0)$ to $(2n,0)$ using only the steps $(1,1)$ (up step), $(1,-1)$ (down step) and (2,0) (flat step).
The $n$-th Large Schr\"oder number $S_n$ counts the Schr\"oder $n$-paths, as well as the $(2413,4213)$-avoiding permutations known as {\em Separable permutations}. One of the most remarkable results (see~\cite{cor,ms,kl,kl2,mash}) in the enumeration of pattern avoiding inversion sequences is 
$$
|\I_n(021)|=S_n. 
$$
Since every inversion sequence contains a pattern $210$ or $201$ must also contain the pattern $021$, we have $\I_n(p)=\I_n(021)$ for $p\in\{(021,210),(021,201)\}$ and consequently  the following result holds. 
 \begin{theorem}
For any $n\geq1$ and a pattern pair $p\in\{(021,210),(021,201)\}$, we have $|\I_n(p)|=S_n$.
\end{theorem}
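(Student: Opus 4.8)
The plan is to reduce both pattern pairs to the single-pattern class $\I_n(021)$, whose cardinality $|\I_n(021)|=S_n$ is already known from the work cited in this section. The key observation is that the pattern $021$ embeds into each of the length-$3$ patterns $210$ and $201$ in the sense of \emph{pattern containment}: whenever an inversion sequence $e$ contains an occurrence of $210$ or $201$, it must already contain an occurrence of $021$.

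First I would verify this containment claim directly. Suppose $e\in\I_n$ contains $210$, so there exist indices $i<j<k$ with $e_i>e_j>e_k$. Because the entries of an inversion sequence are nonnegative and start from $0$ (indeed $e_1=0$), the values $e_j$ and $e_k$ together with some smaller entry to their left exhibit the relative order of $021$; more concretely, one uses the leading zero $e_1=0$ as the ``$0$'' and the pair $e_j>e_k$ as the ``$21$,'' giving the subword pattern $021$. The same argument handles $201$: from $e_i>e_k>e_j$ (with $i<j<k$) one again locates an occurrence of $021$ using $e_1=0$ as the initial small entry and the descent among later entries. I would state this as a short lemma: \emph{every inversion sequence containing $210$ or $201$ also contains $021$.}

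The containment immediately yields the set equalities. For $p=(021,210)$, an inversion sequence avoids both $021$ and $210$ iff it avoids $021$ alone, since avoiding $021$ already forces avoidance of $210$ by the contrapositive of the lemma; hence $\I_n(021,210)=\I_n(021)$. The identical reasoning gives $\I_n(021,201)=\I_n(021)$. Combining these with $|\I_n(021)|=S_n$ completes the proof for both members of $\{(021,210),(021,201)\}$.

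I do not expect a genuine obstacle here; the result is essentially a bookkeeping consequence of pattern containment together with the established Schröder enumeration of $\I_n(021)$. The only point requiring a modicum of care is making the containment argument watertight for $201$: unlike $210$, the pattern $201$ has its maximal entry first, so one must ensure that the witnessing occurrence of $021$ can indeed be extracted using an earlier small entry (the forced $e_1=0$ suffices). Once that verification is in place, the theorem follows with no further computation.
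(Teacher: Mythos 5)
Your proposal is correct and is essentially the paper's own proof: the paper simply asserts that any inversion sequence containing $210$ or $201$ must also contain $021$, concludes $\I_n(p)=\I_n(021)$ for both pairs, and invokes $|\I_n(021)|=S_n$. One small repair to your witnesses: for a $210$-occurrence $e_i>e_j>e_k$ your choice of $e_1$ with the pair $e_j>e_k$ fails when $e_k=0$ (that triple is a $010$, not a $021$), so instead use the two larger entries, which are necessarily positive (forcing $i\geq 2$), to get the $021$-occurrence $e_1e_ie_j$; similarly $e_1e_ie_k$ works for a $201$-occurrence $e_i>e_k>e_j$.
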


\section{Classes 610(A,B,C): Boolean permutations}
\label{sec:610}
 The bisection of the Fibonacci sequence 
 $$
 \{F_{2n-1}\}_{n\geq1}=\{1, 2, 5, 13, 34, 89, 233, 610, 1597,\ldots\}
 $$
 appears as A001519 in~\cite{oeis} and has many combinatorial interpretations, among which is the  number of $(321,3412)$-avoiding permutations known as {\em Boolean permutations}. Note that $a_n=F_{2n-1}$ satisfies the recurrence
 $$
 a_n=3a_{n-1}-a_{n-2}.
 $$
 Corteel et al.~\cite{cor} showed that $|\I_n(012)|$ shares the same recurrence relation above as $a_n$ and thus proving
 \begin{equation}\label{bis:fib}
 |\I_n(012)|=F_{2n-1}. 
 \end{equation}
 The following result provides more interpretations of $a_n$ in terms of pattern avoiding inversion sequences. 
  \begin{theorem}
For any $n\geq1$ and a pattern pair $p\in\{(012,102),(012,120),(011,102)\}$, we have $|\I_n(p)|=F_{2n-1}$.
\end{theorem}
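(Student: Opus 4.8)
The plan is to treat the three pairs separately, reducing the two that contain $012$ to the identity \eqref{bis:fib}. First I would observe that avoidance of $012$ already forces avoidance of both $102$ and $120$. Indeed, suppose $e$ contained a $102$ occurrence $e_j<e_i<e_k$ (respectively a $120$ occurrence $e_k<e_i<e_j$) at positions $i<j<k$. The middle-valued entry $e_i$ is then strictly positive, so $i\geq2$ and in particular $1<i$; combining the initial entry $e_1=0$ with $e_i$ and with the larger of the two remaining entries gives a strictly increasing triple, i.e. a $012$. Hence $\I_n(012)\subseteq\I_n(102)$ and $\I_n(012)\subseteq\I_n(120)$, so that $\I_n(012,102)=\I_n(012,120)=\I_n(012)$, and \eqref{bis:fib} yields $|\I_n(012,102)|=|\I_n(012,120)|=F_{2n-1}$ at once.

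The genuinely new case is $p=(011,102)$, where $\I_n(011,102)$ is \emph{not} contained in $\I_n(012)$ (for instance $(0,0,1,2)$ avoids $011$ and $102$ but contains $012$). Here my plan is to show that $a_n:=|\I_n(011,102)|$ obeys the same recurrence $a_n=3a_{n-1}-a_{n-2}$ as $|\I_n(012)|$. I would set up a generating tree by appending a last entry $e_n\in\{0,1,\dots,n-1\}$ to an element of $\I_{n-1}(011,102)$, which is the natural reduction since deleting the last entry can never create a pattern. The crux is that the number of admissible values for $e_n$ is not constant: appending $x$ creates an $011$ precisely when $x$ already occurs at a position having a strictly smaller entry to its left, and it creates a $102$ precisely when $x$ exceeds some entry that has a strictly smaller entry somewhere to its right. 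Thus the number of children of a node depends on it through two auxiliary quantities, namely the set of its ``repeated-from-below'' values and its least ``descent-top'' value.

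Consequently the main step will be to refine the enumeration by an appropriate statistic — for example the value of the last entry together with a flag recording whether a strict descent has yet occurred — so that these two forbidden sets can be tracked, producing a small system of recurrences that collapses to $a_n=3a_{n-1}-a_{n-2}$. The hard part will be exactly this bookkeeping: unlike the clean (essentially unimodal) shape descriptions available for the $012$-containing pairs, the patterns $011$ and $102$ constrain unrelated features of the sequence (equal values on the one hand, an increase straddling a dip on the other), so no one-line structural characterization is available. As a fallback I would try a direct bijection $\I_n(011,102)\to\I_n(012)$; however, small cases show that the two generating trees have different child-distributions, so any such bijection must be global rather than compatible with last-entry deletion (in the spirit of the first-occurrence maps of Sections~\ref{sec:power} and~\ref{sec:cat}), which is why I expect the refined-recurrence route to be the cleaner one to push through.
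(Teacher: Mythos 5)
Your treatment of the pairs $(012,102)$ and $(012,120)$ is correct and coincides with the paper's argument: any occurrence of $102$ or $120$ yields an occurrence of $012$ (the middle-valued letter is positive, so it sits to the right of $e_1=0$, and together with the largest letter of the occurrence forms an increasing triple), hence these classes equal $\I_n(012)$ and \eqref{bis:fib} finishes them.

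The case $p=(011,102)$, however, is not proved; what you offer is a plan, and the specific plan you sketch would fail. You correctly describe when appending a last letter $x$ is forbidden: since $e_1=0$, avoiding $011$ means the positive entries are distinct, so $x$ is forbidden if it repeats a positive value, or if $x$ exceeds the least entry having a smaller entry somewhere to its right. But the resulting child count is \emph{not} determined by ``the last entry plus a flag recording whether a descent has occurred'': the sequences $(0,0,2)$ and $(0,1,2)$ both end in $2$ and both are descent-free, yet the first has three admissible extensions ($x\in\{0,1,3\}$) and the second only two ($x\in\{0,3\}$), because the count depends on how many distinct positive values lie below the threshold. Worse, after appending $x$ the new threshold is the least old entry exceeding $x$, so propagating the state requires essentially the full set of values of $e$, not a bounded amount of data; you never exhibit a closed finite system of recurrences, let alone show it collapses to $a_n=3a_{n-1}-a_{n-2}$. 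This is exactly the hard part, and it is missing. The paper closes it by a different refinement: it compares $\A_{n,k}=\{e\in\I_n(012):\last(e)=k\}$ with $\B_{n,k}=\{e\in\I_n(011,102):\satu(e)=k+1\}$, proving that both triangles satisfy the same recurrences, namely $|\A_{n,0}|=|\A_{n,1}|=|\I_{n-1}(012)|$, $|\A_{n,k}|=|\A_{n-1,k-1}|$ and $|\B_{n,0}|=|\B_{n,1}|=|\I_{n-1}(011,102)|$, $|\B_{n,k}|=|\B_{n-1,k-1}|$ for $2\le k\le n-1$, via explicit bijections that insert or delete a \emph{saturated} (largest) entry rather than the last one; induction then gives $|\B_{n,k}|=|\A_{n,k}|$ and the theorem. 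Your closing remark that no bijection can commute with last-entry deletion is consistent with this, but as it stands the theorem is established for only two of the three pairs.
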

 \begin{proof}
 Since an inversion sequence contains the pattern $102$ or $120$ must also contain the pattern $012$, we have $\I_n(p)=\I_n(012)$ for $p\in\{(012,102),(012,120)\}$. The result for these two cases then follows from~\eqref{bis:fib}. 
 
 Next, we deal with the case $p=(011,102)$. Consider the triangle  
 $$
 \A_{n,k}:=\{e\in\I_n(012):\last(e)=k\},
 $$
 where $\last(e)$ is the last entry of $e$. We claim that for $n\geq2$,
  \begin{equation}\label{eq:a0}
  |\A_{n,0}|=|\A_{n,1}|=a_{n-1}
  \end{equation} and 
  \begin{equation}\label{eq:ak}
 |\A_{n,k}|=|\A_{n-1,k-1}|\text{ for $2\leq k\leq n-1$}.
  \end{equation}
Notice that the mapping $\delta_k: (e_1,e_2,\ldots,e_{n-1})\mapsto (e_1,e_2,\ldots,e_{n-1},k)$ is a one-to-one correspondence between $\I_{n-1}(012)$ and $\A_{n,k}$ for $k=0,1$ and so Eq.~\eqref{eq:a0} follows. For fixed $k$, $2\leq k\leq n-1$, the mapping $(e_1,e_2,\ldots,e_{n-1})\mapsto(0,e_1',e_2',\ldots,e_{n-1}')$ with $e_i'=e_i+\chi(e_i>0)$ is a one-to-one correspondence between $\A_{n-1,k-1}$ and $\A_{n,k}$, and thus Eq.~\eqref{eq:ak} holds. 
For any $e\in\I_n$, we recall from the introduction that   $\satu(e)$ is the number of indices $i\in[n]$ such that $e_i=i-1$.
We aim to show that 
$$
\B_{n,k}:=\{e\in\I_n(011,102): \satu(e)=k+1\}
$$
has the same carnality as $\A_{n,k}$, which would finish the proof. We will do this by showing 
 \begin{equation}\label{eq:b0}
  |\B_{n,0}|=|\B_{n,1}|=|\I_{n-1}(011,102)|
  \end{equation} and 
  \begin{equation}\label{eq:bk}
 |\B_{n,k}|=|\B_{n-1,k-1}|\text{ for $2\leq k\leq n-1$}.
  \end{equation}
  Comparing~\eqref{eq:b0} and~\eqref{eq:bk} with~\eqref{eq:a0} and~\eqref{eq:ak} we have $|\B_{n,k}|=|\A_{n,k}|$ by induction.   
  
  It remains to prove~\eqref{eq:b0} and~\eqref{eq:bk}.  Clearly, the mapping 
  $(e_1,\ldots,e_{n-1})\mapsto (0,e_1,\ldots,e_{n-1})
  $ is a one-to-one correspondence between $\I_{n-1}(011,102)$ and $\B_{n,0}$ and so $|\I_{n-1}(011,102)|=|\B_{n,0}|$. For $e\in\B_{n,1}$, there exist only one index $j>1$ such that $e_j=j-1$ and we introduce the mapping $\rho: \B_{n,1}\rightarrow \I_{n-1}(011,102)$ by
  \begin{itemize}
  \item if $j-2$ does not appear as the entry of $e$, then let $\rho(e)=(e_2,e_3,\ldots,e_j-1,e_{j+1},\ldots,e_n)$;
  \item otherwise, let $\rho(e)$ be constructed by removing the entry $e_j$ from $e$ directly. 
  \end{itemize}
  For instance, the mapping reads  $\rho(0,0,0,3)=(0,0,2)$, $\rho(0,0,1,3)=(0,1,2)$, $\rho(0,0,2,0)=(0,1,0)$, $\rho(0,0,2,1)=(0,0,1)$ and $\rho(0,1,0,0)=(0,0,0)$. The crucial observation about $\rho$ is that $e_j$ is the largest entry of $e$ and   $\rho(e)$ is constructed by the second bullet if and only if $\rho(e)\in\B_{n-1,0}$. To see that $\rho$ is a bijection, we introduce its inverse $\rho^{-1}$ explicitly. For $e\in\I_{n-1}(011,102)$ with $e_j=\ell$ as the largest entry, define $\rho^{-1}: \I_{n-1}(011,102)\rightarrow \B_{n,1}$ as 
  \begin{itemize}
  \item if $e\notin\B_{n-1,0}$, then $\rho^{-1}(e)=(0,e_1,e_2,\ldots,e_j+1,e_{j+1},\ldots,e_n)$;
  \item otherwise, $e\in\B_{n-1,0}$ and let $\rho^{-1}(e)=(e_1,e_2,\ldots,e_{\ell+1},\ell+1, e_{\ell+2},\ldots,e_{n-1})$.
  \end{itemize}
  It is routine to check that $\rho^{-1}$ is the inverse of $\rho$, which proves $|\B_{n,1}|=|\I_{n-1}(011,102)|$ and~\eqref{eq:b0} is established. To see~\eqref{eq:bk}, for any $e\in\B_{n,k}$ ($2\leq k\leq n-1$) with $e_{\ell}$ as the largest entry, consider the mapping $e\mapsto e'$ where $e'$ is obtained from $e$ by removing $e_{\ell}$ directly. Since $e_{\ell}$ is the rightmost saturated entry of $e$, it is easy to  see that $e\mapsto e'$ sets up a one-to-one correspondence between $\B_{n,k}$ and $\B_{n-1,k-1}$, which completes the proof of the theorem. 
 \end{proof}

 \section{Classes 4082(A,B,C): $1+\sum_{i=1}^{n-1}{2i\choose i-1}$}
 \label{sec:4082}

In~\cite[Sec.~3.1.3]{ms}, Martinez and Savage showed that 
$$
|\I_n(>,\neq,-)|=1+\sum_{i=1}^{n-1}{2i\choose i-1}
$$
and conjectured the following Wilf-equivalence. 

\begin{conjecture}[Martinez and Savage~\cite{ms}]
\label{conj:ms}
For $n\geq1$, we have
$$
|\I_n(>,\neq,-)|=|\I_n(<,>,\neq)|.
$$
\end{conjecture}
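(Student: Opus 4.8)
The plan is to reduce the conjecture to the enumeration of a single pattern pair and then count that class directly. First I would translate the two relation triples into ordinary patterns. Checking all thirteen weak patterns of length $3$, a triple $i<j<k$ realizes $(<,>,\neq)$ (that is, $e_i<e_j$, $e_j>e_k$ and $e_i\neq e_k$) exactly when the subword $e_ie_je_k$ is order isomorphic to $021$ or to $120$, so $\I_n(<,>,\neq)=\I_n(021,120)$. On the other side, Martinez and Savage have already shown $|\I_n(>,\neq,-)|=1+\sum_{i=1}^{n-1}{2i\choose i-1}$ (quoted just above the conjecture). Hence it suffices to prove
\[
|\I_n(021,120)|=1+\sum_{i=1}^{n-1}{2i\choose i-1}.
\]

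The next step is a structural description of $\I_n(021,120)$ that exploits $e_1=0$. I claim that $e$ avoids both $021$ and $120$ if and only if (a) the subsequence of positive entries of $e$ is weakly increasing, and (b) no zero entry lies to the right of any position whose value strictly exceeds the least positive value of $e$. For necessity of (a): if $e_a>e_b>0$ with $a<b$ then, since $e_1=0<e_b$, the triple $(1,a,b)$ forms a $021$; conversely a weakly increasing positive subsequence cannot contain $021$. For (b), under (a) any occurrence of $120$ must have its smallest letter equal to $0$ (a positive letter sitting after a larger positive letter would already violate (a)), and such an occurrence is precisely a zero to the right of a position exceeding the least positive value; I would verify both directions carefully. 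This characterization is the conceptual core, is clean, and is not where I expect the difficulty to lie.

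With the characterization in hand I would enumerate by a ``first passage'' decomposition. Writing $c$ for the least positive value, every nonconstant $e$ splits as a prefix over the alphabet $\{0,c\}$ (containing at least one $c$, with $c$ allowed only in positions $\ge c+1$), followed, from the first position $q$ whose value exceeds $c$, by a zero free weakly increasing tail all of whose entries exceed $c$ and obey $e_i\le i-1$. The prefixes of a given length are counted by a power of two, while the tails are weakly increasing sequences bounded by the staircase $e_i\le i-1$, which are enumerated by ballot numbers; summing the product over $c$ and over the splitting position $q$ should collapse to $1+\sum_{i=1}^{n-1}{2i\choose i-1}$. (I have checked that this decomposition reproduces $1,2,6,21$ for $n\le4$, the all zero together with the single value contributions already accounting for $2^n-n$.) The main obstacle is exactly this evaluation: carrying the $c$ dependent staircase bounds through the double sum to the closed form. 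A cleaner alternative, which I would run in parallel, is to introduce a catalytic variable marking the largest entry, read off a functional equation for the resulting bivariate generating function from the decomposition, solve it by the kernel method, and then extract coefficients.

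Finally, combining $|\I_n(021,120)|=1+\sum_{i=1}^{n-1}{2i\choose i-1}$ with the Martinez--Savage evaluation of $|\I_n(>,\neq,-)|$ yields $|\I_n(>,\neq,-)|=|\I_n(<,>,\neq)|$, which is the conjecture, and simultaneously pins down classes 4082(A,B,C). A direct bijection $\I_n(021,120)\to\I_n(>,\neq,-)$ matching their two structural normal forms would give an alternative, formula free proof, but the enumerative route above is the one I would commit to.
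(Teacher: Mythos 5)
Your opening moves are correct and in fact coincide with the paper's: the translation $\I_n(<,>,\neq)=\I_n(021,120)$, and the structural characterization (positive entries weakly increasing; no zero to the right of any entry exceeding the least positive value $c$) is exactly the paper's observation that a $(021,120)$-avoider cannot have a zero entry after the first appearance of the second smallest positive value. Your first-passage decomposition is also valid as a decomposition; it does reproduce $1,2,6,21$. The genuine gap is at precisely the point you flag as ``the main obstacle'': the evaluation is never done, and it is the entire content of the conjecture, since $|\I_n(>,\neq,-)|=1+\sum_{i=1}^{n-1}\binom{2i}{i-1}$ is already known. Concretely, counting your tails by ballot numbers (the number of weakly increasing nonnegative sequences $g_1\le\cdots\le g_m$ with $g_j\le a+j-1$ is $\frac{a+1}{m+a+1}\binom{2m+a}{m}$) and your prefixes by $2^{q-c-1}-1$, your plan reduces, after setting $N=n-c$ and $d=q-c$, to the identity
$$
\sum_{d=2}^{N}\bigl(2^{d-1}-1\bigr)\,\frac{d-1}{N}\binom{2N-d}{N-d+1}=\binom{2N}{N-1}-2^{N}+1 ,
$$
which must hold for every $N\ge 2$. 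This is a nontrivial binomial identity; asserting that the double sum ``should collapse'' to the closed form, or that a kernel-method computation ``run in parallel'' would work, is not a proof. Until one of these is carried out, the conjecture is not established.

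For comparison, the paper closes exactly this gap by a different mechanism: it encodes $021$-avoiders as colored Dyck paths (the ``outline'' $\mathcal{H}$), identifies $\I_n(021,120)$ with the family $\B_n$ of colored Dyck paths in which only east steps of smallest positive height may carry two colors, and decomposes each such path at the first index $i$ with $h_{i+1}=i$ or $i=n$. This produces two coupled functional equations,
$$
S(x)=1+\frac{xH(x)}{1-x}+xC(x)\Bigl(S(x)-\frac{1}{1-x}\Bigr),\qquad H(x)=1+x\bigl(2H(x)-1\bigr)C(x),
$$
which are solved in closed form and matched against the known generating function of $1+\sum_{i=1}^{n-1}\binom{2i}{i-1}$. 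If you wish to finish your route instead, you must actually prove the displayed binomial identity (e.g., via generating functions in the Catalan series $C(x)$, or by a computer-algebra certificate), or genuinely set up and solve the catalytic-variable functional equation you sketch; either would make your argument complete, and it would then be an alternative to the paper's colored-path proof rather than a competing one.
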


The integer sequence $\{1+\sum_{i=1}^{n-1}{2i\choose i-1}\}_{n\geq1}$, whose generating function is  
\begin{equation}\label{gen:sava}
1+\sum_{n\geq1}\left(1+\sum_{i=1}^{n-1}{2i\choose i-1}\right)x^n=\frac{1-4x+\sqrt{-16x^3+20x^2-8x+1}}{2(x-1)(4x-1)},
\end{equation}
has been  registered as A279561 in the OEIS~\cite{oeis}. In this section, we will prove Conjecture~\ref{conj:ms} and two more interpretations for this integer sequence.

\subsection{Pattern pair $(021,120)$: proof of Conjecture~\ref{conj:ms}} 
\label{sec:4.1}
 Recall that a Dyck path $D$ can be represented as $D=h_1h_2\cdots h_n$, where $h_i$ is the height of its $i$-th east step satisfying $0\leq h_i< i$.  Let  $\D_n$ be the set of Dyck paths of  length $n$. Note that $|\D_n|=C_n$, the $n$-th Catalan number, whose generating function  is 
\begin{equation}\label{exp:C}
C(x):=1+\sum_{n\geq1}C_nx^n=\frac{1-\sqrt{1-4x}}{2x}. 
\end{equation}

For our purpose, we color each east step of a Dyck path  by black or red and call such a Dyck path a {\em colored Dyck path}. If the $i$-th east step has height $k$ and color red, then we write $h_i=\bar{k}$. It was observed in~\cite{cor} that an inversion sequence is $021$-avoiding if and only if its positive entries are weakly increasing. By this characterization, each sequence $e\in\I_n(021)$ can be  represented by a colored Dyck path $\mathcal{H}(e)=h_1h_2\cdots h_n$, called the {\em outline of $e$} in~\cite[Def.~2.1]{kl}, where 
$$
h_i=
\begin{cases}
e_i, \quad&\text{if $e_i>0$},\\
\bar{k}, &\text{if $e_i=0$ and $k=\max\{e_1,\ldots,e_i\}$}.
\end{cases}
$$ 
For example, the outline of $(0,1,0,0,2,0,4)$ is the colored Dyck path $\bar{0}1\bar{1}\bar{1}2\bar{2}4$ drawn in Fig.~\ref{fig:outline}. 
Let us denote by $\A_n$ the set of colored Dyck paths of length $n$ satisfying 
\begin{itemize}
\item[(a)] all the east step of height $0$ are colored red, and
\item[(b)] the first east step of each positive height is colored black.
\end{itemize}
\begin{figure}
\begin{center}
\begin{tikzpicture}[scale=.6]
\draw[step=1,color=gray](0,1) grid (7,8);
\draw[very thick](1,1)--(1,2) (1,2)--(2,2)(3,2)--(4,2) (4,2)--(4,3) (4,3)--(5,3) (6,3)--(6,5)(6,5)--(7,5) (7,5)--(7,8);
\draw[very thick,color=red](0,1)--(1,1)(2,2)--(4,2) (5,3)--(6,3);
\draw[color=blue](0,1)--(7,8);
\draw(0.5,0.5) node{$0$};
\draw(1.5,0.5) node{$1$};
\draw(2.5,0.5) node{$0$};
\draw(3.5,0.5) node{$0$};
\draw(4.5,0.5) node{$2$};
\draw(5.5,0.5) node{$0$};
\draw(6.5,0.5) node{$4$};
\end{tikzpicture}
\end{center}
\caption{The outline of an inversion sequence}
\label{fig:outline}
\end{figure}
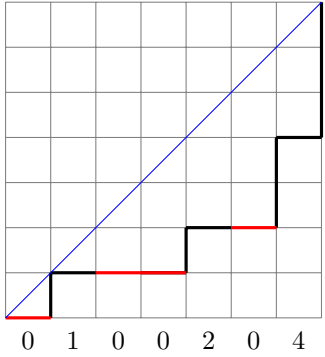
It is clear that the geometric representation $\mathcal{H}$ is a bijection between $\I_n(021)$ and $\A_n$. 
Let $\B_n$ be the set of colored Dyck paths in $\A_n$ such that only east steps with smallest positive height can have two colors. For example, the colored Dyck path in Fig.~\ref{fig:outline} is an element in $\A_7\setminus\B_7$. Note that $\I_n(<,>,\neq)=\I_n(021,120)$ and 
the geometric representation $\mathcal{H}$ induces a bijection between $\I_n(021,120)$ and $\B_n$, as any $(021,120)$-avoiding inversion sequence can not have zero entry after the first appearance of the entry with second smallest positive value. 
Let 
$$
S(x):=1+\sum_{n\geq1}|\B_n|x^n=1+x+ 2x^2+6x^3+21x^4+\cdots. 
$$
In  view of~\eqref{gen:sava}, Conjecture~\ref{conj:ms} is equivalent to the following result. 
\begin{theorem}\label{thm:sava}
We have 
$$
S(x)=\frac{1-4x+\sqrt{-16x^3+20x^2-8x+1}}{2(x-1)(4x-1)}.
$$
\end{theorem}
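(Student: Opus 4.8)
The plan is to count $\B_n$ directly through a generating-function decomposition of the underlying uncolored Dyck paths, and then to simplify the resulting expression (rational in $C(x)$) into the stated closed form. First I would record the coloring structure: since $\mathcal{H}$ identifies $\I_n(021,120)$ with $\B_n$, and in $\B_n$ every height-$0$ step is red, the first step of each positive height is black, and only the smallest positive height may carry both colors, each colored path in $\B_n$ arises from an uncolored Dyck path $D$ by choosing which of the east steps at its smallest positive height, after the first one, are recolored red. Hence $|\B_n|=\sum_{D\in\D_n}2^{c(D)-1}$, where $c(D)$ is the multiplicity of the smallest positive height of $D$, the all-zero path contributing $1$.

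Next I would encode a Dyck path by its column sequence $(c_0,c_1,c_2,\dots)$, with $c_j$ the number of east steps at height $j$; the Dyck condition reads $c_0+\cdots+c_{j-1}\ge j$ whenever $c_j>0$, and the length is $\sum_j c_j$. For a path with positive height, the smallest positive height $m$ forces $c_1=\cdots=c_{m-1}=0$ and $c_m\ge1$, and the ballot condition at level $m$ becomes $c_0\ge m$; writing $b:=c_0-m\ge0$, the truncation $(c_m,c_{m+1},\dots)$ is exactly a column sequence ``with head start $b$'', i.e. one satisfying $b+\sum_{l<k}d_l\ge k$ whenever $d_k>0$. The key input is the standard ballot identity that such head-start sequences (the empty one included) have generating function $C(x)^{b+1}$ by total length. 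Peeling off the first upper column $d_0=c_m\ge1$ and weighting it by $t^{c_m}$ gives generating function $\frac{t\,x\,C(x)^{b+1}}{1-t\,x\,C(x)}$ for the part at heights $\ge m$; summing $x^{m+b}$ times this over all $m\ge1$ and $b\ge0$ (geometric series in $x^{m}$ and in $(xC(x))^{b}$), setting $t=2$, halving the positive-height part to account for the exponent $c(D)-1$, and restoring the all-zero contribution $\frac{1}{1-x}$, yields
\[
S(x)=\frac{1}{1-x}+\frac{x^{2}C(x)}{(1-x)\bigl(1-2xC(x)\bigr)\bigl(1-xC(x)\bigr)}.
\]

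Finally I would simplify using the defining relation $C=1+xC^{2}$ (equivalently~\eqref{exp:C}), so that $2xC=1-\sqrt{1-4x}$ and, with $R:=\sqrt{1-4x}$, one has $1-2xC=R$ and $1-xC=\tfrac{1+R}{2}$; substituting collapses $S(x)$ to $\frac{(1+R)^{2}}{4R(1-x)}$. To match~\eqref{gen:sava} I would record the factorization of the radicand $-16x^{3}+20x^{2}-8x+1=(1-2x)^{2}(1-4x)$, whence $\sqrt{-16x^{3}+20x^{2}-8x+1}=(1-2x)R$, and then verify by clearing denominators (using $R^{2}=1-4x$) that $\frac{(1+R)^{2}}{4R(1-x)}$ equals $\frac{1-4x+(1-2x)R}{2(x-1)(4x-1)}$, the claimed expression.

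I expect the main obstacle to be the second paragraph: isolating the smallest positive height in the column encoding and justifying the head-start identity $C(x)^{b+1}$ together with the weight $t^{c_m}$ on the first upper column, since the ballot condition couples $c_0$ to all higher columns. Once $S(x)$ is expressed through $C(x)$, the rest is routine. As an alternative to the whole computation, one may instead prove combinatorially the recurrence $|\B_n|-|\B_{n-1}|=(n-1)C_{n-1}$ --- equivalently $|\B_n|=1+\sum_{i=1}^{n-1}iC_i$, since $\binom{2i}{i-1}=iC_i$ --- via a bijection with Dyck paths of length $n-1$ carrying one marked east step; this gives $S(x)=\frac{1+x^{2}C'(x)}{1-x}$, and the same radicand factorization completes the proof.
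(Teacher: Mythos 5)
Your proof is correct, but it takes a genuinely different route from the paper's. The paper never passes to uncolored paths: it decomposes a colored path in $\B_n$ at the first index $k$ with $h_{k+1}=k$ into a prefix--suffix pair, which forces the introduction of an auxiliary class $\C_n$ (colored paths whose height-$0$ steps after the first may carry both colors) with series $H(x)$, and then solves the coupled functional equations $S=1+xH/(1-x)+xC\bigl(S-1/(1-x)\bigr)$ and $H=1+x(2H-1)C$. You instead collapse the coloring into the weight $2^{c(D)-1}$ on uncolored Dyck paths, encode a path by its height multiplicities, isolate the smallest positive height $m$ and the surplus $b=c_0-m$, and invoke the ballot (head-start) identity $C(x)^{b+1}$ to sum everything in one explicit geometric computation, arriving at $S(x)=\frac{1}{1-x}+\frac{x^{2}C(x)}{(1-x)(1-2xC(x))(1-xC(x))}$. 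This is equivalent to the paper's solution (using $1/(1-xC)=C$ it reads $\frac{1}{1-x}+\frac{x^{2}C^{2}}{(1-x)\sqrt{1-4x}}$, whose coefficients reproduce $1,1,2,6,21,\dots$), and your final simplification via $2xC=1-\sqrt{1-4x}$ together with the factorization $-16x^{3}+20x^{2}-8x+1=(1-2x)^{2}(1-4x)$ checks out. What your route buys is the elimination of the auxiliary unknown $H$ and of any kernel/equation-solving step: the factor $1/(1-2xC(x))=(1-4x)^{-1/2}=\sum_n\binom{2n}{n}x^n$ appears directly, making the closed form $1+\sum_{i=1}^{n-1}\binom{2i}{i-1}$ transparent. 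What the paper's route buys is economy of input: one recursive decomposition is reused verbatim for both $\B_n$ and $\C_n$, so nothing beyond the Catalan equation is needed, whereas you must cite (or prove) the head-start identity $C^{b+1}$. Your closing alternative via the recurrence $|\B_n|-|\B_{n-1}|=(n-1)C_{n-1}$ is only a sketch (the marked-step bijection is not constructed), but your main argument does not rely on it.
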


We decompose the proof of  the above theorem into the following two lemmas. Let $\C_n$ be the set of colored Dyck paths of length $n$ such that only east steps with height $0$, except the first step (which is colored black), can have two colors. Introduce the generating function
$$
H(x):=1+\sum_{n\geq0} |\C_n|x^n=1+x+3x^2+10x^3+35x^4+\cdots.
$$

\begin{lemma}\label{lem:S}
We have the following functional equation for $S(x)$:
\begin{equation}\label{eq:S}
S(x)=1+xH(x)/(1-x)+xC(x)(S(x)-1/(1-x)).
\end{equation}
\end{lemma}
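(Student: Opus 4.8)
The plan is to carry out the whole argument on the colored Dyck path side, using the bijection $\mathcal{H}$ to replace $\I_n(021,120)$ by $\B_n$, and then to read off \eqref{eq:S} from a first-return decomposition. The starting observation is that a path in $\B_n$ is determined by its underlying uncolored Dyck path together with the colors of the east steps sitting at its \emph{smallest positive height}: every ground step is red, the first step at each positive height is black, and above the smallest positive height every step is forced black, so the only freedom is to paint each of the remaining $m-1$ steps at the smallest positive height either color. Thus a given shape carries weight $2^{m-1}$, which is exactly the weighting implicit in $S(x)$. The point of introducing $\C_n$ is that it is the precise analogue in which the freely $2$-colored level is the \emph{ground} level rather than the smallest positive level (first ground step black, the others free, everything above forced); I will make $H$ appear by migrating the coloring freedom from one level to the other.

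First I would set up the first-return decomposition. Writing a nonempty $P\in\B_n$ as $P=A\cdot B$, where $A$ is the initial arch up to the first return of $P$ to the main diagonal and $B$ is the remaining Dyck path, two translation facts are crucial: $B$ begins \emph{on} the diagonal, so sliding it to the origin yields a genuine Dyck path of length $n-k$ with no height slack, and the interior of $A$ (delete its first $E$ and last $N$) slides to a Dyck path of length $k-1$. The rest of the argument rests on locating the smallest positive height of $P$: the east steps of $A$ lie strictly below the return height $k$ while those of $B$ lie weakly above $k$, so this level sits \emph{entirely} inside $A$ or \emph{entirely} inside $B$, never straddling the cut.

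The key step is the ensuing case split. If $A=E^kN^k$ has all its east steps on the ground, then $P$ has no positive level inside $A$, its smallest positive height equals the base height of $B$, and all the coloring freedom migrates into $B$; the translate of $B$ is then exactly a $\C$-path whose $\C_n$-weight matches the weight of $P$. Collecting these with the all-zero paths $E^kN^k$ and the empty path gives $1+\frac{x}{1-x}\cdot H(x)=1+\frac{xH}{1-x}$. Otherwise $A$ contains an east step above the ground; now the smallest positive height lies in $A$, so the steps of $B$ are all forced black and $B$ is an arbitrary uncolored Dyck path contributing $C(x)$, while the translate of the interior of $A$ is itself a \emph{nonzero} $\B$-path (nonzero precisely because $A\neq E^kN^k$), the stripped $E\cdots N$ wrapper accounting for one extra power of $x$. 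This case therefore contributes $xC(x)\bigl(S(x)-\frac{1}{1-x}\bigr)$, the subtracted $\frac{1}{1-x}$ removing the all-zero interiors. Adding the two cases yields \eqref{eq:S}.

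The step I expect to be the main obstacle is the weight bookkeeping across the cut, rather than the enumeration of the pieces, which is routine. One must check that deleting the ground level in the case $A=E^kN^k$ converts the weighting by smallest-positive-level steps into the ground-level weighting that \emph{defines} $\C_n$ (in particular that the forced-black first step at the base of $B$ matches the black first step of a $\C$-path), and dually that stripping the arch wrapper in the other case preserves the smallest-positive-level weight and sends $A$ to a bona fide element of $\B_{k-1}$. Both identifications hinge on the first return landing on the diagonal, so that no slack is introduced when the sub-paths are translated; pinning down these weight-preserving correspondences exactly is the delicate part of the proof.
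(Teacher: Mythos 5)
Your proof is correct and takes essentially the same approach as the paper: both perform the first-return decomposition of a colored Dyck path in $\B_n$, split according to whether the initial arch lies entirely at ground level, and identify the remainder with a $\C$-path in the first case (giving $xH(x)/(1-x)$, with the empty and all-zero paths absorbed via the constant term of $H$) or the arch interior with a non-all-zero $\B$-path and the remainder with an uncolored Dyck path in the second (giving $xC(x)(S(x)-1/(1-x))$). The weight bookkeeping you single out as delicate is exactly what makes the paper's case distinction work, and your resolution of it matches the paper's implicit one.
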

\begin{proof}
For a colored Dyck path $D=h_1\cdots h_n\in\B_n$, let 
$$
k=\min\{i\in[n]: h_{i+1}=i\text{ or } i=n \}.
$$
Then, $D$ can be decomposed uniquely into a pair $(D_1,D_2)$ of Dyck paths, where 
$$
D_1:=h_2h_3\ldots h_k\quad\text{and}\quad D_2:=(h_{k+1}-k)(h_{k+2}-k)\cdots(h_n-k).
$$ We need to distinguish two cases:  
\begin{itemize}
\item if $D_1=\bar{0}\cdots\bar{0}$, the Dyck path consisting of $k-1$ red east steps of height $0$,  then $D_2\in\C_{n-k}$. 
\item otherwise   $D_1\neq\bar{0}\cdots\bar{0}$ and $D_1\in\B_{k-1}$, then $D_2\in\D_{n-k}$.
\end{itemize}
Turning this decomposition into generating functions then yields
$$
S(x)=1+x\biggl(\sum_{k\geq0}x^k\biggr)H(x)+x\biggl(S(x)-\sum_{k\geq0}x^k\biggr)C(x),
$$
which is equivalent to~\eqref{eq:S}. 
\end{proof}

\begin{lemma}
We have the following functional equation for $H(x)$:
\begin{equation}\label{eq:H}
H(x)=1+x(2H(x)-1)C(x).
\end{equation}
\begin{proof}
We will use similar decomposition  of the colored Dyck paths in $\C_n$ as in the proof of  Lemma~\ref{lem:S}. Let $\tilde\C_n$ be the set of colored Dyck paths of length $n$ such that only east steps of height $0$ can have two colors. Clearly, we have 
\begin{equation}
1+\sum_{n\geq1}|\tilde\C_n|x^n=2H(x)-1.
\end{equation}
For a colored Dyck path $D=h_1\cdots h_n\in\C_n$, let 
$$
k=\min\{i\in[n]: h_{i+1}=i\text{ or } i=n \}.
$$
Then, $D$ can be decomposed uniquely into a pair $(D_1,D_2)$ of Dyck paths, where 
$$
D_1:=h_2h_3\ldots h_k\in\tilde\C_{k-1}\quad\text{and}\quad D_2:=(h_{k+1}-k)(h_{k+2}-k)\cdots(h_n-k)\in\D_{n-k}.
$$
Turning this decomposition into generating function gives~\eqref{eq:H}.  
\end{proof}
\end{lemma}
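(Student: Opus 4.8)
The plan is to prove the functional equation~\eqref{eq:H} by a first-return decomposition of the colored Dyck paths in $\C_n$, exactly parallel to the decomposition used for $\B_n$ in Lemma~\ref{lem:S}, but now feeding into the auxiliary family $\tilde\C_n$ of colored Dyck paths in which \emph{every} east step of height $0$ (including the first) is allowed two colors.

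First I would record the elementary generating-function identity
\begin{equation*}
1+\sum_{n\geq1}|\tilde\C_n|x^n=2H(x)-1.
\end{equation*}
This holds because, for $n\geq1$, a path in $\tilde\C_n$ differs from one in $\C_n$ only in the color of its first east step: in both families this step has height $0$ (as $h_1=0$ for every Dyck path), all subsequent height-$0$ steps carry two colors, and positive-height steps are monochromatic; the single difference is that $\C_n$ forces the first step black whereas $\tilde\C_n$ lets it be either color. Hence $|\tilde\C_n|=2|\C_n|$ for $n\geq1$, and summing gives $1+\sum_{n\geq1}|\tilde\C_n|x^n=1+2(H(x)-1)=2H(x)-1$, where the constant term $1$ accounts for the empty path.

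Next I would perform the first-return decomposition. For a nonempty $D=h_1\cdots h_n\in\C_n$, set $k=\min\{i\in[n]:h_{i+1}=i\text{ or }i=n\}$, so that $(k,k)$ is the first lattice point on the diagonal revisited by $D$ after the origin. Writing $D_1=h_2h_3\cdots h_k$ and $D_2=(h_{k+1}-k)\cdots(h_n-k)$, I would check three things: (i) minimality of $k$ forces $h_{i+1}\leq i-1$ for $1\leq i\leq k-1$, so after reindexing $D_1$ is a genuine Dyck path of length $k-1$; (ii) every height-$0$ step of $D_1$ comes from a height-$0$ step of $D$ of index $\geq2$, hence is free, while the positive-height steps of $D_1$ stay monochromatic, so $D_1\in\tilde\C_{k-1}$ --- crucially \emph{not} $\C_{k-1}$, since removing the forced-black first step of $D$ restores the two-coloring of the (possibly height-$0$) first step of $D_1$; (iii) because heights are weakly increasing and $h_{k+1}=k\geq1$, every step of $D_2$ has positive height in $D$ and is therefore monochromatic, so $D_2$ is an ordinary (uncolored) Dyck path in $\D_{n-k}$.

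Finally I would translate the bijection $D\leftrightarrow(D_1,D_2)$ into generating functions: the first step contributes a factor $x$, the block $D_1\in\tilde\C_{k-1}$ contributes $2H(x)-1$ by the identity above, and $D_2\in\D_{n-k}$ contributes $C(x)$, while the empty path accounts for the constant term $1$. This yields $H(x)=1+x\bigl(2H(x)-1\bigr)C(x)$, which is~\eqref{eq:H}. The main obstacle is step (ii): one must argue carefully that the coloring freedom of $D_1$ matches $\tilde\C_{k-1}$ rather than $\C_{k-1}$, which is precisely what produces the factor $2H(x)-1$ (instead of $H(x)$) and is the whole reason for introducing the auxiliary family $\tilde\C_n$.
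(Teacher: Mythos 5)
Your proposal is correct and takes essentially the same route as the paper: the same first-return decomposition at $k=\min\{i\in[n]:h_{i+1}=i\text{ or }i=n\}$ into $D_1\in\tilde\C_{k-1}$ and $D_2\in\D_{n-k}$, together with the identity $1+\sum_{n\geq1}|\tilde\C_n|x^n=2H(x)-1$. Your added verifications --- that $|\tilde\C_n|=2|\C_n|$ since $h_1=0$ forces only the first step's color to differ, that minimality of $k$ makes $D_1$ a genuine Dyck path whose coloring freedom matches $\tilde\C_{k-1}$ rather than $\C_{k-1}$, and that weak monotonicity of heights makes every step of $D_2$ monochromatic --- are exactly the details the paper leaves implicit.
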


\begin{proof}[{\bf Proof of Theorem~\ref{thm:sava}}]By~\eqref{eq:H}, we have 
\begin{equation}\label{exp:H}
H(x)=\frac{1-xC(x)}{1-2xC(x)}.
\end{equation}
Solving~\eqref{eq:S} for $S(x)$ gives 
\begin{equation}\label{exp:S}
S(x)=\frac{1+x(H(x)-C(x))/(1-x)}{1-xC(x)}.
\end{equation}
Substituting~\eqref{exp:C} and~\eqref{exp:H} into~\eqref{exp:S} then proves Theorem~\ref{thm:sava} after simplification. 
\end{proof}

\subsection{Pattern pair $(110,102)$}\label{sec:4.2}
For a Dyck path $D=h_1\cdots h_n\in\D_n$, let $\last(D)=h_n$ be the height of the last east step of $D$. Let us define $d_{n,m}$ the number of Dyck paths $D\in\D_n$ with $\last(D)=m$ and introduce $d_n(u):=\sum_{m=0}^{n-1}d_{n,m}u^m$. Let 
$$
D(u,x):=\sum_{n\geq1} d_n(u)x^n=x+(1+u)x^2+(2u^2+2u+1)x^3+\cdots
$$
be the enumerator of Dyck paths by the height of their last steps. It is known (cf.~\cite[Lem.~5.2]{cjl}) that $D(u,x)$ is algebraic and
\begin{equation}\label{alg:D}
D(u,x)=\frac{2x}{1-2x+\sqrt{1-4ux}}.
\end{equation}

For any $0\leq m<\ell\leq n$, denote by $\A_{n,m,\ell}$ the set of $(110,102)$-avoiding inversion sequences of length $n$ in which the largest entry is $m$ with the left-most occurrence  of $m$ in position $\ell$. Let $a_{n,m,\ell}=\#\A_{n,m,\ell}$. 
Then, the first few values of the arrays $[a_{n,m,\ell}]_{0\leq m<\ell\leq n}$ are:
\[ \left[\begin{array}{c}
1
\end{array}\right],
\left[\begin{array}{c c }
1 & 0 \\
0 & 1
\end{array}\right], \left[\begin{array}{c c c}
1 & 0 & 0\\
0 & 2 & 1\\
0 & 0 & 2 
\end{array}\right], \left[\begin{array}{c c c c}
1 & 0 & 0 & 0\\
0 & 3 & 2 & 1 \\
0 & 0 & 6 & 3 \\ 
0 & 0 & 0 & 5
\end{array}\right], \left[\begin{array}{c c c c c}
1 & 0 & 0 & 0 & 0\\
0 & 4 & 3 & 2 & 1 \\
0 & 0 & 12 & 8 & 4 \\ 
0 & 0 & 0 & 19 & 9\\
0 & 0 & 0 & 0 & 14
\end{array}\right].
\] 
We have the following recurrence relation for $a_{n,m,\ell}$.

\begin{lemma}For $1\leq m<\ell\leq n$, we have 
\begin{equation}\label{rec:110,102}
a_{n,m,\ell}=d_{n-1,m}-d_{\ell-1,m}+d_{\ell-1,m-1}-\chi(m=1,\ell=2)+\sum_{j=0}^m\sum_{k=\ell-1}^{n-1}a_{n-1,j,k},
\end{equation}
where $a_{n,0,\ell}=\delta_{\ell,1}$ for $1\leq \ell\leq n$.
\end{lemma}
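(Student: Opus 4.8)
The plan is to prove the recurrence bijectively by deleting from each $e\in\A_{n,m,\ell}$ the left-most copy of its maximum entry, namely the entry $e_\ell=m$, and sorting the resulting sequences of length $n-1$ into three families that account for the three groups of terms on the right-hand side. The first thing I would record is the shape of a $(110,102)$-avoiding sequence near its left-most maximum: applying $102$-avoidance with $e_\ell$ playing the role of the large letter shows that the prefix $e_1\le e_2\le\cdots\le e_{\ell-1}$ is weakly increasing. Two cases then arise according to how often $m$ occurs. If $m$ occurs at least twice, say first at position $\ell$ and next at position $p>\ell$, then $110$-avoidance forces $e_{p}=e_{p+1}=\cdots=e_n=m$, while $102$-avoidance with the second $m$ as large letter forces the subword of all entries that are $<m$ to be weakly increasing; consequently deleting $e_\ell$ produces a \emph{weakly increasing} sequence, i.e.\ a path in $\D_{n-1}$, whose last height is $m$ and whose first letter equal to $m$ lies in a position $\ge\ell$. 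If $m$ occurs only once, deleting $e_\ell$ simply produces a $(110,102)$-avoiding sequence in $\I_{n-1}$ all of whose entries are $<m$.

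The deletion map is inverted by reinserting $m$ in position $\ell$, and I would split $\A_{n,m,\ell}$ into three families: (I) those sequences in which $m$ occurs at least twice; (IIa) those in which $m$ occurs once and the left-most maximum of the deleted sequence $e'$ sits in a position $k\ge\ell-1$; and (IIb) those in which $m$ occurs once with that maximum in a position $k\le\ell-2$. By the structure above, family (I) is in bijection with the paths of $\D_{n-1}$ of last height $m$ whose first $m$ lies in position $\ge\ell$; subtracting the complementary paths, which are constant equal to $m$ from position $\ell-1$ on and are therefore counted by $d_{\ell-1,m}$, yields the contribution $d_{n-1,m}-d_{\ell-1,m}$. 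For family (IIa) the prefix of $e'$ up to position $\ell-1$ is automatically weakly increasing, since positions $1,\dots,\ell-1$ lie weakly to the left of the left-most maximum of $e'$, where $e'$ is weakly increasing; hence reinserting $m$ is legitimate and never creates a $102$, giving a bijection with $\bigcup_{0\le j<m}\,\bigcup_{\ell-1\le k\le n-1}\A_{n-1,j,k}$ and hence the double sum.

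Family (IIb) is where I expect the main difficulty. Here, for the reinsertion of $m$ not to create a $102$, the prefix of $e'$ up to position $\ell-1$ must still be weakly increasing even though the left-most maximum $j<m$ of $e'$ already appears by position $\ell-2$; invoking $110$-avoidance once two copies of $j$ appear, this pins $e'$ down to a weakly increasing sequence that is constant equal to $j$ from position $\ell-2$ onward. Counting these over $0\le j<m$ gives $\sum_{j=0}^{m-1}d_{\ell-2,j}$, which I would rewrite as $d_{\ell-1,m-1}$ via the standard truncation of Dyck paths: a path of length $\ell-1$ with last height $m-1$ is determined by its first $\ell-2$ steps, whose last height is an arbitrary value $\le m-1$. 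The single exceptional case is $\ell=2$, where the position $\ell-2=0$ does not exist and family (IIb) is empty, whereas $d_{1,m-1}=\delta_{m,1}$; subtracting $\chi(m=1,\ell=2)$ corrects precisely this over-count. Adding the three contributions gives the recurrence, while the boundary convention $a_{n,0,\ell}=\delta_{\ell,1}$ is immediate, since the only inversion sequence with maximum $0$ is the all-zero one, whose left-most (and only) value occurs in position $1$.
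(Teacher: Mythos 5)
Your proof is correct and is essentially the paper's own argument: you delete the left-most maximum $e_\ell$ and split into the same three cases --- $m$ repeated (your family (I), the paper's case (a), counted by $d_{n-1,m}-d_{\ell-1,m}$), $m$ unique with the second-largest letter first appearing in positions $1,\dots,\ell-2$ (your (IIb), the paper's (b1), counted by $d_{\ell-1,m-1}-\chi(m=1,\ell=2)$), and $m$ unique with that letter first appearing at position $\ell-1$ or after position $\ell$ (your (IIa), the paper's (b2), giving the double sum). If anything, you justify two points the paper leaves implicit: why the prefix of the deleted sequence is automatically weakly increasing in case (IIa), so that reinsertion of $m$ is always legal, and exactly how the correction term $\chi(m=1,\ell=2)$ arises.

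One discrepancy is worth flagging, though it is not a flaw in your reasoning: what you prove (and what the paper's own case (b2) also derives, since it restricts to $0\le j\le m-1$) is the double sum with upper limit $j\le m-1$, whereas the displayed recurrence has $\sum_{j=0}^{m}$. The displayed limit is a typo, as a numerical check confirms: for $(n,m,\ell)=(4,2,3)$ the tabulated value is $a_{4,2,3}=6=d_{3,2}-d_{2,2}+d_{2,1}+\sum_{j=0}^{1}\sum_{k=2}^{3}a_{3,j,k}$, while allowing $j=2$ would add $a_{3,2,3}=2$ and give $8$. So your concluding identification of your count with ``the double sum'' is right in substance, but you should say explicitly that the upper limit in the stated equation must read $m-1$ rather than passing over the mismatch silently.
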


\begin{proof}
For an element $e\in\A_{n,m,\ell}$, we distinguish two cases:
\begin{itemize}
\item[(a)] If there are at least two $m$ appearing as entries of $e$, then consider the sequence $\hat{e}$ obtained from $e$ by removing $e_{\ell}$. Since $e$ is $110$-avoiding, $e_n$ must equal $m$, which forces the entries of $\hat{e}$ to be  weakly increasing, for otherwise there will be a $102$ pattern occurs in $\hat{e}$ (this would contradict the fact that $\hat{e}$ is $102$-avoiding). Thus, $\hat{e}$ can be considered as a Dyck path whose last east step has height $m$ and the $(\ell-1)$-th east step has height smaller than $m$. There are exactly $d_{n-1,m}-d_{\ell-1,m}$ many members of $\A_{n,m,\ell}$ arise  in this case. 
\item[(b)] Otherwise, the letter $m$ occurs only once within $e$. Suppose that $j$ is the second largest  letter of $e$, then we need further to consider two cases:
\begin{itemize}
\item[(b1)] The left-most occurrence of $j$ within $e$ is in positions $1$ through $\ell-2$. Since $e$ is $102$-avoiding, each entry between the left-most $j$ and $m$ must  be a $j$. This forces all entries after $m$ must also equal $j$ because of $e$ is $110$-avoiding. Moreover, as $e$ is $102$-avoiding, the subsequence $(e_1,e_2,\ldots,e_{\ell-2})$ is weakly increasing and so members of $\A_{n,m,\ell}$ arising in this case  (with $j$ as second largest entry) are in bijection with Dyck paths of length $\ell-2$ with last east step of height $j$. Summing over all $0\leq j\leq m-1$, there are totally  $d_{\ell-1,m-1}-\chi(m=1,\ell=2)$ many members of $\A_{n,m,\ell}$ arise in this case. 
\item[(b2)]  The left-most occurrence of $j$ within $e$ is either in position $\ell-1$ or in positions $\ell+1$ through $n$. In either case, the entry $e_{\ell}=m$ is extraneous since the first $\ell-2$ entries of $e$ are governed by a $j$ either coming just before $m$ or at some position after $m$. The deletion of $e_{\ell}$ from $e$ then results in a member of $\A_{n,j,k}$ for some $0\leq j\leq m-1$ and $\ell-1\leq k\leq n-1$ and summing over these indices gives the last term in~\eqref{rec:110,102}. 
\end{itemize}
\end{itemize}
All the above cases together gives~\eqref{rec:110,102}, as desired. 
\end{proof}

For any $n\geq2$, let 
$$
a_{n}(u,v):=\sum_{\ell=2}^n\sum_{m=1}^{\ell-1}a_{n,m,\ell}u^mv^{\ell}. 
$$
Introduce the generating function $a(x;u,v)$ by
$$
a(x;u,v):=\sum_{n\geq2}a_n(u,v)x^n=uv^2x^2+(2uv^2+uv^3+2u^2v^3)x^3+\cdots.
$$
Next, we show that recurrence relation~\eqref{rec:110,102} can be translated into the following functional equation for $a(x;u,v)$. 

\begin{lemma}
The functional equation for $a(x;u,v)$ is 
\begin{multline}\label{fun:110,102}
a(x;u,v)=\frac{vx^2}{x-1}+\frac{vx}{1-v}D(uv,x)+\biggl(\frac{uvx-vx}{1-x}-\frac{v^2x}{1-v}\biggr)D(u,xv)+\\
+\frac{uv^2x}{(1-v)(1-uv)}a(x;uv,1)+\frac{u^2v^2x}{(1-u)(1-uv)}a(x;1,uv)-\frac{uv^2x}{(1-u)(1-v)}a(x;u,v).
\end{multline}
\end{lemma}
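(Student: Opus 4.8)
The plan is to derive \eqref{fun:110,102} by multiplying the recurrence \eqref{rec:110,102} through by $u^m v^\ell x^n$ and summing over all admissible indices $1\le m<\ell\le n$ with $n\ge 2$. By definition the left-hand side becomes $a(x;u,v)$, while the five summands on the right of \eqref{rec:110,102} are treated one at a time and matched against the five terms on the right of \eqref{fun:110,102}. Throughout I will use the identity $\sum_{m\ge 1}d_{N,m}w^m=d_N(w)-1$, valid because $d_{N,0}=1$ (the only weakly increasing Dyck path of length $N$ ending at height $0$ is the all-zero one), to turn partial sums of the coefficients $d_{N,m}$ into the Dyck generating function $D$ of \eqref{alg:D}.

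First I would dispose of the three terms built from the $d_{N,m}$. For $d_{n-1,m}$ the inner geometric sum $\sum_{\ell=m+1}^{n}v^\ell$ separates the contribution into a part carrying $v^{m+1}$ and a part carrying $v^{n+1}$; after setting $N=n-1$ these become $\tfrac{vx}{1-v}D(uv,x)$ and $-\tfrac{v^2x}{1-v}D(u,xv)$, plus two rational remainders. The terms $-d_{\ell-1,m}$ and $+d_{\ell-1,m-1}$ are independent of $n$, so summing $\sum_{n\ge\ell}x^n=\tfrac{x^\ell}{1-x}$ and reindexing $L=\ell-1$ yields $-\tfrac{vx}{1-x}D(u,xv)$ and $+\tfrac{uvx}{1-x}D(u,xv)$, the extra factor $u$ and the full polynomial $d_{L}(u)$ in the latter arising from the shift $m\mapsto m-1$. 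Adding the three coefficients of $D(u,xv)$ reproduces $\big(\tfrac{uvx-vx}{1-x}-\tfrac{v^2x}{1-v}\big)D(u,xv)$, while the $d_{n-1,m}$ computation already supplied the isolated term $\tfrac{vx}{1-v}D(uv,x)$. The indicator term $-\chi(m=1,\ell=2)$ contributes simply $-\tfrac{uv^2x^2}{1-x}$.

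The crux is the double sum. Setting $N=n-1$ and interchanging the order of summation, I would fix $(N,j,k)$ with $0\le j<k\le N$ and let $(m,\ell)$ range over the region cut out by $0\le j\le m-1$, $m\le\ell-1$ and $\ell-1\le k$, that is $j+1\le m\le k$ and $m+1\le\ell\le k+1$. The nested geometric sum $\sum_{m}\sum_{\ell}u^m v^\ell$ over this triangle evaluates to four monomials whose $(j,k)$-dependence is proportional to $(uv)^j$, to $(uv)^k$ (two of them), and to $u^jv^k$. These are exactly the shapes produced by the specializations $a(x;uv,1)$, $a(x;1,uv)$ and $a(x;u,v)$; reading off the prefactors gives the $a(x;uv,1)$- and $a(x;u,v)$-terms of \eqref{fun:110,102} at once, while the two $(uv)^k$-pieces combine, through the partial-fraction identity $\tfrac{1}{1-v}\big(\tfrac{1}{1-u}-\tfrac{1}{1-uv}\big)=\tfrac{u}{(1-u)(1-uv)}$, into the single $a(x;1,uv)$-term. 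The boundary $j=0$ must be handled apart: since $a_{n-1,0,k}=\delta_{k,1}$ the region collapses to $m=1,\ell=2$ and contributes $+\tfrac{uv^2x^2}{1-x}$, exactly cancelling the indicator contribution found above.

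It then remains to collect all the rational remainders produced along the way and to check that they telescope to $\tfrac{vx^2}{x-1}$; after clearing the common denominator $(1-v)(1-x)(1-vx)$ this reduces to the polynomial identity $-(1-vx)+v^2(1-x)+v(1-v)=-(1-v)(1-vx)$, which is immediate. I expect the only genuinely delicate step to be the bookkeeping in the double sum: pinning down the $(m,\ell)$-region for fixed $(j,k)$, isolating the $j=0$ boundary, and recognizing the three specializations of $a$ hidden among the four geometric-sum monomials. Every remaining step is routine summation of geometric series.
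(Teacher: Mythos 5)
Your proposal is correct and follows essentially the same route as the paper's proof: multiply \eqref{rec:110,102} by $x^nu^mv^{\ell}$, sum over $1\le m<\ell\le n$, turn the three $d$-terms into the $D(uv,x)$ and $D(u,vx)$ pieces, and resolve the double sum into the three specializations of $a$. The only real difference is that the paper delegates the double-sum manipulation to \cite[Lem.~3.5]{mash}, while you carry out the interchange of summation, the partial-fraction recombination of the two $(uv)^k$ monomials, and the collection of rational remainders explicitly; your closing identity $-(1-vx)+v^2(1-x)+v(1-v)=-(1-v)(1-vx)$ is indeed what the remainder check reduces to.

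One point deserves emphasis. Your summation region uses $0\le j\le m-1$, whereas the recurrence as displayed in \eqref{rec:110,102} has upper limit $j=m$. Your reading is in fact the correct one: the displayed limit is a typo, since the paper's own derivation of the recurrence (case (b2)) produces $0\le j\le m-1$, and only that range is consistent with the tabulated arrays $[a_{n,m,\ell}]$ (e.g.\ it gives $a_{4,2,3}=6$ rather than $8$) and with \eqref{fun:110,102} itself. Had you summed up to $j=m$ literally, the coefficients of $a(x;uv,1)$ and $a(x;u,v)$ would come out as $\frac{vx}{(1-v)(1-uv)}$ and $-\frac{v^2x}{(1-u)(1-v)}$ respectively, and the stated equation would not follow. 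So what might look like a mismatch between your setup and the recurrence is actually a silent correction that both your argument and the paper's own computation rely on.
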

\begin{proof}
Multiplying both sides of~\eqref{rec:110,102} by $x^nu^mv^{\ell}$ and summing over all $1\leq m<\ell\leq n$ will yield a functional equation for $a(x;u,v)$. More precisely, we need to simplify the following terms coming from the right hand side of~\eqref{rec:110,102}:
\begin{align*}
&\quad\sum_{n\geq2}x^n\sum_{\ell=2}^nv^{\ell}\sum_{m=1}^{\ell-1}(d_{\ell-1,m-1}-d_{\ell-1,m})u^m\\
&=\sum_{n\geq2}x^n\sum_{\ell=2}^nv^{\ell}(ud_{\ell-1}(u)-d_{\ell-1}(u)+1)\\
&=(u-1)\sum_{\ell\geq2}d_{\ell-1}(u)v^{\ell}\sum_{n\geq\ell}x^n+\sum_{\ell\geq2}v^{\ell}\sum_{n\geq\ell}x^n\\
&=\frac{(u-1)vx}{1-x}D(u,vx)+\frac{v^2x^2}{(1-x)(1-vx)},
\end{align*}
\begin{align*}
&\quad\sum_{n\geq2}x^n\sum_{\ell=2}^nv^{\ell}\sum_{m=1}^{\ell-1}d_{n-1,m}u^m\\
&=\sum_{n\geq2}x^n\sum_{m=1}^{n-1}d_{n-1,m}u^m\sum_{\ell=m+1}^nv^{\ell}\\
&=\frac{v}{v-1}\sum_{n\geq2}\biggl(d_{n-1}(uv)-1-v^n(d_{n-1}(u)-1)\biggr)x^n\\
&=\frac{v}{v-1}\biggl(xD(uv,x)-\frac{x^2}{1-x}-vxD(u,vx)+\frac{v^2x^2}{1-vx}\biggr),
\end{align*}
\vskip0.1in
\begin{equation*}
-\sum_{n\geq2}x^n\sum_{\ell=2}^n\sum_{m=1}^{\ell-1}\chi(m=1,\ell=2)u^mv^{\ell}=-\sum_{n\geq2}uv^2x^n=\frac{uv^2x^2}{x-1},
\end{equation*}
and 
\begin{multline*}
\sum_{n\geq2}x^n\sum_{\ell=2}^nv^{\ell}\sum_{m=1}^{\ell-1}u^m\sum_{j=0}^m\sum_{k=\ell-1}^{n-1}a_{n-1,j,k}=\frac{uv^2x^2}{1-x}+\frac{uv^2x}{(1-v)(1-uv)}a(x;uv,1)\\
+\frac{u^2v^2x}{(1-u)(1-uv)}a(x;1,uv)-\frac{uv^2x}{(1-u)(1-v)}a(x;u,v),
\end{multline*}
where the last equality follows from the same manipulation as in~\cite[Lem.~3.5]{mash}.
Summing over all the above expressions gives the functional equation~\eqref{fun:110,102}.
\end{proof}

Finally, we will solve the functional equation~\eqref{fun:110,102} using the kernel method to obtain the following expression for $a(x;1,1)$.

\begin{theorem}\label{thm:110,102}
We have 
\begin{equation}\label{exp:110,102}
a(x;1,1)=\frac{x^2+x^2\sqrt{1-4x}}{(x-1)((3x-1)\sqrt{1-4x}-4x^2+5x-1)}.
\end{equation}
Equivalently, 
$$
|\I_n(110,102)|=1+\sum_{i=1}^{n-1}{2i\choose i-1}.
$$
\end{theorem}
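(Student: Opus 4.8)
The plan is to solve the functional equation~\eqref{fun:110,102} for $a(x;1,1)$ by the kernel method and then match the result with~\eqref{gen:sava}. First I would record the bridge between $a(x;1,1)$ and the quantity we want: for $n\geq1$ the polynomial $a_n(1,1)=\sum_{m,\ell}a_{n,m,\ell}$ counts exactly the $(110,102)$-avoiding inversion sequences of length $n$ having at least one positive entry, so $a_n(1,1)=|\I_n(110,102)|-1$ and hence $1+\frac{x}{1-x}+a(x;1,1)=\sum_{n\geq0}|\I_n(110,102)|x^n$. Thus it suffices to produce the closed form~\eqref{exp:110,102} for $a(x;1,1)$ and to check that $1+\frac{x}{1-x}+a(x;1,1)$ equals the right-hand side of~\eqref{gen:sava}. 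To set up the kernel I would collect the $a(x;u,v)$-terms in~\eqref{fun:110,102}, writing it as $K(u,v)\,a(x;u,v)=R(u,v)$ with kernel $K(u,v)=1+\frac{uv^2x}{(1-u)(1-v)}$, where $R(u,v)$ gathers the two $D$-terms together with the reduced unknowns $a(x;uv,1)$ and $a(x;1,uv)$. The decisive observation is that these last two series depend only on the product $w:=uv$, so I would pass to variables $(w,u)$ with $v=w/u$, aiming to pin down the pair $a(x;w,1),\,a(x;1,w)$.

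Then I would run the two-root kernel method. Under $v=w/u$ the equation $K=0$ becomes the quadratic $u^2-(1+w)u+(w-w^2x)=0$, with roots $u_\pm=\tfrac12\bigl((1+w)\pm\sqrt{(1-w)^2+4w^2x}\bigr)$; for fixed formal $w\neq0,1$ both are power series in $x$ (with coefficients rational in $w$), satisfying $u_+(w,0)=1$ and $u_-(w,0)=w$. Substituting each root kills the $a(x;u,v)$-term and leaves $R=0$; although the factors $\frac1{1-u}$ and $\frac1{1-v}$ look singular (one of $1-u_+$, $1-v_-$ vanishes to order $x$ at $x=0$), every such factor is multiplied by an explicit $x$, so both substitutions yield genuine power-series identities. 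A useful simplification is that the discriminant inside $D\bigl(u,\tfrac{xw}{u}\bigr)$ collapses, since $4u\cdot\tfrac{xw}{u}=4xw$ is independent of $u$; by~\eqref{alg:D} this gives $D\bigl(u,\tfrac{xw}{u}\bigr)=\frac{2xw}{u(1+\sqrt{1-4xw})-2xw}$, and $D(w,x)=\frac{2x}{1-2x+\sqrt{1-4xw}}$ likewise involves only $\sqrt{1-4xw}$. The two instances $u=u_\pm$ then form a $2\times2$ linear system in $a(x;w,1)$ and $a(x;1,w)$; solving by Cramer's rule and reducing the symmetric functions of the roots via $u_++u_-=1+w$ and $u_+u_-=w-w^2x$ expresses $a(x;w,1)$ as a rational function of $w$ and $\sqrt{1-4xw}$, the antisymmetric factor $u_+-u_-=\sqrt{(1-w)^2+4w^2x}$ cancelling between numerator and denominator.

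Finally I would specialize $w\to1$. Because the reduced expression is rational in $w$ and $\sqrt{1-4xw}$ it stays regular there even though the two roots coalesce ($u_\pm|_{w=1}=1\pm\sqrt{x}$), and setting $w=1$ yields~\eqref{exp:110,102}. The last step is the algebraic identity $1+\frac{x}{1-x}+a(x;1,1)$ equals the right-hand side of~\eqref{gen:sava}; here I would use the factorization $-16x^3+20x^2-8x+1=(1-2x)^2(1-4x)$, so that $\sqrt{-16x^3+20x^2-8x+1}=(1-2x)\sqrt{1-4x}$ and both sides become rational in $\sqrt{1-4x}$, whence the identity follows by rationalizing. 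Since~\eqref{gen:sava} is by definition the generating function of $1+\sum_{i=1}^{n-1}{2i\choose i-1}$, this delivers the claimed formula for $|\I_n(110,102)|$. The main obstacle is the bookkeeping around the confluent limit $w\to1$: one must solve the $2\times2$ system in a form manifestly regular at $w=1$ and verify that the spurious branch $\sqrt{(1-w)^2+4w^2x}$ (degenerating to $2\sqrt{x}$ at $w=1$) together with the apparent poles at $u=1$ and $v=1$ all cancel, leaving only $\sqrt{1-4x}$ in the answer.
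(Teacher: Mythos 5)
Your proposal is correct and follows essentially the same route as the paper's proof: the same kernel $1+\frac{uv^2x}{(1-u)(1-v)}$, the same key observation that the unknown series $a(x;uv,1)$ and $a(x;1,uv)$ depend only on the product $uv$, the same use of two points on the kernel curve with equal product to set up a $2\times 2$ linear system, solved via the symmetric functions of the two roots and then specialized at $1$ and compared with~\eqref{gen:sava}. Your parametrization (a quadratic in $u$ with $w=uv$ held fixed) is just a reciprocal change of variables from the paper's (solving the kernel, which is linear in $u$, and then finding the two values of $v$ yielding a prescribed product), so the two arguments coincide up to bookkeeping.
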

\begin{proof}
We apply the kernel method and set the coefficients of $a(x;u,v)$ on both sides of~\eqref{fun:110,102} equal:
$$
1=-\frac{uv^2x}{(1-u)(1-v)}.
$$
Solving this equation for $u$ gives 
\begin{equation}\label{eq:u1}
u=u(x,v)=\frac{1-v}{1-v-v^2x}
\end{equation}
Let us introduce 
$$
\alpha=\alpha(x,v)=uv=\frac{v-v^2}{1-v-v^2x}.
$$
Substituting the expression for $u$ given by~\eqref{eq:u1} into~\eqref{fun:110,102},  and simplifying yields 
\begin{multline}\label{eq:alpha}
a(x;\alpha,1)=\frac{\alpha(v-1)}{v-\alpha}a(x;1,\alpha)+\frac{x(1-v)(1-\alpha)}{\alpha(1-x)}+\frac{\alpha-1}{\alpha}D(\alpha,x)\\+\frac{(1-\alpha)(v-\alpha-v\alpha-v^2x)}{v\alpha(1-x)}D(\alpha/v,vx). 
\end{multline}
Observe that $\alpha(x,w)=v$ whenever $w=w(x,v)$ is a root of the quadratic equation
\begin{equation}\label{eq:w1}
(1-vx)w^2-(1+v)w+v=0.
\end{equation}
Replacing  $v$ by $w$ in~\eqref{eq:alpha} gives
\begin{multline}\label{eq:alp2}
a(x;v,1)=E_1(v,w)a(x;1,v)+E_2(v,w)+\frac{(v-1)D(v,x)}{v}+E_3(v,w)D(v/w,wx),
\end{multline}
where 
$$
E_1(v,w)=\frac{v(w-1)}{w-v}, \quad E_2(v,w)=\frac{x(1-w)(1-v)}{v(1-x)}
$$
and 
$$
E_3(v,w)=\frac{(1-v)(w-v-vw-w^2x)}{wv(1-x)}.
$$
Let $w_1$ and $w_2$ denote the two roots of the equation~\eqref{eq:w1}. Substituting $w_1$ and $w_2$ into~\eqref{eq:alp2} then gives
$$
a(x;v,1)=E_1(v,w_1)a(x;1,v)+E_2(v,w_1)+\frac{(v-1)D(v,x)}{v}+E_3(v,w_1)D(v/w_1,w_1x)
$$
and
$$
a(x;v,1)=E_1(v,w_2)a(x;1,v)+E_2(v,w_2)+\frac{(v-1)D(v,x)}{v}+E_3(v,w_2)D(v/w_2,w_2x).
$$
Solving  this system of equations yields 
\begin{align*}
a(x;1,v)=\frac{E_2(v,w_1)-E_2(v,w_2)}{E_1(v,w_2)-E_1(v,w_1)}+\frac{E_3(v,w_1)D(v/w_1,w_1x)-E_3(v,w_2)D(v/w_2,w_2x)}{E_1(v,w_2)-E_1(v,w_1)}
 \end{align*}
 Involving expression~\eqref{alg:D} for $D(u,x)$,  
 $$
 w_1+w_2=\frac{v+1}{1-vx}\quad\text{and}\quad w_1w_2=\frac{v}{1-vx},
 $$
 we get~\eqref{exp:110,102} after simplification. This completes the proof of the first statement. 
 
 The second statement follows from the fact that 
 $$1+\sum_{n\geq1}|\I_n(110,102)|x^n=a(x;1,1)+\frac{1}{1-x}$$ equals the  right-hand side of~\eqref{gen:sava}, which ends the proof of the theorem. 
\end{proof}

\subsection{Pattern pair $(102,120)$}\label{sec:4.3}
As for the pattern pair $(102,110)$, we consider the set $\B_{n,m,\ell}$  of $(102,120)$-avoiding inversion sequences of length $n$ in which the largest entry is $m$ with the left-most occurrence  of $m$ in position $\ell$. Denote by $b_{n,m,\ell}$ the cardinality of $\B_{n,m,\ell}$. The first few values of the arrays $[b_{n,m,\ell}]_{0\leq m<\ell\leq n}$ are:
\[ \left[\begin{array}{c}
1
\end{array}\right],
\left[\begin{array}{c c }
1 & 0 \\
0 & 1
\end{array}\right], \left[\begin{array}{c c c}
1 & 0 & 0\\
0 & 2 & 1\\
0 & 0 & 2 
\end{array}\right], \left[\begin{array}{c c c c}
1 & 0 & 0 & 0\\
0 & 4 & 2 & 1 \\
0 & 0 & 5 & 3 \\ 
0 & 0 & 0 & 5
\end{array}\right], \left[\begin{array}{c c c c c}
1 & 0 & 0 & 0 & 0\\
0 & 8 & 4 & 2 & 1 \\
0 & 0 & 13 & 7 & 4 \\ 
0 & 0 & 0 & 14 & 9\\
0 & 0 & 0 & 0 & 14
\end{array}\right].
\] 
The following recurrence relation for $b_{n,m,\ell}$ holds. 

\begin{lemma}For $1\leq m<\ell\leq n$, we have 
\begin{equation}\label{rec:120,102}
b_{n,m,\ell}=d_{\ell,m-1}+\sum_{j=1}^m\sum_{k=\ell}^{n-1}b_{n-1,j,k}.
\end{equation}
\end{lemma}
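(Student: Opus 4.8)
The plan is to mimic the treatment of the pattern pair $(110,102)$, but now built around the \emph{first} copy of the largest entry $e_\ell=m$. First I would record two structural facts valid for every $e\in\B_{n,m,\ell}$ with $1\le m<\ell$, both immediate from the fact that $e_\ell=m$ is the maximum and all entries to its left are strictly smaller. Fact (i): the prefix $(e_1,\dots,e_{\ell-1})$ is weakly increasing, since a descent $e_b<e_a$ with $a<b<\ell$ together with $e_\ell=m$ would give a $102$ pattern. Fact (ii): writing $M:=\max_{i<\ell}e_i=e_{\ell-1}$, every entry after position $\ell$ satisfies $e_k\ge M$, since otherwise $e_a,e_\ell,e_k$ with $a<\ell<k$ and $e_k<e_a<m$ is a $120$ pattern. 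Thus each element of $\B_{n,m,\ell}$ consists of a weakly increasing prefix bounded by $m-1$, a single peak $m$ at position $\ell$, and a tail contained in $[M,m]$.

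Next I would split $\B_{n,m,\ell}$ according to whether the tail $(e_{\ell+1},\dots,e_n)$ is constant equal to $M$. Since the tail is $\ge M$ by Fact (ii), it is constant $=M$ exactly when its maximum does not exceed $M$, and I expect these to be the $d_{\ell,m-1}$ ``Dyck'' sequences: such an $e$ is determined by its prefix, and the map $e\mapsto(e_1,\dots,e_{\ell-1},m-1)$ is a bijection onto the weakly increasing inversion sequences of length $\ell$ with last entry $m-1$, that is, onto the Dyck paths counted by $d_{\ell,m-1}$. For the complementary family (tail not constant $M$) I would delete $e_\ell$; by Facts (i)--(ii) the result lies in $\B_{n-1,j,k}$ with $1\le j\le m$ and $\ell\le k\le n-1$, where the new maximum $j$ is either the tail maximum (which exceeds $M$) or $m$ itself when $m$ repeats, and is first attained at a position $\ge\ell$. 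The inverse simply re-inserts the value $m$, a fixed parameter of the target class, at position $\ell$. Summing over the reachable $(j,k)$ produces $\sum_{j=1}^{m}\sum_{k=\ell}^{n-1}b_{n-1,j,k}$, and adding the Dyck count gives~\eqref{rec:120,102}.

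The hard part will be checking that re-inserting the global maximum $m$ at position $\ell$ never creates a forbidden pattern and that the stated index ranges come out exactly. Insertion as the large letter ``$2$'' of a $102$ is blocked by weak monotonicity of the prefix (Fact (i) for $e'$), and $m$ can play no other role in $102$ or $120$ because nothing exceeds it. The genuinely delicate case is $m$ as the large middle letter ``$2$'' of a $120$: this forces me to show that in the source $e'\in\B_{n-1,j,k}$ every entry from position $\ell$ onward weakly dominates every entry before $\ell$, which I would deduce from Fact (i) applied to $e'$ on $[1,k-1]$ together with Fact (ii) applied to $e'$ beyond position $k$, using $\ell\le k$. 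The remaining work is bookkeeping at the boundaries: the value $j=0$ (the all-zero $e'$) corresponds to the degenerate tail $=M=0$ and must be absorbed into the Dyck term rather than the sum, and when $\ell=n$ the inner sum is empty, leaving $b_{n,m,n}=d_{n,m-1}$, consistent with the displayed arrays.
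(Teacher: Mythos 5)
Your proof is correct and takes essentially the same route as the paper's: your case (I) (tail constant $=M$) coincides exactly with the paper's case where $m$ occurs once and the second-largest letter first appears left of position $\ell$, identified with Dyck paths counted by $d_{\ell,m-1}$ (the paper sums $d_{\ell-1,j}$ over $j<m$ where you append a final step of height $m-1$ --- the same identity), while your case (II) merges the paper's two remaining cases, both handled by the identical deletion of $e_\ell$. The only real difference is one of rigor: you verify explicitly that re-inserting $m$ at position $\ell$ creates no $102$ or $120$ and that the index ranges $1\le j\le m$, $\ell\le k\le n-1$ are exactly attained, where the paper simply declares the deleted entry ``extraneous.''
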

\begin{proof}
For an element $e\in\B_{n,m,\ell}$, we distinguish two cases:
\begin{itemize}
\item[(a)] If there are at least two $m$ appearing as entries of $e$, then the left-most $m$ is extraneous concerning avoidance of $102$ and $120$.  Deletion of this $m$ results in a member of $\B_{n,m,i}$ for some $\ell\leq i\leq n-1$. So there are $\sum_{i=\ell}^{n-1}b_{n-1,m,i}$ many members of $\B_{n,m,\ell}$ arise  in this case. 
\item[(b)] Otherwise, the letter $m$ occurs only once within $e$. Suppose that $j$ is the second largest  letter of $e$, then we need further to consider two cases:
\begin{itemize}
\item[(b1)] The left-most occurrence of $j$ within $e$ is in positions $1$ through $\ell-1$. Since $e$ is $102$-avoiding, the subsequence formed by entries to the left of $m$ must  be weakly increasing.  Moreover, as $e$ is $120$-avoiding, the entries appear after $e$ are all equal $j$. So members of $\B_{n,m,\ell}$ arising in this case  (with $j$ as second largest entry) are in bijection with Dyck paths of length $\ell-1$ with last east step of height $j$, $0\leq j\leq m-1$. Summing over all $0\leq j\leq m-1$, there are in total   $d_{\ell,m-1}$ many members of $\B_{n,m,\ell}$ arising  in this case. 
\item[(b2)]  The left-most occurrence of $j$ within $e$ is in positions $\ell+1$ through $n$. In each case, the entry $e_{\ell}=m$ is extraneous since the first $\ell-1$ entries of $e$ are governed by a $j$  at some position after $m$. The deletion of $e_{\ell}$ from $e$ then results in a member of $\B_{n,j,k}$ for some $1\leq j\leq m-1$ and $\ell\leq k\leq n-1$. Summing over these indices gives $\sum_{j=1}^{m-1}\sum_{k=\ell}^{n-1}b_{n-1,j,k}$. 
\end{itemize}
\end{itemize}
The above cases together gives~\eqref{rec:120,102}, as desired. 
\end{proof}

For any $n\geq2$, let 
$$
b_{n}(u,v):=\sum_{\ell=2}^n\sum_{m=1}^{\ell-1}b_{n,m,\ell}u^mv^{\ell}.
$$ 
Introduce the generating function $b(x;u,v)$ by
$$
b(x;u,v):=\sum_{n\geq2}b_n(u,v)x^n=uv^2x^2+(2uv^2+uv^3+2u^2v^3)x^3+\cdots.
$$
We could translate recurrence relation~\eqref{rec:120,102} into a functional equation for $B(x;u,v)$.
\begin{lemma}
We have 
\begin{multline}\label{fun:120,102}
b(x;u,v)=\frac{u(D(u,vx)-vx-vxD(1,uvx))}{1-x}+\frac{vx}{(1-v)(1-uv)}b(x;uv,1)\\-\frac{vx}{(1-u)(1-v)}b(x;u,v)+\frac{uvx}{(1-u)(1-uv)}b(x;1,uv).
\end{multline}
\end{lemma}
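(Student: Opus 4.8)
The plan is to imitate the proof of the functional equation \eqref{fun:110,102} for the pattern pair $(110,102)$: multiply both sides of the recurrence \eqref{rec:120,102} by $x^nu^mv^{\ell}$ and sum over all $1\le m<\ell\le n$ (with $n\ge2$). The left-hand side collapses to $b(x;u,v)$ by definition, so the task reduces to identifying the generating-function avatar of each of the two terms on the right of \eqref{rec:120,102}.

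For the first term $d_{\ell,m-1}$, I would shift the inner index and write $\sum_{m=1}^{\ell-1}d_{\ell,m-1}u^m=u\bigl(d_\ell(u)-d_{\ell,\ell-1}u^{\ell-1}\bigr)$, where $d_\ell(u)=\sum_{m}d_{\ell,m}u^m$ is the polynomial whose generating function is $D(u,x)$. The only Dyck path of length $\ell$ whose last east step attains the maximal height $\ell-1$ is the staircase, so $d_{\ell,\ell-1}=C_{\ell-1}=d_{\ell-1}(1)$; this is exactly the coefficient that assembles $D(1,uvx)=\sum_{n\ge1}C_n(uvx)^n$. Summing over $n\ge\ell$ via $\sum_{n\ge\ell}x^n=x^\ell/(1-x)$ and recognizing the two resulting series as $D(u,vx)$ and $D(1,uvx)$ then yields the first summand
$$\frac{u\bigl(D(u,vx)-vx-vxD(1,uvx)\bigr)}{1-x}$$
of \eqref{fun:120,102}. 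In contrast to the $(110,102)$ case, the recurrence \eqref{rec:120,102} carries no Kronecker-type correction, so this step is slightly cleaner here.

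The main work is the double-sum term $\sum_{j=1}^m\sum_{k=\ell}^{n-1}b_{n-1,j,k}$. I would interchange the order of summation, set $N=n-1$, and for each fixed triple $(N,j,k)$ collect the total weight $W(j,k)=\sum_{\ell=j+1}^{k}v^{\ell}\sum_{m=j}^{\ell-1}u^m$ that multiplies $b_{N,j,k}$. Evaluating the two geometric sums and using the identity $\frac{1}{1-v}-\frac{u}{1-uv}=\frac{1-u}{(1-v)(1-uv)}$ to cancel the spurious $(1-u)$ factor, $W(j,k)$ collapses into exactly three monomials,
$$W(j,k)=\frac{v}{(1-v)(1-uv)}(uv)^j-\frac{v}{(1-u)(1-v)}u^jv^k+\frac{uv}{(1-u)(1-uv)}(uv)^k.$$
Since $b(x;uv,1)$, $b(x;u,v)$ and $b(x;1,uv)$ are obtained from $\sum_{N,j,k}b_{N,j,k}x^N$ by attaching the weights $(uv)^j$, $u^jv^k$ and $(uv)^k$ respectively, restoring the extra factor $x$ (from $x^n=x\cdot x^N$) reproduces the last three summands of \eqref{fun:120,102}. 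Summing the two contributions gives \eqref{fun:120,102}.

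The step I expect to be most delicate is the reduction of $W(j,k)$: one must be careful with the ranges $\ell\ge j+1$ and $m\ge j\ge1$ (so that the case $j=0$ and the all-zero sequence are correctly excluded from $b(x;u,v)$), and must check that the partial-fraction collapse is exact rather than leaving lower-order boundary terms. This is precisely the manipulation carried out in \cite[Lem.~3.5]{mash} and mirrored in the $(110,102)$ analysis, so once the bookkeeping on the summation ranges is pinned down the remaining algebra is routine.
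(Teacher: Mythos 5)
Your proposal is correct and follows essentially the same route as the paper: multiply the recurrence \eqref{rec:120,102} by $x^nu^mv^{\ell}$, sum over $1\leq m<\ell\leq n$, evaluate the first term via $\sum_{m=1}^{\ell-1}d_{\ell,m-1}u^m=u(d_{\ell}(u)-C_{\ell-1}u^{\ell-1})$, and collapse the double sum by interchanging summation and geometric-series manipulations (your weight $W(j,k)$ and its three-monomial reduction check out exactly, and reproduce the paper's three $b$-terms). One small slip in your justification: it is not true that the staircase is the \emph{only} Dyck path of length $\ell$ whose last east step has height $\ell-1$ --- there are $C_{\ell-1}$ of them, obtained by appending such a step to any Dyck path of length $\ell-1$ --- but the identity $d_{\ell,\ell-1}=C_{\ell-1}$ you actually use is precisely this correct count, so the derivation is unaffected.
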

\begin{proof}
First we compute
\begin{align*}
&\quad\sum_{n\geq2}x^n\sum_{\ell=2}^nv^{\ell}\sum_{m=1}^{\ell-1}d_{\ell,m-1}u^m\\
&=u\sum_{n\geq2}x^n\sum_{\ell=2}^nv^{\ell}(d_{\ell}(u)-C_{\ell-1}u^{\ell-1})\\
&=u\sum_{\ell\geq2}v^{\ell}(d_{\ell}(u)-C_{\ell-1}u^{\ell-1})\sum_{n\geq\ell}x^n\\
&=\frac{u(D(u,vx)-vx-vxD(1,uvx))}{1-x}.
\end{align*}
Next we calculate 
\begin{align*}
&\quad\sum_{n\geq2}x^n\sum_{\ell=2}^nv^{\ell}\sum_{m=1}^{\ell-1}u^m\sum_{j=1}^m\sum_{k=\ell}^{n-1}b_{n-1,j,k}\\
&=\sum_{n\geq2}x^n\sum_{\ell=2}^nv^{\ell}\sum_{j=1}^{\ell-1}\sum_{k=\ell}^{n-1}b_{n-1,j,k}\sum_{m=j}^{\ell-1}u^m\\
&=\sum_{n\geq2}x^n\sum_{j=1}^{n-1}\sum_{\ell=j+1}^nv^{\ell}\sum_{k=\ell}b_{n-1,j,k}\frac{u^j-u^{\ell}}{1-u}\\
&=\frac{1}{1-u}\sum_{n\geq2}x^n\sum_{k=2}^{n-1}\sum_{j=1}^{k-1}\biggl(b_{n-1,j,k}u^j\frac{v^{j+1}-v^{k+1}}{1-v}-b_{n-1,j,k}\frac{(uv)^{j+1}-{uv}^{k+1}}{1-uv}\biggr)\\
&=\frac{1}{1-u}\sum_{n\geq3}x^n\biggl(\frac{vb_{n-1}(uv,1)-vb_{n-1}(u,v)}{1-v}-\frac{uvb_{n-1}(uv,1)-uvb_{n-1}(1,uv)}{1-uv}\biggr)\\
&=\frac{vx}{(1-v)(1-uv)}b(x;uv,1)-\frac{vx}{(1-u)(1-v)}b(x;u,v)+\frac{uvx}{(1-u)(1-uv)}b(x;1,uv). 
\end{align*}
By the above two expressions, multiplying both sides of~\eqref{rec:120,102} by $x^nu^mv^{\ell}$ and summing  over $1\leq m<\ell\leq n$ gives the functional equation~\eqref{fun:120,102}. 
\end{proof}

We are going to solve the functional equation~\eqref{fun:120,102}, which will result in the following expression for $b(x,1,1)$. 

\begin{theorem}We have 
\begin{equation}\label{exp:120,102}
b(x;1,1)=\frac{x^2+x^2\sqrt{1-4x}}{(x-1)((3x-1)\sqrt{1-4x}-4x^2+5x-1)}.
\end{equation}
Equivalently, 
$$
|\I_n(120,102)|=1+\sum_{i=1}^{n-1}{2i\choose i-1}.
$$
\end{theorem}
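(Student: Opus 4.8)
The plan is to solve the functional equation~\eqref{fun:120,102} by the kernel method, following verbatim the strategy of the proof of Theorem~\ref{thm:110,102} for the pattern pair $(110,102)$. The coefficient of the unknown $b(x;u,v)$ appearing on the right-hand side of~\eqref{fun:120,102} is $-vx/\bigl((1-u)(1-v)\bigr)$, so I would first impose the kernel condition that this coefficient equal $1$ --- its value on the left --- so that the $b(x;u,v)$ term is eliminated. Solving $1=-vx/\bigl((1-u)(1-v)\bigr)$ for $u$ yields
\begin{equation*}
u=u(x,v)=\frac{1-v+vx}{1-v},
\end{equation*}
the analogue here of~\eqref{eq:u1}.

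Next I would set $\alpha=\alpha(x,v):=uv$ and substitute this value of $u$ back into~\eqref{fun:120,102}. Because the chosen $u$ kills the $b(x;u,v)$ term, what remains is a relation expressing $b(x;\alpha,1)$ as a linear combination of $b(x;1,\alpha)$ together with explicit terms built from the Dyck-path last-step enumerator $D$ of~\eqref{alg:D}. To untangle the two unknowns I would solve $\alpha(x,w)=v$ for $w$; a short computation shows this is equivalent to the quadratic $(1-x)w^2-(1+v)w+v=0$, the counterpart of~\eqref{eq:w1}, whose two roots I call $w_1$ and $w_2$. Substituting $w_1$ and $w_2$ in turn produces a $2\times2$ linear system in $b(x;v,1)$ and $b(x;1,v)$; solving it eliminates $b(x;v,1)$ and gives a closed form for $b(x;1,v)$ in terms of $w_1,w_2$ and the $D$-values.

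To finish, I would insert the algebraic expression~\eqref{alg:D} for $D$ and use Vieta's relations $w_1+w_2=(1+v)/(1-x)$ and $w_1w_2=v/(1-x)$ to eliminate the individual roots, then set $v=1$. I expect the resulting radical expression to simplify to~\eqref{exp:120,102}, which is precisely the closed form already obtained for $a(x;1,1)$ in~\eqref{exp:110,102}. The second statement then follows because
\begin{equation*}
1+\sum_{n\geq1}|\I_n(120,102)|x^n=b(x;1,1)+\frac{1}{1-x}
\end{equation*}
coincides with the right-hand side of~\eqref{gen:sava}, forcing $|\I_n(120,102)|=1+\sum_{i=1}^{n-1}\binom{2i}{i-1}$.

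The main obstacle is entirely computational: the simplification after substituting $w_1,w_2$ and the nested radicals coming from $D$. Since the coefficients of $b(x;1,v)$ and the arguments of the $D$-values all involve $w_i$ and the surd $\sqrt{1-4(\cdot)}$, the system must be kept symmetric in $w_1,w_2$ so that only $w_1+w_2$ and $w_1w_2$ persist; checking that the survivor genuinely equals~\eqref{exp:120,102} is where the algebra is heaviest and most error-prone. A convenient safeguard is to confirm, along the way, that the low-order coefficients reproduce the displayed arrays $[b_{n,m,\ell}]$ and that the final series agrees with the already-verified~\eqref{exp:110,102}.
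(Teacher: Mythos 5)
Your proposal is correct and follows essentially the same route as the paper's proof: the same kernel condition $1=-vx/\bigl((1-u)(1-v)\bigr)$ giving $u=(1-v+vx)/(1-v)$, the same substitution $\alpha=uv$ (the paper's $\beta$), the same quadratic $(1-x)w^2-(1+v)w+v=0$ for the roots $w_1,w_2$, and the same elimination from the resulting $2\times 2$ system before setting $v=1$ and invoking~\eqref{gen:sava}. The only differences are cosmetic (which of $b(x;1,v)$, $b(x;v,1)$ you solve for, and your added numerical sanity checks), so nothing needs to change.
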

\begin{proof}
The proof is similar to that of Theorem~\ref{thm:110,102}.
We apply the kernel method and set the coefficients of $b(x;u,v)$ on both sides of~\eqref{fun:120,102} equal:
$$
1=-\frac{vx}{(1-u)(1-v)}.
$$
Solving this equation for $u$ gives 
\begin{equation}\label{eq:u2}
u=u(x,v)=\frac{1-v+vx}{1-v}
\end{equation}
Let us denote 
$$
\beta=\beta(x,v)=uv=\frac{v-v^2+v^2x}{1-v}.
$$
Substituting the expression for $u$ given by~\eqref{eq:u2} into~\eqref{fun:120,102},  and simplifying yields 
\begin{multline}\label{eq:beta}
b(x;1,\beta)=\frac{\beta-v}{\beta-\beta v}b(x;\beta,1)+\frac{(v-\beta)(1-\beta))}{v^2x(x-1)}D(\beta/v,vx)\\+\frac{(v-\beta)(1-\beta))}{v(1-x)}(D(1,\beta x)+1). 
\end{multline}
Observe that $\beta(x,w)=v$ whenever $w=w(x,v)$ is a root of the quadratic equation
\begin{equation}\label{eq:w2}
(x-1)w^2+(1+v)w-v=0.
\end{equation}
Replacing  $v$ by $w$ in~\eqref{eq:beta} gives
\begin{multline}\label{eq:beta2}
b(x;1,v)=\frac{v-w}{v-vw}b(x;v,1)+\frac{(w-v)(1-v))}{w^2x(x-1)}D(v/w,wx)\\+\frac{(w-v)(1-v))}{w(1-x)}(D(1, vx)+1). 
\end{multline}
substituting the two roots of the quadratic equation~\eqref{eq:w2} into~\eqref{eq:beta2} yields a system of two equations, which after eliminating $b(x;1,v)$ and then setting $v=1$ gives the desired  expression~\eqref{exp:120,102} for $b(x;1,1)$. This completes the proof of the theorem. 
\end{proof}
 
 \section{Classes 4140(D,E,F): Bell numbers and Stirling numbers}
 \label{sec:bell}

Let $\Pi_n$ be the set of all set partitions of $[n]$ and denote $\Pi_{n,k}$ the set of partitions in $\Pi_n$ with $k$ blocks. It is well known that 
$$
|\Pi_n|=B_n\quad\text{and}\quad|\Pi_{n,k}|=S(n,k),
$$
where $B_n$ is the $n$-th {\em Bell number} and $\{S(n,k)\}_{1\leq k\leq n}$ is the triangle of  {\em Stirling numbers of the second kind}. The Stirling numbers $S(n,k)$ satisfy the following basic recurrence (cf.~\cite[Sec.~1.4]{st}):
$$
S(n,k)=kS(n-1,k)+S(n-1,k-1).
$$
It was shown by Corteel et al.~\cite[Theorem~11]{cor} that 

\begin{theorem}[Corteel--Martinez--Savage--Weselcouch]\label{cor:bell}
The number of $011$-avoiding inversion sequences in $\I_n$ with $k$  zeros is $S(n,k)$. 
\end{theorem}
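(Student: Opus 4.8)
The plan is to show that $f(n,k):=|\{e\in\I_n(011):\zero(e)=k\}|$ satisfies the defining recurrence of the Stirling numbers of the second kind, $f(n,k)=f(n-1,k-1)+k\,f(n-1,k)$, and then to conclude $f(n,k)=S(n,k)$ by induction on $n$. The whole argument hinges on one clean structural fact about $011$-avoidance, which I would isolate first: \emph{an inversion sequence $e\in\I_n$ avoids $011$ if and only if every positive value appears at most once} (equivalently, all positive entries are distinct). Granting this, one has $\dist(e)=n-\zero(e)$ for every $e\in\I_n(011)$, which is precisely the identity that makes the recurrence close up nicely.

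To prove the characterization I would exploit that $e_1=0$ is forced for every inversion sequence, so the entry $0$ sits to the left of every positive entry. If some positive value $v$ occurred at two positions $j<k$, then $(e_1,e_j,e_k)=(0,v,v)$ would be a $011$ pattern; hence $011$-avoidance forbids repeated positive values. Conversely, any $011$ pattern $e_i<e_j=e_k$ must have its repeated value positive, since $e_i\ge 0$ would be violated otherwise, so a sequence with all positive entries distinct cannot contain $011$. This direction is immediate and is where I expect the argument to be essentially free once the $e_1=0$ observation is made.

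The recurrence then comes from appending a last letter and controlling the fibers of the truncation map $e\mapsto(e_1,\dots,e_{n-1})=:e'$. Truncation clearly preserves $011$-avoidance, so the fiber over $e'\in\I_{n-1}(011)$ consists of the admissible last letters $c$ with $0\le c\le n-1$; and the only new patterns a final entry can create are those ending at position $n$, i.e. $011$ patterns $e_i<e_j=c$ with $j<n$, witnessed (using $e_1=0$) exactly when $c>0$ already occurs in $e'$. Thus $c=0$ is always admissible, while a positive $c$ is admissible if and only if it is absent from $e'$. If $\zero(e')=k'$, then by the characterization $e'$ uses exactly $(n-1)-k'$ distinct positive values, all inside $\{1,\dots,n-1\}$, leaving exactly $k'$ admissible positive choices. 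Appending $0$ lands in the count $f(n,k'+1)$ and the $k'$ positive choices each land in $f(n,k')$, so summing over all preimages gives
\[
f(n,k)=f(n-1,k-1)+k\,f(n-1,k),
\]
the Stirling recurrence stated in the excerpt. With the base data $f(1,1)=1$ and $f(n,0)=0$ for $n\ge1$ (both forced by $e_1=0$), induction yields $f(n,k)=S(n,k)$. The one point requiring care — and the real crux rather than an obstacle — is the fiber count: verifying that the number of admissible last letters depends on $e'$ only through $\zero(e')$, which is exactly the payoff of the distinctness characterization. A direct bijection with set partitions of $[n]$ into $\zero(e)$ blocks would be an alternative, but the recurrence route sidesteps constructing and inverting such a map.
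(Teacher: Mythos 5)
Your proof is correct: the characterization of $011$-avoidance as distinctness of the positive entries, the fiber analysis of the truncation map (a last letter $c$ is admissible iff $c=0$ or $c>0$ is absent from $e'$, giving exactly $\zero(e')$ positive choices), the recurrence $f(n,k)=f(n-1,k-1)+k\,f(n-1,k)$, and the base cases $f(1,1)=1$, $f(n,0)=0$ all check out, and induction finishes the argument. However, this is a genuinely different route from the paper's. The paper does not argue by recurrence at all: it recalls that the original proof of Corteel et al.\ was a recursive bijection $\I_{n,k}(011)\to\Pi_{n,k}$, and then offers as its own proof an explicit bijection $\eta$ (observed by Chen): scanning $e$ from left to right, each zero entry $e_i=0$ opens a new block $\{i\}$, and each positive entry $e_i>0$ inserts $i$ into the block containing the element $e_i$ (well defined since $0<e_i<i$; injectivity rests on exactly your distinctness observation, because $e_i$ records the predecessor of $i$ in its block). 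Under $\eta$ the zeros of $e$ correspond to the blocks, so the refined count is immediate. What your route buys is self-containedness: nothing has to be constructed or inverted, and the only structural input is the distinctness characterization. What the paper's route buys is more information: $\eta$ identifies zeros with blocks element by element, and that kind of explicit, statistic-transporting correspondence is what the surrounding section exploits when it discusses which statistics are ``2nd Stirling'' over related avoidance classes (e.g.\ $\rep$ over $\I_n(010,101)$). It is worth noting that your fiber count is precisely the enumerative shadow of $\eta$: the positive values absent from $e'$ are exactly the maxima of the $\zero(e')$ blocks of $\eta(e')$, so appending $0$ corresponds to giving $n$ its own block, while appending an admissible positive $c$ corresponds to inserting $n$ into the block whose maximum is $c$ --- your recurrence is the Stirling recurrence transported through $\eta$ without ever naming $\eta$.
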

The proof of this result in~\cite{cor} was by constructing a recursive bijection from $\I_{n,k}(011)$ to $\Pi_{n,k}$, where $\I_{n,k}(011):=\{e\in\I_n(011):\zero(e)=k\}$. We note that there exists another  natural bijection $\eta: \I_{n,k}(011)\rightarrow\Pi_{n,k}$ observed in~\cite{chen}. Given $e\in\I_{n,k}(011)$, the partition $\eta(e)$ can be constructed for $i=1,2,\ldots,n$, step by step, as follows: 
\begin{itemize}
\item if $e_i=0$, then construct a new block $\{i\}$;
\item otherwise, $e_i>0$ and insert $i$ into the  block containing $e_i$. 
\end{itemize}
For example, if $e=(0,0,2,1,0,4)\in\I_{6,3}(011)$, then $\eta(e)=\{\{1,4,6\},\{2,3\},\{5\}\}\in\Pi_{6,3}$. 
The bijection $\eta$ provides another proof of Theorem~\ref{cor:bell} . 

A statistic whose distribution over a combinatorial object gives the Stirling numbers of the second kind is said to be  {\em 2nd Stirling} over such object. In this language, Theorem~\ref{cor:bell} is equivalent to the assertion that ``$\zero$'' is 2nd Stirling  over $\I_n(011)$. Since an inversion sequence contains the pattern $101$ or $110$ must also contain the pattern $011$, we have 

\begin{corollary}
For $n\geq1$, $\I_n(011,101)=\I_n(011)=\I_n(011,110)$. 
\end{corollary}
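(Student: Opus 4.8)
The plan is to reduce the corollary to a single combinatorial fact: every inversion sequence that contains $101$ or $110$ must also contain $011$. Granting this, the statement is immediate. Taking the contrapositive, any $e\in\I_n(011)$ automatically avoids both $101$ and $110$, so $\I_n(011)\subseteq\I_n(011,101)$ and $\I_n(011)\subseteq\I_n(011,110)$; the reverse inclusions are trivial, since imposing an extra forbidden pattern can only shrink the set. Hence all three sets coincide, and the enumerative consequence (that $\zero$ is 2nd Stirling over each class, so in particular all three are counted by Bell numbers) follows directly from Theorem~\ref{cor:bell}.

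To prove the containment fact, I would exploit the defining constraint $0\le e_i\le i-1$ of inversion sequences, which in particular forces $e_1=0$. Suppose first that $e$ contains $101$, witnessed by positions $i<j<k$ with $e_i=e_k>e_j$, and write $v=e_i=e_k$. Since $e_j\ge 0$ and $e_j<v$, we get $v\ge 1$, so $v>0$. Because $e_1=0\neq v$, the index $i$ cannot equal $1$, whence $1<i<k$. The triple of positions $1<i<k$ then satisfies $e_1=0<v=e_i=e_k$, which is precisely an occurrence of $011$.

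The argument for $110$ is identical in spirit. If $e$ contains $110$ via $i<j<k$ with $e_i=e_j>e_k$, set $v=e_i=e_j$; again $e_k\ge 0$ and $e_k<v$ give $v>0$, and $e_1=0$ forces $i\ge 2$, so $1<i<j$. Then $e_1=0<v=e_i=e_j$ realizes an $011$ pattern, completing the claim.

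The only point requiring care, and the closest thing to an obstacle, is verifying that the repeated value $v$ is strictly positive and that the first position genuinely lies to the left of the pattern. Both are guaranteed by the two structural features of inversion sequences used above: entries are nonnegative, and $e_1=0$. Without nonnegativity one could not conclude $v>0$, and without $e_1=0$ there would be no canonical smaller entry to prepend; together they make the reduction to $011$ go through in one line each.
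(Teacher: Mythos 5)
Your proof is correct and takes essentially the same approach as the paper: the paper simply asserts that any inversion sequence containing $101$ or $110$ must contain $011$ and concludes immediately, while you supply the short verification of that fact (using $e_1=0$ and nonnegativity of entries), which the paper leaves implicit.
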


It has been shown in~\cite[Sec.~6]{cjl} that $|\I_n(000,101)|=|\I_n(000,110)|=B_n$ and moreover, the statistic ``$\rmin$'' is 2nd Stirling over $|\I_n(000,110)|$. Unfortunately, all the five classical statistics defined in the introduction are not 2nd Stirling over $\I_n(000,101)$. 
We pose the following challenging open problem for further research. 

\begin{?}
Find a natural statistic whose distribution  over $\I_n(000,101)$ is 2nd Stirling. 
\end{?}

For a sequence $e\in\I_n$, let $\rep(e):=n-\dist(e)$ be the number of repeated entries of $e$. 
Martinez and Savage~\cite[Sec.~2.17.3]{ms} showed that $|\I_n(010,101)|=B_n$ by constructing a bijection between $\I_n(010,101)$ and $\I_n(011)$. We have the following refinement. 

\begin{theorem}\label{rep:stir}
The statistic ``$\rep$'' is  2nd Stirling over $\I_n(010,101)$. 
\end{theorem}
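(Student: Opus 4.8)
The plan is to show directly that $h_{n,r}:=|\{e\in\I_n(010,101):\rep(e)=r\}|$ satisfies the defining recurrence of the Stirling numbers of the second kind, namely $h_{n,r}=r\,h_{n-1,r}+h_{n-1,r-1}$, together with the initial condition $h_{1,1}=1$. Since these data characterize $S(n,r)$, this yields $h_{n,r}=S(n,r)$ for all $n$ and $r$, which is exactly the statement that $\rep$ is $2$nd Stirling over $\I_n(010,101)$. The first step is to record the structural description $\I_n(010,101)=\I_n(\neq,\neq,=)$: an inversion sequence avoids $010$ and $101$ precisely when there is no $i<j<k$ with $e_i=e_k\neq e_j$, i.e.\ when every value occupies a single contiguous run of entries. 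In particular, since $e_1=0$, all zeros form an initial block, and $\rep(e)=n-\dist(e)$ counts the entries lying beyond the first occurrence of their value.

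The core is a \emph{delete-the-last-entry} argument. If $e\in\I_n(010,101)$, then its prefix $e'=(e_1,\dots,e_{n-1})$ again avoids $010$ and $101$, and I would classify the extensions $e=(e',e_n)$. Because equal values must remain contiguous, the only value already present in $e'$ that may legally be appended is $e_{n-1}$ (the run of any other present value would fail to reach position $n-1$); this single \emph{repeat} extension leaves $\dist$ fixed and hence raises $\rep$ by $1$. Any remaining legal value of $e_n$ is one not occurring in $e'$; as $0=e_1$ is always present, such a value is a positive element of $\{1,\dots,n-1\}$ not used by $e'$, and appending it creates no forbidden pattern since the new endpoint equals no earlier entry. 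Each such \emph{new} extension raises $\dist$ by $1$ and therefore leaves $\rep$ unchanged.

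The key observation — which is what makes the recurrence Stirling rather than something else — is that the number of available new positive values is exactly $(n-1)-\dist(e')=\rep(e')$. Thus an $e'$ with $\rep(e')=r$ has precisely $r$ descendants of the same value $\rep=r$ (the new extensions) and one descendant with $\rep=r+1$ (the repeat). Reading this backwards, every $e$ with $\rep(e)=r$ arises either from a unique $e'$ with $\rep(e')=r-1$ via the repeat extension, or as one of the $r$ new extensions of some $e'$ with $\rep(e')=r$. This gives $h_{n,r}=h_{n-1,r-1}+r\,h_{n-1,r}$, and, together with $h_{1,1}=1$, the induction identifies $h_{n,r}$ with $S(n,r)$.

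I expect the only delicate points to be the bookkeeping of the degenerate cases — the all-zero prefix, and the fact that appending $0$ is never a genuinely new move but only the repeat of $e_{n-1}=0$ — and the clean verification that a present value can be appended only when its run ends at position $n-1$; once these are in place the crucial identity $(\text{new slots})=\rep(e')$ is immediate from the count $|\{1,\dots,n-1\}|-\dist(e')$. An alternative route would be to refine the Martinez--Savage bijection $\I_n(010,101)\to\I_n(011)$ and check that it transports $\rep$ to $\zero$, whence Theorem~\ref{cor:bell} would finish the job; but the self-contained recurrence above appears both shorter and more transparent, so I would present that.
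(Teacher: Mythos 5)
Your proof is correct, but it takes a genuinely different route from the paper's. The paper dispatches the theorem in a few lines: it reuses the Martinez--Savage map $e\mapsto e'$ (replace every repeated entry by $0$), observes that this is a $\dist$-preserving bijection from $\I_n(010,101)$ onto $\I_n(011)$, notes that $\rep$ and $\zero$ coincide on $\I_n(011)$ because there the positive entries are distinct, and then invokes Theorem~\ref{cor:bell}. Your argument is instead self-contained: from the characterization of $\I_n(010,101)$ as the sequences whose equal values form contiguous runs, deletion of the last entry shows that each $e'\in\I_{n-1}(010,101)$ admits exactly one legal extension repeating $e_{n-1}$ (raising $\rep$ by one) and exactly $(n-1)-\dist(e')=\rep(e')$ legal extensions by new positive values (preserving $\rep$), whence $h_{n,r}=r\,h_{n-1,r}+h_{n-1,r-1}$ with $h_{1,1}=1$; the delicate points you flag (appending $0$ is legal only as the repeat of $e_{n-1}=0$; a present value can be appended only if its run ends at position $n-1$) all check out. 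What each buys: the paper's route is shorter given the earlier machinery and makes the equidistribution with $\zero$ over $\I_n(011)$ explicit, while yours needs neither Theorem~\ref{cor:bell} nor the bijection of~\cite{ms}, reproves $|\I_n(010,101)|=B_n$ along the way, and implicitly sets up a recursive bijection to set partitions (the repeat extension corresponding to opening a new block, the $r$ new-value extensions to inserting $n$ into one of the $r$ existing blocks). One small slip to fix in the write-up: your parenthetical gloss that $\rep(e)$ ``counts the entries lying beyond the first occurrence of their value'' is off by one---that count equals $\rep(e)-1$, since $\dist$ excludes the value $0$, so the first zero is also charged to $\rep$; none of your actual computations use this gloss, so the argument itself is unaffected.
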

\begin{proof}
The mapping $e\mapsto e'$ constructed in~\cite{ms}, where $e_i'=0$ if $e_i\in\{e_1,e_2,\ldots,e_{i-1}\}$ and otherwise $e_i'=e_i$, is a bijection from $\I_n(010,101)$ and $\I_n(011)$ which preserves the statistic ``$\dist$''. Since each $011$-avoiding inversion sequence has distinct positive entries, the statistics ``$\rep$'' and ``$\zero$'' coincide on $\I_n(011)$. The result then follows from Theorem~\ref{cor:bell}. 
\end{proof}

\begin{theorem}
There exists a bijection $\phi:\I_n(010,101)\rightarrow\I_n(010,100)$ which preserves the triple $(\dist,\satu,\zero)$. 
Consequently, the statistic  ``$\rep$'' is  2nd Stirling over  $\I_n(010,100)$. 
\end{theorem}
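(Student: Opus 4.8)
The plan is to build $\phi$ by an explicit, local modification of the block structure, after first pinning down how the two pattern pairs constrain an inversion sequence. I would record the following structural descriptions. Since $e_1=0$ and any $0$ occurring after a positive entry would create a $010$ together with $e_1$, in both classes the zero entries form a prefix $0^{a}$ with $a=\zero(e)$. For $\I_n(010,101)$ the two forbidden patterns together say exactly that equal entries may never be separated by a different entry, so each value occupies one contiguous run; thus $e=0^{a}c_1^{b_1}\cdots c_r^{b_r}$ with $c_1,\dots,c_r$ distinct and positive and $r=\dist(e)$. For $\I_n(010,100)$ I would combine the two avoidances into the single clean statement that every value $v$ occurring at least twice satisfies $v=\max\{e_1,\dots,e_{q}\}$, where $q$ is the last position with $e_q=v$; equivalently, all entries before the last occurrence of a repeated value are $\le v$. (Avoiding $100$ forces this for the part before the first occurrence, avoiding $010$ for the part between the first and last occurrences.)

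With these descriptions in hand I would define $\phi$ as a promotion map. Given $e=0^a c_1^{b_1}\cdots c_r^{b_r}\in\I_n(010,101)$, set $M_{i-1}=\max\{c_1,\dots,c_{i-1}\}$ and call a block $i$ \emph{defective} if $b_i\ge 2$ and $c_i<M_{i-1}$, i.e.\ $c_i$ is a repeated value that is not a left-to-right maximum of $c_1\cdots c_r$. For each defective block replace $c_i^{b_i}$ by $c_i\,M_{i-1}^{\,b_i-1}$ (keep the first copy, raise the remaining copies to the current running maximum), leaving the prefix and all other blocks untouched; this yields $\phi(e)$. I would then check that $\phi(e)\in\I_n(010,100)$: a raised entry at position $p$ takes the value $M_{i-1}=c_{i'}$ of an earlier block, whose first occurrence is at a position smaller than $p$, so $M_{i-1}\le p-2<p-1$ and the inversion-sequence inequality is preserved; and after the promotions every repeated value is a left-to-right maximum with nothing larger between its occurrences, which is exactly the characterization above.

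The preservation of the triple $(\dist,\satu,\zero)$ should then be essentially immediate, and this is the feature that makes the promotion map the natural choice. The entries $\phi$ alters are trailing copies inside a block; such positions are never $0$ and are never saturated (a trailing position $p$ carries a value strictly smaller than $p-1$, both before and after promotion), while each altered entry is changed into a value $M_{i-1}$ that already occurs in $e$, and the surviving copy $c_i$ is retained. Hence the set of zero positions, the set of saturated positions, and the set of distinct values are literally unchanged, giving the invariance of $\zero$, $\satu$ and $\dist$ simultaneously.

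Finally I would establish that $\phi$ is a bijection by exhibiting the inverse, a demotion map: in $f\in\I_n(010,100)$ every value occurring in two or more maximal runs has each of its non-initial runs immediately preceded by a length-one run of a strictly smaller value $c$, and one demotes that run back to $c$, merging it into the preceding block. The hard part will be showing that this demotion is well defined on all of $\I_n(010,100)$ --- equivalently, that the image of $\phi$ is the entire target class --- which amounts to verifying from the structural characterization that the required ``length-one predecessor'' run structure always holds (the key point being that a repeated smaller value sitting before a later run would itself create a forbidden $100$). Granting this, $\phi$ and the demotion map are mutually inverse, so $|\I_n(010,100)|=|\I_n(010,101)|$; and because $\phi$ preserves $\dist$ and $\rep=n-\dist$, Theorem~\ref{rep:stir} transfers the $2$nd Stirling property of $\rep$ from $\I_n(010,101)$ to $\I_n(010,100)$, which is the asserted consequence.
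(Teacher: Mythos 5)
Your proposal is correct and is essentially the paper's own proof: your block-wise ``promotion'' map (replace the trailing copies $c_i^{b_i-1}$ of a repeated, non-left-to-right-maximal value by the running maximum $M_{i-1}$) is exactly the paper's $\phi$, which sets $e_i'=\max\{e_1,\ldots,e_{i-2}\}$ whenever $e_{i-1}=e_i<\max\{e_1,\ldots,e_{i-2}\}$ and $e_i'=e_i$ otherwise. The well-definedness, statistic-preservation and demotion-inverse arguments you spell out (including the key observation that a repeated smaller value before a later run would force a $100$) are precisely the verifications the paper dismisses as ``routine to check.''
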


\begin{proof}
Note that $e\in\I_n$ is $(010,101)$-avoiding if and only if whenever $e_i=e_j$ for some $i<j$, then $e_k=e_i$ for every $i\leq k\leq j$.   For $e\in\I_n(010,101)$, define $\phi(e)=e'$ where  $e_i'=\max\{e_1,e_2,\ldots,e_{i-2}\}$ if $e_{i-1}=e_i<\max\{e_1,e_2,\ldots,e_{i-2}\}$ and otherwise $e_i'=e_i$. For instance, if $$e=(0,0,0,0,4,3,2,2,2,5,1,1)\in\I_{12}(010,101),$$ then 
$$\phi(e)=(0,0,0,0,4,3,2,4,4,5,1,5)\in\I_{12}(010,100).$$
It is routine to check that $\phi$ is a bijection between $\I_n(010,101)$ and $\I_n(010,100)$ that preserves the triple $(\dist,\satu,\zero)$. The second statement then follows from the bijection $\phi$ and Theorem~\ref{rep:stir}. 
\end{proof}
 
\section{Class 5798B: A bijection to weighted ordered trees}
\label{sec:tree}
The integer sequence 
$$
\{1,1,2,6,21,80,322,1347,5798,25512,\ldots\}
$$
with algebraic generating function 
$$
A(t)=1+\frac{tA(t)}{1-tA(t)^2}
$$
appearing as A106228 in the OEIS~\cite{oeis} enumerates many interesting combinatorial objects, including 
\begin{itemize}
\item $(4123,4132,4213)$-avoiding permutations, proved by Albert,  Homberger,  Pantone, Shar and Vatter~\cite{ahpsv};
\item $(101,102)$-avoiding inversion sequences,  conjectured by Martinez and Savage~\cite{ms} and verified algebraically  by Cao, Jin and Lin~\cite{cjl};
\item weighted ordered trees, where each {\em interior vertex} (non-root, non-leaf) is weighted by a positive integer less than or equal to its outdegree (see~\cite[A106228]{oeis}). 
\end{itemize}
In this section, we will prove that this integer sequence also counts $(101,021)$-avoiding inversion sequences. This is achieved via a new constructed ``type''-preserving  bijection from ordered trees of $n$ edges to Dyck paths of length $n$.

Let ${\bf t}=(t_1,t_2,\ldots,t_k)$ be a sequence of positive integers  with $t_1+t_2+\cdots+t_k=n$. A Dyck path $D=h_1h_2\cdots h_n$ of length $n$ is said to have {\em type ${\bf t}$} if 
$$
h_i\neq h_{i+1}\Longleftrightarrow i\in\{t_1,t_1+t_2,\ldots,t_1+t_2+\cdots+t_{k-1}\}.
$$
 In other words, this Dyck path begins with $t_1$ east steps followed by at least one north step,  and then continues with $t_2$ east steps followed by at least one north step, and then so on. 
 For example, the Dyck path in the right-bottom side of Fig.~\ref{tree:dyck} has type $(3,1,2,1)$. 
 
We next introduce the type for ordered trees. For the basic definitions and terminology concerning ordered trees (or plane trees), see~\cite[Sec.~1.5]{st2}. In an ordered tree $T$, the outdegree of a vertex $v\in V(T)$, denoted $\deg_v$, is the number of descendants of $v$. We will order all vertices of $T$ by using the {\em depth-first order} (also known as {\em preorder}). For example, the preorder of the vertices of the ordered tree in the right-top side of Fig.~\ref{tree:dyck} are $a,b,c,d,e,f,g,h$. If $v_1$ is the root of $T$ and $v_2,v_3,\ldots,v_k$ are the interior vertices of $T $ in preorder, then we say $T$ has type $(\deg_{v_1},\deg_{v_2},\ldots,\deg_{v_k})$. For instance, the tree in our running example has type $(\deg_a,\deg_c,\deg_d,\deg_g)=(3,1,2,1)$. Let $\T_n$ be the set of all ordered trees with $n$ edges. It is well known that $|\T_n|=C_n$ and there are already several bijections between $\T_n$ and $\D_n$; see the monograph~\cite{st2}  of  Stanley on Catalan number.  The main result of this section is a construction of a bijection from $\T_n$ to $\D_n$, which seems new to the best of our knowledge. 
 
  \begin{figure}
\begin{tikzpicture}[scale=0.4]
\draw[-] (3,14) to (1,12) (3,14) to (3,12) (3,14) to (5,12);
\node at (3,14) {\circle*{4}};
\node[color=red] at (1,12) {\circle*{4}};
\node[color=red] at (3,12) {\circle*{4}};\node at (3,12) {\circle{5.5}};
\node[color=red] at (5,12) {\circle*{4}};

\node at (8,12) {$\longrightarrow$};
\node at (18,12) {$\longrightarrow$};
\node at (28,12) {$\longrightarrow$};
\node at (33,8.7) {$\big\downarrow$};
\node at (33.8,8.7) {$\Phi$};
\draw[-] (13,14) to (11,12) (13,14) to (13,12) (13,14) to (15,12) (13,12) to (13,10);
\node at (13,14) {\circle*{4}};
\node at (11,12) {\circle*{4}};
\node at (13,12) {\circle*{4}};
\node[color=red] at (15,12) {\circle*{4}};\node at (13,10) {\circle{5.5}};
\node[color=red] at (13,10) {\circle*{4}};
\draw[-] (23,16) to (21,14) (23,16) to (23,14) (23,16) to (25,14) (23,14) to (23,12)
(23,12) to (21,10) (23,12) to (25,10);
\node at (23,16) {\circle*{4}};
\node at (21,14) {\circle*{4}};
\node at (23,14) {\circle*{4}};
\node[color=red] at (25,14) {\circle*{4}};
\node at (23,12) {\circle*{4}};
\node[color=red] at  (21,10) {\circle*{4}};
\node[color=red] at  (25,10) {\circle*{4}};\node at  (25,14) {\circle{5.5}};
\draw[-] (33,16) to (31,14) (33,16) to (33,14) (33,16) to (35,14) (33,14) to (33,12)
(33,12) to (31,10) (33,12) to (35,10) (35,14) to (35,12);
\node at (33,16) {\circle*{4}}; \node at (33.5,16.2) {$a$};
\node at (31,14) {\circle*{4}}; \node at (30.5,14) {$b$};
\node at (33,14) {\circle*{4}};  \node at (32.5,14) {$c$};
\node at (35,14) {\circle*{4}};  \node at (35.5,14) {$g$};
\node at (33,12) {\circle*{4}};  \node at (32.5,12.2) {$d$};
\node at  (31,10) {\circle*{4}}; \node at (31.5,10) {$e$};
\node at  (35,10) {\circle*{4}}; \node at (35.5,10) {$f$};
\node[color=red] at  (35,12) {\circle*{4}}; \node at (35.5,12) {$h$};

\draw[step=1,color=gray](0,0) grid (7,7);
\draw[color=blue](0,0)--(7,7);
\draw[very thick,color=red](0,0)--(3,0);

\node at (8.4,3.5) {$\longrightarrow$};
\node at (18.4,3.5) {$\longrightarrow$};
\node at (28.4,3.5) {$\longrightarrow$};
\draw[step=1,color=gray](10,0) grid (17,7);
\draw[color=blue](10,0)--(17,7);
\draw[very thick](10,0)--(13,0);
\draw[very thick,color=red](13,0)--(13,2) (13,2)--(14,2);
\draw[step=1,color=gray](20,0) grid (27,7);
\draw[color=blue](20,0)--(27,7);
\draw[very thick](20,0)--(23,0) (23,0)--(23,2) (23,2)--(24,2);
\draw[very thick,color=red](24,2)--(24,3)  (24,3)--(26,3);
\draw[step=1,color=gray](30,0) grid (37,7);
\draw[color=blue](30,0)--(37,7);
\draw[very thick](30,0)--(33,0) (33,0)--(33,2) (33,2)--(34,2) (34,2)--(34,3) (34,3)--(36,3);
\draw[very thick,color=red](36,3)--(36,6)  (36,6)--(37,6)--(37,7);
\end{tikzpicture}
\caption{An example of the construction of  the bijection $\Phi$\label{tree:dyck}}
\end{figure}
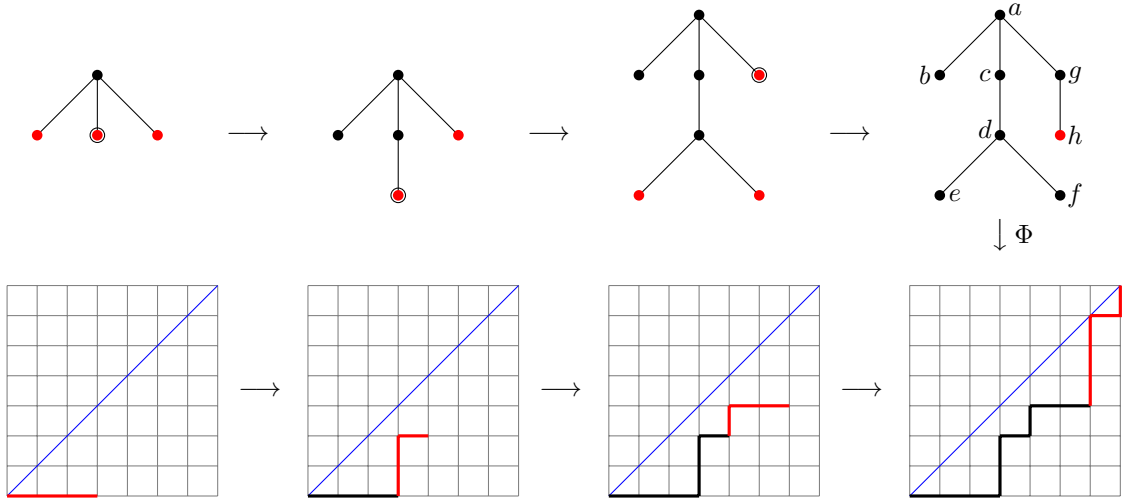

\begin{theorem}\label{thm:tree}
There exists a type-preserving bijection $\Phi$ from $\T_n$ to $\D_n$.
\end{theorem}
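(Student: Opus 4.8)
The plan is to define $\Phi$ explicitly by reading off $T\in\T_n$ in preorder, and then to invert it through the classical encoding of a plane tree by its preorder degree sequence. Write the non-leaf vertices of $T$ in preorder as $v_1,v_2,\ldots,v_k$ (so $v_1$ is the root and $(\deg_{v_1},\ldots,\deg_{v_k})=(t_1,\ldots,t_k)$ is the type of $T$), and let $p_j$ be the position of $v_j$ in the global preorder listing of all $n+1$ vertices. I would set $\Phi(T)$ to be the Dyck path $h_1\cdots h_n$ whose east steps are grouped into $k$ runs, the $j$-th run consisting of $t_j$ east steps all of height $H_j:=p_j-1$; equivalently each edge of $T$ becomes an east step, edges are ordered first by the preorder rank of their parent and then left-to-right, and the height of the run emitted at $v_j$ records how many vertices precede $v_j$ in preorder. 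The figure's iterative ``growth'' of the path is exactly this run-by-run emission, processing $v_1,\ldots,v_k$ in preorder.

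The first task is to check that $\Phi(T)\in\D_n$ and that it has type $(t_1,\ldots,t_k)$. Type preservation is immediate, since the run lengths are $t_1,\ldots,t_k$ by construction and the heights strictly increase between runs because $p_1<p_2<\cdots<p_k$. Writing $s_j:=t_1+\cdots+t_j$ (with $s_0:=0$), the first east step of the $j$-th run sits at index $s_{j-1}+1$, so the Dyck condition $h_i\le i-1$ reduces to the single family of inequalities $H_j\le s_{j-1}$, that is $p_j-1\le s_{j-1}$. This is the combinatorial heart of the forward direction, and I would prove it by a counting argument: every vertex strictly preceding $v_j$ in preorder, other than the root, is a child of a non-leaf vertex that also precedes $v_j$, hence a child of one of $v_1,\ldots,v_{j-1}$; adjoining $v_j$ itself (also a child of some $v_i$ with $i<j$) exhibits $p_j-1$ distinct children among the $s_{j-1}$ edges emanating from $v_1,\ldots,v_{j-1}$, which gives $p_j-1\le s_{j-1}$.

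For bijectivity I would build the inverse through the \L{}ukasiewicz/preorder encoding. Given $D\in\D_n$ of type $(t_1,\ldots,t_k)$ with run heights $0=H_1<\cdots<H_k$ satisfying $H_j\le s_{j-1}$, assign to the preorder position $H_j+1$ the degree $t_j$ and to every other position the degree $0$; this is the candidate preorder degree sequence of the sought tree. The key point is that this word is a valid \L{}ukasiewicz word, i.e. every proper prefix has nonnegative partial sum of $(\deg-1)$: for $m\le n$ this amounts to $s_{J}\ge m$ where $J$ is the largest index with $H_J+1\le m$, and it follows at once from $H_{J+1}\le s_{J}$ (and from $s_k=n$ when $J=k$). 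The classical correspondence then returns a unique plane tree whose non-leaf vertices in preorder occupy the prescribed positions with the prescribed degrees, and one checks directly that $\Phi$ sends it back to $D$. I expect the main obstacle to be packaging these two mirror-image inequalities—$p_j-1\le s_{j-1}$ on the tree side and the prefix (ballot) condition on the path side—into a single clean statement, and verifying carefully that the preorder reconstruction genuinely inverts the run-by-run construction rather than merely matching types.
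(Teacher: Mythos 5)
Your proposal is correct, and it reaches the theorem by a genuinely different route than the paper. The paper defines $\Phi$ \emph{recursively}: it repeatedly deletes the descendants of the last (in preorder) interior vertex $v$ and appends a run of $\deg_v$ east steps whose height is the previous last height plus the rank of $v$ among the leaves following the previous interior vertex; the key lemma is that $\Phi$ preserves the statistic $\capp$ (capacity), proved by induction, after which type preservation and injectivity are checked from the recursion and bijectivity is deduced from $|\T_n|=|\D_n|=C_n$. Your map is in fact the closed form of that recursion (unwinding the paper's rule $h_n=h'_\ell+h$ shows the run emitted at a non-leaf vertex sits at height equal to its preorder position minus one, exactly your $H_j=p_j-1$; on the tree of Fig.~2 both give $0,0,0,2,3,3,6$), but your proof is organized quite differently: the Dyck condition is verified by the clean counting inequality $p_j-1\le t_1+\cdots+t_{j-1}$ (every vertex preceding $v_j$, and $v_j$ itself, is a child of an earlier non-leaf vertex), and bijectivity is obtained \emph{constructively} by exhibiting the inverse through the \L{}ukasiewicz/preorder-degree-sequence encoding, with the ballot condition for the degree word following from the same inequality read backwards. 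What each approach buys: the paper's recursion makes the capacity invariant natural and matches the step-by-step figure, but leans on the Catalan cardinality to get surjectivity; your explicit description makes type preservation immediate, produces the inverse map outright, and is self-contained in that it never appeals to $|\T_n|=|\D_n|$. The only point worth making explicit if this were written up is the observation (which you use implicitly) that any Dyck path has weakly increasing east-step heights, so a path of type $(t_1,\ldots,t_k)$ automatically has strictly increasing run heights $H_1<\cdots<H_k$ with $H_1=0$ --- this is what guarantees your inverse is defined on all of $\D_n$.
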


Recall that in Section~\ref{sec:4.1},  the geometric representation $\mathcal{H}$ is a bijection between $\I_n(021)$ and $\A_n$. This $h$ reduces to a bijection between $\I_n(101,021)$ and colored Dyck paths in $\A_n$ such that red east steps of the same height are connected. The latter objects are readily seen to be in one-to-one correspondence with Dyck paths in $\D_n$ such that each first appearance of east steps of the same height is weighted by a positive integer less than or equal to the number of east steps of that height. The later objects are  in bijection with  weighted ordered trees where each  interior vertex is weighted by a positive integer less than or equal to its outdegree because of Theorem~\ref{thm:tree}. This proves the following enumerative result. 

\begin{corollary}
Inversion sequences avoiding the pair $(101,021)$ are counted by the integer sequence A106228. Equivalently, $|\I_n(101,102)|=|\I_n(101,021)|$ for $n\geq1$. 
\end{corollary}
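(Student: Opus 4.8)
The plan is to prove the first (enumerative) assertion by a chain of bijections carrying $(101,021)$-avoiding inversion sequences through colored Dyck paths and weighted Dyck paths to the weighted ordered trees counted by A106228, using the geometric representation $\mathcal{H}$ of Section~\ref{sec:4.1} and the type-preserving bijection $\Phi$ of Theorem~\ref{thm:tree}. The equivalent reformulation $|\I_n(101,102)|=|\I_n(101,021)|$ is then immediate, since $|\I_n(101,102)|$ is already known to equal A106228 (conjectured in~\cite{ms} and verified in~\cite{cjl}).

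First I would pin down the image of $\I_n(101,021)$ under $\mathcal{H}$. Recall that $\mathcal{H}$ is a bijection from $\I_n(021)$ onto $\A_n$, sending positive entries to black east steps (at their value) and zero entries to red east steps (at the current maximum height). The key observation is that inside a $021$-avoiding sequence the middle letter of every $101$ pattern must be $0$: if $e_i=e_k=m>0$ and $e_j<m$ with $i<j<k$, then weak monotonicity of the positive entries forces $e_j=0$ and forces every positive entry on $(i,k)$ to equal $m$, so the running maximum stays $m$ and the red step at position $j$ sits at height exactly $m$. Hence a $101$ pattern is precisely a black--red--black configuration among the east steps of one common positive height $m$, and $101$-avoidance is equivalent to forbidding such subwords at every positive height. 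Since condition (b) of $\A_n$ already makes the first step of each positive height black, this collapses to the requirement that, at each positive height, the red steps form one contiguous block at the end; that is, the image is exactly the set of colored paths in $\A_n$ in which the red east steps of every height are connected.

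Next I would convert this colored-path condition into a weight. Because a Dyck path has weakly increasing east-step heights, all east steps of a given height $m$ occur consecutively, forming exactly one run; if that run has length $t$, then the admissible colorings are $B^bR^{t-b}$ with $1\le b\le t$, giving exactly $t$ choices at each positive height and a single forced coloring (all red) at height $0$. Recording $b$ as a weight in $[1,t]$ identifies these objects with Dyck paths of length $n$ in which the first east step of each positive height is weighted by an integer between $1$ and the number of east steps of that height. In terms of the type $(t_1,\dots,t_k)$ of the underlying Dyck path, the total weight of a shape is $\prod_{j=2}^{k}t_j$, the height-$0$ run $t_1$ carrying no weight. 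Applying $\Phi$ from Theorem~\ref{thm:tree}, $t_1$ becomes the outdegree of the root and $t_2,\dots,t_k$ the outdegrees of the interior vertices in preorder, so $\prod_{j=2}^{k}t_j$ equals the number of ways to weight the interior vertices of $\Phi^{-1}(D)$ by integers not exceeding their outdegrees. Summing over all shapes matches the enumeration of weighted ordered trees, which are counted by A106228, proving the first assertion; the second then follows as explained above.

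I expect the main obstacle to be the precise justification of the reduction in the second paragraph: verifying that within $\I_n(021)$ the middle letter of any $101$ is zero and lies at height $m$, and that after imposing condition (b) the rule ``no black--red--black at each positive height'' is exactly ``reds connected.'' The remaining bookkeeping—that weakly increasing heights make each height a single run, that the per-height count is the run length, and that the unweighted height-$0$ run corresponds under $\Phi$ to the unweighted root—is routine once the type-preserving property of $\Phi$ is available.
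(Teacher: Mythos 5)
Your proposal is correct and takes essentially the same route as the paper's own proof: restrict $\mathcal{H}$ to identify $\I_n(101,021)$ with the colored Dyck paths in $\A_n$ whose red east steps at each height are connected, re-encode these as Dyck paths with the first east step of each positive height weighted by an integer at most the run length, transport them to weighted ordered trees via the type-preserving bijection $\Phi$ of Theorem~\ref{thm:tree}, and conclude via the known result $|\I_n(101,102)|=$ A106228 from~\cite{cjl}. The only difference is that you spell out the verifications (the middle letter of any $101$ in a $021$-avoiding sequence is a zero whose red step sits at exactly the height of the outer letters, and the admissible colorings of a run of length $t$ are exactly $B^bR^{t-b}$ with $1\le b\le t$) that the paper compresses into ``reduces to'' and ``readily seen.''
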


The construction of $\Phi$ will be recursive based on the number of interior vertices of rooted trees. For an ordered tree $T\in\T_n$ with $v$ as the latest (in preorder) interior vertex, define $\capp(T)$ to be the number of vertices of $T$ after $v$ (in preorder), called the {\em capacity of $T$}. While for a Dyck path (or just a word of integers) $D=h_1h_2\cdots h_n$, define $\capp(D)=n-h_n$ the {\em capacity of $D$}. We now construct $\Phi(T)$ recursively as follows:
\begin{itemize}
\item if $T$ has no interior vertex, then define $\Phi(T)=00\cdots0$, the Dyck path consisting of $n$  east steps of height $0$;
\item otherwise, suppose $T$ has at least one interior vertex and $v$ is the latest (in preorder) interior vertex; let $T'$ be the ordered tree obtained by deleting all descendants of $v$ in $T$ and let $\Phi(T')=h'_1h'_2\cdots h'_{\ell}$, where $\ell=n-\deg_v$; let $v'$ be the latest interior vertex of $T'$ (if $T'$ has no interior vertex, then set $v'$ to be the root of $T'$) and suppose that $v$ is the $h$-th leaf after $v'$ (in preorder) in $T'$ with $1\leq h\leq \capp(T')$; define 
$$\Phi(T)=h'_1h'_2\cdots h'_{\ell}h_{\ell+1}h_{\ell+2}\cdots h_n,$$
where $h_{\ell+1}=h_{\ell+2}=\cdots=h_n=h'_{\ell}+h$. 
\end{itemize}
See Fig.~\ref{tree:dyck} for an example of the construction of $\Phi(T)$. 

The following crucial property of $\Phi$ can be checked routinely  by induction on the number of interior vertices of $T$.

\begin{lemma}
The mapping $\Phi$ is capacity preserving, namely, $\capp(T)=\capp(D)$ with $D=\Phi(T)$ considered as a word of integers. 
\end{lemma}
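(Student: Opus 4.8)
The plan is to induct on the number of interior vertices of $T$, mirroring the recursive definition of $\Phi$. For the base case $T$ has no interior vertex, so $T$ consists of a root together with $n$ leaf-children; under the stated convention $v'$ is the root, every one of the $n$ leaves comes after the root in preorder, giving $\capp(T)=n$. On the Dyck side $\Phi(T)=0\cdots0$ has last height $0$, so $\capp(\Phi(T))=n-0=n$, and the base case holds. I would also record here the degenerate consistency fact that whenever an ordered tree has no interior vertex its capacity equals its number of edges, matching $\capp(0\cdots0)$.

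For the inductive step, suppose $T$ has at least one interior vertex and let $v$ be the latest one in preorder, with $T'$, $v'$, $\ell=n-\deg_v$, $h$ and $h'_1\cdots h'_\ell=\Phi(T')$ as in the construction of $\Phi$. The key structural observation I would establish first is that, because $v$ is the latest interior vertex of $T$, all descendants of $v$ are leaves, hence are exactly its $\deg_v$ children, and they occupy the $\deg_v$ positions immediately following $v$ in the preorder of $T$. Deleting this subtree to form $T'$ leaves the relative preorder of all other vertices unchanged and turns $v$ into a leaf of $T'$.

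Next I would compute $\capp(T)$ by splitting the vertices lying after $v$ in the preorder of $T$ into two groups: the $\deg_v$ children of $v$, and the vertices that already lay strictly after $v$ in the preorder of $T'$. Since $v'$ is the latest interior vertex of $T'$, every vertex after $v'$ in $T'$ is a leaf, there are $\capp(T')$ of them, and $v$ is the $h$-th such leaf; hence exactly $\capp(T')-h$ of them lie strictly after $v$. Applying the induction hypothesis $\capp(T')=\capp(\Phi(T'))=\ell-h'_\ell$ then gives $\capp(T)=\deg_v+(\ell-h'_\ell-h)=n-h'_\ell-h$. On the Dyck side $h_n=h'_\ell+h$ by construction, so $\capp(\Phi(T))=n-h_n=n-h'_\ell-h$, and the two quantities agree, completing the induction.

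The main obstacle I anticipate is the preorder bookkeeping in the inductive step: one must verify carefully that deleting the subtree rooted at $v$ preserves both the identity of the latest interior vertex $v'$ of $T'$ and the preorder ranking of the leaves after $v'$, so that ``the $h$-th leaf after $v'$'' carries the same meaning in $T'$ that the reconstruction of $T$ relies on. Matching the convention $v'=\text{root}$ in the subcase where $T'$ has no interior vertex (so $h'_\ell=0$) against $\capp(T')=\ell$ is a small but necessary check folded into the induction.
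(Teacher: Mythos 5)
Your proof is correct and follows exactly the route the paper indicates: the paper merely asserts that the lemma ``can be checked routinely by induction on the number of interior vertices of $T$,'' and your argument carries out precisely that induction, with the key identity $\capp(T)=\deg_v+(\capp(T')-h)=n-h'_\ell-h=n-h_n$ and the correct handling of the convention $v'=\text{root}$ when $T'$ has no interior vertex. Nothing is missing; you have simply supplied the details the paper leaves to the reader.
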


It follows from the above lemma that the resulting word $\Phi(T)$ is a Dyck path and so the mapping $\Phi$ is well-defined. It can be checked from the recursive construction that $\Phi$ is a type-preserving injection and thus is a bijection  due to the cardinality reason.

\section{Class 11624 and indecomposable set partitions}
\label{sec:irre}

A set partition of $[n]$ is a {\em indecomposable partition} (or {\em unsplitable partition}) if the only subset of its blocks that partitions an initial segment of $[n]$ is the full set of all blocks. For our purpose, we  arrange the blocks of a partition in decreasing order of their smallest entry, as in $\{7\}\{4\}\{3,5\}\{1,2,6\}$. Since the last three blocks form a partition of $[6]$, this partition is not indecomposable. The indecomposable partitions can be used to index a free generating set of symmetric functions in noncommutative variables (see~\cite{clw}). 

The indecomposable partitions are counted by the integer sequence~\cite[A074664]{oeis}, which has several other combinatorial interpretations, including irreducible set partitions and non-nesting permutations. In this section, we prove that this integer sequence also counts $(101,110)$-avoiding inversion sequences. In order to achieve this, we need to introduce a refinement of this integer sequence. 

The triangle $\{T_{n,k}\}_{1\leq k\leq n}$,  appearing as A086211 in OEIS~\cite{oeis}, are defined by 
\begin{equation}
T_{n,k} =T_{n-1,k-1} +kT_{n-1,k} + \sum_{j=k+1}^{n-1}T_{n-1,j} 
\end{equation}
with $T_{1,i}=\delta_{1,i}$.  The first few values of $T_{n,k}$ are listed in Table~\ref{val:tnk}.
 \begin{table}
\label{Lnkz}
 \[ \begin{tabular}{c|cccccc}
$n\backslash k$ & \,\,$1$ & $2$ & $3$ & $4$ & $5$
\\
\hline
 $1$\,\, &\,\,$1$\\ 
$2$\,\, & \,\,$1$ & $1$  \\
$3$\,\, & \,\,$2$ &$3$&$1$  \\
$4$\,\, &\,\,$6$ &  $9$ &$6$ &$1$  \\
$5$\,\, &\,\,$22$ &  $31$ &$28$ &$10$ & $1$  \\
\end{tabular} \]
\caption{The first values of $T_{n,k}$\label{val:tnk}}
\end{table}
As was proven  by Callan~\cite{cal},  $T_{n,k}$ counts the  indecomposable partitions of $[n+1]$ such that $n+1$ lies in the $k$-th block. For example, $T_{3,2}=3$ and all the indecomposable partitions of $[4]$ such that $4$ lies in the second block are $\{2\}\{1,3,4\}$, $\{2,3\}\{1,4\}$ and $\{3\}\{1,2,4\}$. 
The following result provides another interpretation for $T_{n,k}$. 

\begin{theorem}
For $1\leq k\leq n$, we have 
$$
|\{e\in\I_n(101,110):\zero(e)=k\}|=T_{n,k}. 
$$
In particular, the cardinality of $\I_n(101,110)$  equals the number of indecomposable partitions of $[n+1]$. 
\end{theorem}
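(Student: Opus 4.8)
The plan is to introduce the refined counting function
\[
L_{n,k}:=|\{e\in\I_n(101,110):\zero(e)=k\}|
\]
and to prove that $L_{n,k}=T_{n,k}$ for all $1\le k\le n$. Granting this, the final assertion follows at once: summing over $k$ gives $|\I_n(101,110)|=\sum_k T_{n,k}$, which by~\cite{cal} is the number of indecomposable partitions of $[n+1]$. Since $T_{n,k}$ is \emph{defined} by the recurrence $T_{n,k}=T_{n-1,k-1}+kT_{n-1,k}+\sum_{j=k+1}^{n-1}T_{n-1,j}$ together with $T_{1,i}=\delta_{1,i}$, it suffices to check that $L_{n,k}$ satisfies the same recurrence and the same boundary value $L_{1,1}=1$; the latter is immediate from $\I_1=\{(0)\}$.

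The first step is a structural description of $\I_n(101,110)$ on which everything else rests. I would prove: an inversion sequence $e$ avoids both $101$ and $110$ if and only if, for every value $v$ occurring at least twice in $e$, \emph{all} entries lying to the right of the first occurrence of $v$ are $\ge v$. Indeed, a smaller entry strictly between two copies of $v$ is exactly a $101$, while a smaller entry after the second copy of $v$ is exactly a $110$; conversely these two conditions forbid both patterns. Two consequences drive the enumeration. First, taking $v$ to be a repeated positive value shows that no $0$ can occur after the first occurrence of a repeated positive value, so all zeros of $e$ precede that point. Second, appending a new last entry equal to $0$ to a shorter avoider is legitimate precisely when that avoider has pairwise distinct positive entries, since otherwise the repeated positive value together with the new $0$ creates a $110$.

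With this in hand, the heart of the proof is to realize the three terms of the recurrence by a decomposition of $L_{n,k}$ in terms of length-$(n-1)$ avoiders. I would set up an insertion map producing each $e\in\I_n(101,110)$ with $\zero(e)=k$ from a unique shorter avoider $f$, organized so that each $f$ with $k-1$ zeros is used once (the term $L_{n-1,k-1}$); each $f$ with exactly $k$ zeros is used $k$ times, the $k$ choices being governed by the $k$ zeros of $f$ and the floor structure above (the term $kL_{n-1,k}$); and each $f$ with $j>k$ zeros is used exactly once, its surplus trailing zeros being absorbed into the constant positive run forced by the structural lemma (the term $\sum_{j>k}L_{n-1,j}$). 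One then checks, using the structural lemma, that the inserted sequence again avoids $101$ and $110$, that its number of zeros is the prescribed $k$, and that every $e\in L_{n,k}$ is obtained exactly once; an induction on $n$ then gives $L_{n,k}=T_{n,k}$.

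The main obstacle is precisely this decomposition. It is worth stressing that the naive surgery---deleting the last entry---does \emph{not} reproduce the recurrence, since deleting $e_n$ sends $\I_n(101,110)$ only to avoiders with the same or one fewer zeros, whereas the summand $\sum_{j>k}L_{n-1,j}$ demands contributions from avoiders with \emph{strictly more} zeros. Thus the operation must genuinely change the zero count, converting trailing zeros into a positive run, and moreover the bijection onto the $L_{n-1,k-1}$ part is itself piecewise, because trailing-zero deletion alone reaches only those shorter avoiders with distinct positive entries. Arranging the fiber sizes to come out as $1$, $k$, and $1$ while preserving pattern avoidance is the delicate bookkeeping the argument must carry out. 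An attractive alternative, which I would pursue in parallel, is to construct a single $\zero$-to-block-index bijection from $\I_n(101,110)$ onto the indecomposable partitions of $[n+1]$ (matching $\zero(e)=k$ with ``$n+1$ in the $k$-th block'') and thereby transport the recurrence of~\cite{cal} directly.
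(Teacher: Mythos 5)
Your reduction of the theorem to verifying the recurrence $L_{n,k}=L_{n-1,k-1}+kL_{n-1,k}+\sum_{j=k+1}^{n-1}L_{n-1,j}$ with $L_{1,i}=\delta_{1,i}$ is exactly right, and so is your structural characterization of $(101,110)$-avoiders (every entry to the right of the first occurrence of a repeated value $v$ is $\geq v$). But the proposal has a genuine gap at precisely the point you yourself flag as ``the main obstacle'': the map realizing the fiber sizes $1$, $k$, $1$ is never constructed. You specify the multiplicities such a map must have --- each shorter avoider with $k-1$ zeros used once, each with $k$ zeros used $k$ times, each with $j>k$ zeros used once --- but stating the desired properties is not a proof that an avoidance-compatible operation with those properties exists; that existence is the entire content of the theorem beyond the (routine) reduction to the recurrence. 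Your parallel suggestion of a direct bijection to indecomposable partitions of $[n+1]$ is likewise only announced, not carried out, so it cannot rescue the argument as written.

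For comparison, the paper fills this gap with a single \emph{deletion} map stratified by the number of ones, rather than an insertion map indexed by the target zero count. For $e\in\I_n(101,110)$ with $\zero(e)=k$ and exactly $\ell$ entries equal to $1$, it defines $\sigma_{-1}(e)=(e_2',\ldots,e_n')$ where $e_i'=e_i-\chi(e_i>0)$: delete the leading zero and decrement every positive entry. The $\ell$ ones become zeros, so $\sigma_{-1}(e)$ has $k+\ell-1$ zeros. Avoidance of $101$ and $110$, restricted to the subsequence of zeros and ones of $e$, then pins down the fibers: if $\ell\geq 2$, the ones must be consecutive and occupy the last $\ell$ positions of that subsequence (a zero between two ones is a $101$, a zero after two ones is a $110$), so each avoider with $k+\ell-1>k$ zeros has a unique preimage; if $\ell=1$, the single one can be placed in any of $k$ positions relative to the $k$ zeros, giving fibers of size exactly $k$; if $\ell=0$, the map is a bijection onto avoiders with $k-1$ zeros. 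Summing over $\ell$ yields the three terms of the recurrence. Your plan is compatible with this (the fiber specification you give is exactly what $\sigma_{-1}$ achieves), but without an explicit operation of this kind --- together with the verification that preimages again avoid both patterns, which is where the decrement-and-delete idea does real work --- the proof is incomplete.
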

\begin{proof}
Let 
$$
Z_{n,k}=\{e\in\I_n(101,110):\zero(e)=k\}\quad\text{and}\quad z_{n,k}=|Z_{n,k}|. 
$$
It will be sufficient to show that 
\begin{equation}\label{eq:101,110}
z_{n,k} =z_{n-1,k-1} +kz_{n-1,k} + \sum_{j=k+1}^{n-1}z_{n-1,j}
\end{equation}
with initial condition $z_{1,i}=\delta_{1,i}$. 

Let $Z_{n,k,\ell}$ be the set of  sequences $e\in Z_{n,k}$ such that within $e$ there are exactly $\ell$ ones. For $e\in\I_n$, let us introduce the operation $\sigma_{-1}(e)=(e_2',e_3',\ldots,e_{n}')$, where  $e_i'=e_i-\chi(e_i>0)$. It is clear that $\sigma_{-1}(e)\in Z_{n-1,k+\ell-1}$. In fact, if $\ell=0$, then $\sigma_{-1}$ is a bijection between $Z_{n,k,0}$ and $Z_{n-1,k-1}$ and so $|Z_{n,k,0}|=z_{n,-1,k-1}$. However, if $\ell=1$, each element of $Z_{n-1,k}$ is the image under $\sigma_{-1}$ of  $k$ elements of $Z_{n,k,1}$; and if $\ell\geq2$, then each element of $Z_{n-1,k+\ell-1}$ is the image under $\sigma_{-1}$ of an unique  element from $Z_{n,k,\ell}$. To see this, for $e\in Z_{n,k,\ell}$, let $\tilde{e}$ be the subsequence of $e$ consisting of all of  the zeros and ones in $e$. Since $e$ avoids both $101$ and $110$, when $\ell\geq2$, the $\ell$ ones in $\tilde{e}$ must be consecutive and in the last $\ell$ positions of $\tilde{e}$. But if $\ell=1$, then there are $k$ ways to place the entry one in $\tilde{e}$. It follows that 
$$
|Z_{n,k,1}|=kz_{n-1,k}\quad\text{and}\quad |Z_{n,k,\ell}|=z_{n-1,k+\ell-1}\,\,\text{for $\ell\geq2$}. 
$$
Taking all the above cases into account gives 
$$
z_{n,k}=\sum_{\ell\geq0}|Z_{n,k,\ell}|=z_{n-1,k-1} +kz_{n-1,k} + \sum_{\ell\geq2}z_{n-1,k+\ell-1},
$$
which proves~\eqref{eq:101,110}. This completes the proof of the theorem.  
\end{proof}

\section{Wilf-equivalences} 
\label{sec:wilf}
Two sets of patterns $\Pi$ and $\Pi'$ are said to be {\em Wilf-equivalent} on $\I_n$
if for each positive integer $n$, 
$$|\I_n(\Pi)|=|\I_n(\Pi')|.$$ 
We denote the Wilf-equivalence of $\Pi$ and $\Pi'$ by $\Pi\thicksim\Pi'$. 
Our results in previous sections together with the works in~\cite{bgrr,ms,cjl}  have already established most of the Wilf-equivalences for inversion sequences avoiding pairs of  patterns of length $3$; see Tables~\ref{invseq:pairs} and~\ref{invseq:pairs2}. The computer data indicates that there are exactly  $48$ Wilf-equivalence classes, of which $4$ classes are remain unproved.  
The purpose of this section is to prove these remaining $4$ classes, which would completely classify all the Wilf-equivalences for inversion sequences avoiding pairs of  patterns of length $3$. 

We will apply a  bijection $\varphi: \I_n(210)\rightarrow\I_n(201)$ of Corteel et al.~\cite[Thm.~5]{cor}. For the sake of convenience, we review the construction of $\varphi$ as follows. It was proven  in~\cite[Obs.~3]{cor}  that $210$-avoiding inversion sequences are precisely those that can be partitioned into two weakly increasing subsequences. Given $e\in\I_n(210)$, suppose that $e_{a_1}\leq e_{a_2}\leq\cdots\leq e_{a_t}$ is the sequence of weak left-to-right maxima of $e$ and $e_{b_1}\leq e_{b_2}\leq\cdots\leq e_{b_{n-t}}$ is the subsequence of remaining elements of $e$. Define $\varphi(e)=(f_1,f_2,\ldots,f_n)$, where 
\begin{itemize}
\item  $f_{a_i}=e_{a_i}$ for $i=1,\ldots,t$ and 
\item for each $j=1,2,\ldots,n-t$, extract an element of the multiset $B=\{e_{b_1},e_{b_2},\ldots,e_{b_{n-t}}\}$ and assign it to $f_{b_1},f_{b_2},\ldots,f_{b_{n-t}}$ according to the rule:  
$$
f_{b_j}=\max\{k\,|\, k\in B-\{f_{b_1},f_{b_2},\ldots,f_{b_{j-1}}\}\text{ and } k<\max(e_1,\ldots,e_{b_{j-1}})\}. 
$$
\end{itemize}
As an example, we have 
\begin{equation}\label{exm:varphi}
e=(\blue{0,0,0,3},1,\blue{4},2,2) \mapsto\varphi(e)=(\blue{0,0,0,3},2,\blue{4},2,1),
\end{equation}
where the weak left-to-right maxima of $e$ are colored blue. 
\begin{theorem}\label{wilf:3}The following three Wilf-equivalences hold:
$$
(011,201)\thicksim(011,210),\quad (000,201)\thicksim(000,210)\quad\text{and}\quad (100,021)\thicksim(110,021).  
$$
\end{theorem}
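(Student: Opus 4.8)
The plan is to leverage the existing bijection $\varphi: \I_n(210)\rightarrow\I_n(201)$ of Corteel et al., and to show that for each of the three asserted equivalences, $\varphi$ (or a variant of it) restricts to a bijection between the relevant avoidance classes. The key observation to exploit is that $\varphi$ keeps the subsequence of weak left-to-right maxima fixed and only rearranges the remaining ``small'' entries, reassigning them greedily in a way that never exceeds the running maximum. I first need to understand precisely how $\varphi$ interacts with the extra forbidden pattern in each pair.

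\textbf{The three equivalences in turn.} For $(011,201)\thicksim(011,210)$, the natural candidate is to show that $\varphi$ maps $\I_n(011,210)$ bijectively onto $\I_n(011,201)$. Since $\varphi$ already handles the $210\mapsto 201$ part, the work is to verify that $\varphi$ preserves $011$-avoidance in the appropriate direction. I would characterize $011$-avoidance in terms of the statistic data $\varphi$ keeps invariant: recall that an inversion sequence avoids $011$ iff its positive entries are distinct. Because $\varphi$ fixes the weak left-to-right maxima pointwise and merely permutes the multiset of remaining entries, the multiset of all entries is preserved, so $\dist$ and the distinctness of positive values should be controlled; I would check that a repeated positive value in $\varphi(e)$ forces a repeated positive value in $e$. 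For $(000,201)\thicksim(000,210)$, the extra pattern $000$ forbids three equal entries anywhere, i.e.\ $\zero(e)\le 2$ restricted suitably and every value used at most twice. Again the multiset of entries is invariant under $\varphi$, so $000$-avoidance (no value repeated three times) should be preserved verbatim, making $\varphi$ restrict directly. The case $(100,021)\thicksim(110,021)$ is structurally different: here the common pattern is $021$ rather than a $20$-versus-$21$ pair, so $\varphi$ is not the right tool. Instead I expect to build a separate, more elementary bijection between $\I_n(100,021)$ and $\I_n(110,021)$, probably exploiting the fact (used in Section~\ref{sec:sch}) that $021$-avoiding inversion sequences have weakly increasing positive entries, and that under this constraint the distinction between a $100$ occurrence and a $110$ occurrence is controlled by how equal values interleave with zeros.

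\textbf{The main obstacle} I anticipate is the third equivalence $(100,021)\thicksim(110,021)$, since it does not fall under the $\varphi$ framework and will require its own combinatorial construction. The difficulty is that within the $021$-avoiding world (weakly increasing positive entries), the patterns $100$ and $110$ both concern a descent from a repeated or larger value down to a smaller one, and distinguishing them hinges on whether the two large entries preceding the small one are equal; designing an explicit weight- or position-swapping bijection that toggles exactly this feature while preserving $021$-avoidance is the delicate step. For the two $\varphi$-based equivalences, the obstacle is milder but still requires care: I must confirm that $\varphi$ does not merely preserve the multiset of entries but actually respects the \emph{positional} structure needed to guarantee the extra pattern is avoided in the image, which means tracing through the greedy assignment rule to rule out a newly created $011$ (resp.\ $000$) occurrence. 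I would verify these claims by induction on $n$ or by directly analyzing the positions of repeated values, using the explicit example in~\eqref{exm:varphi} as a sanity check throughout.
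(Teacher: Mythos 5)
For the first two equivalences your approach is exactly the paper's, and it works; in fact you are making it harder than it needs to be. The paper's argument is one line: $\varphi$ fixes the weak left-to-right maxima and merely redistributes the multiset of remaining entries, so $\varphi(e)$ has the same multiset of entries as $e$; since $011$-avoidance (positive entries distinct) and $000$-avoidance (no value occurring three times) are conditions on that multiset alone, $\varphi$ automatically restricts to bijections $\I_n(011,210)\rightarrow\I_n(011,201)$ and $\I_n(000,210)\rightarrow\I_n(000,201)$. Your stated worry that you must also check that $\varphi$ ``respects the positional structure'' is unfounded: no positional information enters either characterization, so no tracing of the greedy assignment is needed.

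The genuine gap is the third equivalence $(100,021)\thicksim(110,021)$. You correctly observe that $\varphi$ is the wrong tool and that the weakly-increasing-positive-entries structure of $021$-avoiders should be exploited, but you never produce the bijection; announcing that ``designing an explicit weight- or position-swapping bijection \ldots is the delicate step'' identifies the obstacle without overcoming it, so one third of the theorem remains unproved. For comparison, the paper's construction is short: under $021$-avoidance, a $100$-occurrence forces two zeros after the first positive entry, while a $110$-occurrence forces a repeated positive value followed by a later zero. Hence if $e\in\I_n(100,021)$ is not weakly increasing, it has \emph{exactly one} zero, say $e_k=0$, after its first positive entry; the map $e\mapsto e'$ replaces every positive entry before position $k$ by $0$ \emph{except} the first appearance of each positive value (and is the identity on weakly increasing sequences). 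The image then has all repeated positive values occurring only at or after position $k$, so $e'\in\I_n(110,021)$, and the construction is reversible because the suppressed entries can be recovered from the surviving first appearances. Without this (or some equivalent) explicit construction and its verification, your proposal establishes only two of the three claimed Wilf-equivalences.
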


\begin{proof}
An inversion sequence is $011$-avoiding if its positive entries are distinct, and is $000$-avoiding if no entry occurs more than twice.
Notice that the bijection  $\varphi$ between $\I_n(210)$ and $\I_n(201)$ is just rearrangement of the entries of inversion sequences. Thus, $\varphi$ preserves both the patterns $011$ and $000$.
It then follows that $\varphi$ restricts to a bijection between $\I_n(011,210)$ (resp.~$\I_n(000,210)$) and $\I_n(011,201)$  (resp.~$\I_n(000,201)$). This proves the first two Wilf-equivalences. 

For the third Wilf-equivalence, define the mapping $e\mapsto e'$ where $e\in\I_n(100,021)$, by 
\begin{itemize}
\item $e'=e$, if the entries of $e$ are weakly increasing;
\item otherwise, $e$ contains exactly one zero entry, say $e_k=0$, after the first positive entry. Then let $e'$ be the inversion sequence obtained from $e$ by changing all positive entries before $e_k$ to zeros, except all these that are the first appearances of each positive value. 
\end{itemize}
For instance, we have 
$$e=(0,0,1,1,2,2,2,0,3,3,5)\mapsto e'=(0,0,1,0,2,0,0,0,3,3,5).
$$
It is routine  to check that this mapping establishes a one-to-one correspondence between $\I_n(100,021)$ and $\I_n(110,021)$. 
\end{proof}

Example~\eqref{exm:varphi} indicates that  $\varphi$ does not restrict to a bijection between $\I_n(010,201)$ and $\I_n(010,210)$, because it  does not preserve the pattern $010$.  However, we still have the following equinumerosity. 

\begin{theorem}
There exists a bijection $\psi:\I_n(201)\rightarrow\I_n(210)$ which preserves the pattern $010$ and the triple of statistics $(\zero,\dist,\satu)$. In particular, $(010,201)\thicksim(010,210)$.
\end{theorem}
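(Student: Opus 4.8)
The plan is to produce $\psi$ explicitly as a \emph{rearrangement} of entries. Taking $\psi$ to be a rearrangement makes $\zero$ and $\dist$ automatically preserved, so the whole burden falls on the positions of saturated entries (for $\satu$) and on the ``sandwich'' structure of repeated values (for the pattern $010$). The natural first attempt is to reuse the map of Corteel et al.\ and set $\psi=\varphi^{-1}$. Since $\varphi$ only permutes the non-weak-left-to-right-maxima, and every saturated entry $e_i=i-1$ is a strict left-to-right maximum (because $e_j\le j-1<i-1$ for $j<i$) and hence is fixed, while no non-maximal position can acquire value $i-1$ after the reordering, this map already preserves $(\zero,\dist,\satu)$. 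Unfortunately it destroys $010$: by~\eqref{exm:varphi} the element $f=(0,0,0,3,2,4,2,1)\in\I_8(201)$ contains the occurrence $2,4,2$ of $010$, whereas $\varphi^{-1}(f)=(0,0,0,3,1,4,2,2)\in\I_8(210)$ avoids $010$. Hence the required $\psi$ must be strictly finer than $\varphi^{-1}$, and in particular it cannot keep every left-to-right maximum in place.

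For the structural setup I would attach to an inversion sequence $e$ three pieces of data: its multiset of entries (which fixes $\zero$ and $\dist$), the set of positions $i$ with $e_i=i-1$ (which fixes $\satu$), and, for each value $v$ occurring at least twice, the Boolean recording whether some entry strictly larger than $v$ lies strictly between two occurrences of $v$. The point is that $e$ contains $010$ exactly when this Boolean is true for some $v$, so $010$ is preserved as soon as all these Booleans are preserved. On the target side I would use Corteel's characterization that $\I_n(210)$ is precisely the set of sequences splitting into two weakly increasing subsequences; this yields a convenient sufficient condition for landing in $\I_n(210)$, namely that the non-record entries can be laid out so that, together with the records, they form two weakly increasing chains.

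The construction I would pursue keeps $\psi$ a rearrangement that fixes the saturated positions and reorganizes the remaining entries so as to reproduce the sandwich Booleans exactly. I would specify $\psi$ by an explicit local rule --- grouping the occurrences of each repeated value and deciding, value by value from the largest down, whether to keep its copies consecutive or to interleave a single larger block between two of them, mirroring the source --- and I would write down the inverse rule directly. The verifications then split into (i) that the output lies in $\I_n(210)$, which I would obtain from the two-weakly-increasing-subsequence criterion, and (ii) that $\psi$ is a bijection, which I would obtain by checking that the stated inverse undoes it; preservation of $(\zero,\dist,\satu)$ is immediate from the rearrangement being saturation-fixing. Finally, because $\psi$ preserves all occurrences of $010$, it restricts to a bijection $\I_n(010,201)\to\I_n(010,210)$, which gives the stated Wilf-equivalence $(010,201)\thicksim(010,210)$.

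The main obstacle is reconciling $010$-preservation with $210$-avoidance. The example above already shows that manufacturing a sandwiched repeated value in the image forces a record to move, so the placement rule for repeated non-record values must simultaneously (a) keep the two ascending chains intact, so that no strictly decreasing triple appears, (b) leave the saturated positions untouched, and (c) insert a larger entry strictly between two equal entries exactly when the source did. Balancing (a) against (c) is delicate, since separating two equal entries by a larger one is precisely the operation that tends to create a descent. The heart of the proof will therefore be a careful bookkeeping argument showing that this reorganization can always be performed, and inverted, without breaking the two-chain decomposition; I expect this consistency check, rather than the invariance of $(\zero,\dist,\satu)$, to be where essentially all of the work lies.
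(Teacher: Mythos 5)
Your preliminary observations are sound: $\varphi^{-1}$ does preserve $(\zero,\dist,\satu)$ (saturated entries are strict left-to-right maxima, hence fixed, and no non-maximal position can become saturated), it fails to preserve $010$ exactly as your example --- the paper's~\eqref{exm:varphi} read backwards --- shows, and containment of $010$ is indeed equivalent to some repeated value having a strictly larger entry sandwiched between two of its occurrences. But the proposal stops exactly where the theorem begins: the map $\psi$ is never defined. The ``explicit local rule'' is described only as grouping the copies of each repeated value and deciding ``whether to keep its copies consecutive or to interleave a single larger block between two of them,'' with no specification of which block, where it is placed, how the two weakly increasing chains are maintained, or how the rule is inverted; and you say yourself that the consistency check constituting ``essentially all of the work'' remains to be done. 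As it stands there is no construction, hence no well-definedness, bijectivity, or $010$-preservation argument --- the theorem is not proved.

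Moreover, the constraints you impose on yourself --- that $\psi$ be a rearrangement of the entries and that it preserve each per-value sandwich Boolean exactly --- are strictly stronger than what the paper's proof achieves, and there is no evidence that a map satisfying both exists. The paper's $\psi$ iterates, over the distinct values $2\le k_1<\cdots<k_m$ occurring in $e$, a \emph{cyclic exchange} applied to the subsequence $S^{(t)}$ of entries lying after the left-most $k_t$ and smaller than $k_t$: when $S^{(t)}$ contains no zero, its distinct values $v_1<\cdots<v_\ell$ are shifted cyclically ($v_i\mapsto v_{i+1}$, $v_\ell\mapsto v_1$), which \emph{changes the multiset of entries}; the map preserves $\zero$ and $\dist$ only because zeros are untouched and the set of distinct values is permuted, not the multiset. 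Likewise it does not preserve your Booleans value by value: in the paper's case analysis a $010$-occurrence $aba$ with $a=0$ is transported to an occurrence $0\,k_t\,0$ at a different value. The ingredient that makes the iteration work is a rigidity property of $201$-avoiding sequences (the entries of $S^{(t)}$ must occur in the order $k_{t-1},v_1,v_2,\ldots,v_{\ell-1}$), which simultaneously guarantees that the output avoids $210$, that each exchange is reversible, and that $010$-containment is preserved. Nothing playing this structural role appears in your plan, and it is precisely what your deferred ``bookkeeping argument'' would have to supply.
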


\begin{proof}
The idea underlying the construction of $\psi$ is to replace iteratively, occurrences of the pattern $210$ in an element of $\I_n(201)\setminus\I_n(210)$ with copies of the pattern $201$, preserving the pattern $010$.

We call a subsequence $e_ae_be_c$ ($a<b<c$) of an inversion sequence $e$ that is order isomorphic to $210$ (resp.~$201$) a  $k$-occurrence of $210$ (resp.~$201$)  if $e_a=k$.
Since $\psi$ is simply the identity on  $\I_n(201)\cap\I_n(210)$, we only need to define the map $\psi$ from $\I_n(201)\setminus\I_n(210)$ to $\I_n(210)\setminus\I_n(201)$. 

We need to introduce a fundamental operation on an inversion sequence $e$. Suppose $S$ is a subsequence of $e$ and the set of distinct entries in $S$ is $\{v_1,v_2,\ldots,v_{\ell}\}$ with $v_1<v_2<\cdots<v_{\ell}$ for some $\ell\geq2$. Introduce the {\bf cyclic exchange} on $e$ with respect to $S$ according to the following two cases: 
\begin{itemize}
\item if $v_1=0$, then rearrange  the entries of $S$  so that all entries of $S$ appear in weakly increasing order;
\item otherwise, change all entries of $S$ with value $v_t$ to $v_{t+1}$ for $1\leq t\leq \ell-1$ and change all entries of $S$ with value $v_{\ell}$ to $v_1$. 
\end{itemize}
Denote by $\mathcal{C}(e,S)$ the resulting sequence (not necessary an inversion sequence in general). 
For instance, if $e=(0,0,0,3,\blue{2,2},4,\blue{1},3,\blue{0,1,1})$ (resp.~$(0,0,0,3,\blue{1},4,\blue{3,1},3,\blue{1,2,2})$) with the subsequence $S$ in blue, then 
$$
\mathcal{C}(e,S)=(0,0,0,3,\blue{0,1},4,\blue{1},3,\blue{1,2,2})\quad\text{(resp.~$(0,0,0,3,\blue{2},4,\blue{1,2},3,\blue{2,3,3})$)}.
$$
Since whenever  $0$ is a letter of $S$, the sequence $\mathcal{C}(e,S)$ is just rearrangement of $e$, the operation $\mathcal{C}$ preserves the number of zero entries.  It is  clear that the operation $\mathcal{C}$ also preserves the number of distinct entries of inversion sequences.

Let $e\in\I_n(201)\setminus\I_n(210)$. Then, $e$ contains the pattern $210$ but avoids  $201$. 
We apply the following  algorithm on $e$, where $t$ is a temporary variable: 
\begin{itemize}
\item[(1)](Start) suppose the distinct entries greater than one appearing in $e$ are $2\leq k_1<k_2<\cdots<k_m\leq n-1$; set $e^{(0)}=e$, $t\leftarrow 1$ and go to step (2);
\item[(2)] go to step (3) if $e^{(t-1)}$ has  at least one $k_{t}$-occurrence of $210$, otherwise, set $e^{(t)}=e^{(t-1)}$, $t\leftarrow t+1$ and do step (2) again; 
\item[(3)] let $S^{(t)}$ be the subsequence of $e^{(t-1)}$ consisting of entries after the left-most occurrence of $k_{t}$ whose values are smaller than $k_{t}$; set $e^{(t)}=\mathcal{C}(e^{(t-1)},S^{(t)})$, $t\leftarrow t+1$ and go to step (2).
\end{itemize}
Finally, this algorithm terminates when $t= m+1$ and we define $\psi(e)=e^{(m)}$.  For example,  if $e=(0,0,1,2,3,2,2,4,3,4,8,7,5,4,3,0)$
is a sequence in $\I_{15}(201)\setminus\I_{15}(210)$, then 
\begin{align*}
e&=(0,0,1,2,\red{3},\blue{2,2,}4,3,4,8,7,5,4,3,\blue{0})\rightarrow(0,0,1,2,3,0,2,\red{4},\blue{3},4,8,7,5,4,\blue{3},\blue{2})\\
&\rightarrow(0,0,1,2,3,0,1,4,2,4,8,7,\red{5},\blue{4,2,3})\rightarrow (0,0,1,2,3,0,1,4,2,4,8,\red{7},\blue{5,2,3,4})\\
&\rightarrow(0,0,1,2,3,0,1,4,2,4,\red{8},\blue{7,2,3,4,5})\rightarrow(0,0,1,2,3,0,1,4,2,4,8,2,3,4,5,7)=\psi(e).
\end{align*}

We need to show that $\psi$ is well-defined, i.e., $\psi(e)\in\I_n(210)\setminus\I_n(201)$. It  suffices  to prove that  if $e^{(t-1)}$ has no $k_i$-occurrence of $210$ for any $1\leq i\leq t-1$ and contains $k_{t}$-occurrence of $210$, then $e^{(t)}$ has no $k_i$-occurrence of $210$ for any $1\leq i\leq t$ and contains $k_{t}$-occurrence of $201$. To see this, suppose that the distinct entries of $S^{(t)}$ are $v_1<v_2<\cdots<v_{\ell}$, then $v_{\ell}$ must be $k_{t-1}$. Otherwise, $v_{\ell}<k_{t-1}$ and all $k_{t-1}$'s in $e^{(t-1)}$ will appear to the left of the left-most $k_t$, which implies that $e^{(t-1)}$ has $k_{t-1}$-occurrence of $210$, a contradiction. Next, we claim that all $v_{\ell}$'s in $S^{(t)}$ appear before all $v_{i}$'s for $1\leq i\leq \ell-1$. For otherwise, $k_t$ and $k_{t-1}=v_{\ell}$ will play the roles of $2$ and $1$ in a pattern $201$ of $e^{(t-1)}$, which forces $e$ to contain the pattern $201$ because $e^{(t-1)}$ is obtained from $e$ by modifying the entries smaller than $k_{t-1}$. Thus, we come to the crucial conclusion that 
\begin{center}
($\star$) all entries of $S^{(t)}$ must appear from left to right in the order: $v_{\ell}=k_{t-1},v_1,v_2,\ldots,v_{\ell-1}$. 
\end{center}
So after applying the cyclic exchange on $e^{(t-1)}$ with respect to $S^{(t)}$, the entries after the left-most $k_t$ and smaller than $k_t$ appear in weakly increasing order, which removes all $k_t$-occurrences of $210$ and replaces them with $k_t$-occurrences of $201$ in $e^{(t)}$.  This proves that $\psi$ is well-defined. 

In view of the property ($\star$), each cyclic exchange operation involved in step (3) is reversible and so $\psi$ is bijective. We have already known that the cyclic exchange operation $\mathcal{C}$ preserves the pair of statistics $(\zero,\dist)$, and so does $\psi$. Moreover, since in  step (3) of the algorithm of constructing $\psi$, we just modify the entries after some $k_t$ with values smaller than $k_t$,  $\psi$ also preserves the statistic `$\satu$'. Finally, we claim  that if  $e^{(t)}=\mathcal{C}(e^{(t-1)},S^{(t)})$, then $e^{(t-1)}$ contains  $010$ if and only if $e^{(t)}$ contains $010$. If $aba$ ($a<b$) is a $010$-subsequence of $e^{(t-1)}$, then this $aba$ can not be a subsequence of $S^{(t)}$ because of  property ($\star$). We distinguish several cases:
\begin{itemize}
\item[1)] all letters of $aba$ are not in $S^{(t)}$, then  $aba$ is a $010$ pattern of $e^{(t)}$.
\item[2)] both $a$'s are entries of $S^{(t)}$, then 
\begin{itemize}
\item[2a)] if $a=0$, then subsequence $0k_t0$ is a $010$ pattern of $e^{(t)}$, where the left $0$ is the initial zero of $e^{(t)}$ and the right $0$ is a zero entry (must exist) after $k_t$;
\item[2b)] otherwise $a\neq0$, then both $a$'s will be changed into two entries with the same value smaller than $b$ (since $b>k_t$), which together with $b$ forms a $010$ pattern of $e^{(t)}$. 
\end{itemize}
\item [3)] only  the right $a$ is an entry of $S^{(t)}$, then still $aba$ is a $010$-subsequence of $e^{(t)}$. 
\item[4)] $ba$ are subsequence of $S^{(t)}$, then $ak_ta$ is a $010$-subsequence of $e^{(t)}$. 
\end{itemize}
In all cases, $e^{(t)}$ contains the pattern $010$, which proves the `only if' side of the claim. The `if' side of the claim can be proven by similar discussions, which will be omitted. This completes the proof of the theorem. 
\end{proof}

\subsection{Revisiting  $(011,201)$ and $(011,210)$: an unbalanced Wilf-equivalence conjecture}
Following Burstein and Pantone~\cite{bur},  a Wilf-equivalence is called {\em unbalanced} if the two sets of patterns do not contain the same number of patterns of each length. The first two unbalanced Wilf-equivalences for classical patterns  in permutations were proved in~\cite{bur}. For inversion sequences, several instances of unbalanced Wilf-equivalences have been proved by Martinez and Savage~\cite{ms} such as 
$$
|\I_n(021)|=S_n=|\I_n(210,201,101,100)|\quad\text{and}\quad|\I_n(011)|=B_n=|\I_n(000,110)|. 
$$
Regarding the pattern pairs $(011,201)$ and $(011,210)$, we make  the following unbalanced Wilf-equivalence conjecture. 

\begin{conjecture}\label{conj:wilf}
The following  Wilf-equivalences hold:
$$
(011,201)\thicksim(011,210)\thicksim(110,210,120,010)\thicksim(100,210,120,010).
$$
\end{conjecture}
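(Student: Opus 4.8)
The plan is to reduce the four-fold equivalence to relation-triple classes that are already understood in~\cite{ms}, and then to attach the genuinely new pattern pair $(011,210)$ to them. The first move is a routine pattern-versus-relation-triple computation: checking which of the thirteen length-$3$ patterns are compatible with a given triple $(\rho_1,\rho_2,\rho_3)$ (by reading off the relations $e_i\,?\,e_j$, $e_j\,?\,e_k$, $e_i\,?\,e_k$ of each pattern), one obtains the \emph{set} identities
\begin{equation*}
\I_n(110,210,120,010)=\I_n(-,>,\geq)\qquad\text{and}\qquad \I_n(100,210,120,010)=\I_n(\neq,\geq,\geq).
\end{equation*}
Indeed, the patterns satisfying $e_j>e_k$ and $e_i\geq e_k$ are exactly $\{010,110,120,210\}$, while those satisfying $e_i\neq e_j$, $e_j\geq e_k$ and $e_i\geq e_k$ are exactly $\{010,100,120,210\}$. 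Combined with the equivalence $(011,201)\thicksim(011,210)$ proved in Theorem~\ref{wilf:3}, this reduces Conjecture~\ref{conj:wilf} to the two cardinality identities
\begin{equation*}
|\I_n(011,210)|=|\I_n(-,>,\geq)|\qquad\text{and}\qquad |\I_n(-,>,\geq)|=|\I_n(\neq,\geq,\geq)|.
\end{equation*}

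For the second identity I would note that both classes refine the common class $\I_n(\neq,>,\geq)=\I_n(010,120,210)$ by one further forbidden pattern, namely $\I_n(-,>,\geq)=\I_n(\neq,>,\geq)\cap\I_n(110)$ and $\I_n(\neq,\geq,\geq)=\I_n(\neq,>,\geq)\cap\I_n(100)$. It therefore suffices, inside the sequences avoiding $\{010,120,210\}$, to exchange the forbidden configuration $110$ (two equal entries followed by a strictly smaller one) for $100$ (one entry followed by two equal strictly smaller ones). I would analyse the rigid block structure of repeated values forced by avoidance of $\{010,120,210\}$ and build a bijection that redistributes the multiplicity of a repeated value between its high and low occurrences; alternatively, this particular equinumerosity may already be extractable from the relation-triple Wilf-classification of~\cite{ms}, in which case it can simply be cited.

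The heart of the matter is the first identity, $|\I_n(011,210)|=|\I_n(-,>,\geq)|$, since the left-hand class (inversion sequences with distinct positive entries and no strictly decreasing triple, recalling that $\I_n(011)$ is exactly the sequences with distinct positive entries) genuinely differs as a set from the right-hand class: already at $n=3$ one has $\I_3(011,210)=\I_3(-,>,\geq)$ as numbers but not as sets, the former containing $(0,1,0)$ and the latter $(0,1,1)$, both being of size $5$. I see two viable routes. The bijective route starts from the append-a-last-entry descriptions: a value $v$ may be appended to an element of $\I_{n-1}(-,>,\geq)$ precisely when no earlier entry $\geq v$ occurs strictly before an earlier entry $>v$, whereas it may be appended to an element of $\I_{n-1}(011,210)$ precisely when $v$ is $0$ or a previously unused positive and $v$ lies below no strictly decreasing pair $e_i>e_j$ of earlier entries; matching these two growth rules through a suitable common state would yield a size-preserving correspondence. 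The analytic route would instead attach to each class a recurrence with a catalytic variable — the last entry, or the maximum entry together with its multiplicity — translate it into a functional equation, and show the two functional equations coincide (thereby also recovering A279555).

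I expect the construction of this last bijection, or equivalently the reconciliation of the two catalytic recurrences, to be the main obstacle. The two classes treat repeated values in opposite ways, so the correspondence must convert the constrained repeated-value structure permitted on the right into the distinct-positive-entry structure on the left, all while respecting the bound $e_i\leq i-1$ and preserving $210$-avoidance at every step. Keeping a catalytic statistic under control so that the induction closes is precisely where the difficulty concentrates, which is presumably why the authors leave it as a conjecture.
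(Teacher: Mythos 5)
First, a point of status: the statement you were asked to prove is a \emph{conjecture} in the paper, and the paper does not prove it either; Section~\ref{sec:wilf} only assembles the partial results and leaves the core open. Measured against that, your proposal is an accurate reconstruction of exactly the paper's reduction: the set identities $\I_n(110,210,120,010)=\I_n(-,>,\geq)$ and $\I_n(100,210,120,010)=\I_n(\neq,\geq,\geq)$ are the ones the paper states, the third equivalence then follows by citing \cite[Thm.~62]{ms} (so your fallback of ``simply citing'' the relation-triple classification is the right move --- no new bijection between $\I_n(-,>,\geq)$ and $\I_n(\neq,\geq,\geq)$ needs to be built), and the first equivalence is Theorem~\ref{wilf:3}. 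Your identification of $|\I_n(011,210)|=|\I_n(-,>,\geq)|$ as the heart of the matter, including the observation that the equality cannot hold at the level of sets (your $n=3$ example with $(0,1,0)$ versus $(0,1,1)$ is correct), also matches the paper's own discussion.

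However, as a proof the proposal has a genuine gap, and it is precisely the step that makes the statement a conjecture rather than a theorem: the identity $|\I_n(011,210)|=|\I_n(-,>,\geq)|$ is never established. Both of your proposed routes are left as sketches, and neither is routine. Note that your ``analytic route'' is exactly what the paper carries out as far as it can: Section~\ref{sec:wilf} derives the functional equation~\eqref{func:011,201} for the generating function $C(x;u,v)$ of $(011,201)$-avoiding sequences, with catalytic variables recording the value and position of the unique largest entry, and, via a generating tree on the parameters $(p,q)=(n-m_1(e),\,m_1(e)-m_2(e))$, the functional equation~\eqref{eq:sav} for $(\neq,\geq,\geq)$-avoiding sequences; it then closes by \emph{asking} whether these two equations can be solved to prove Conjecture~\ref{conj:wilf}. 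So the ``reconciliation of the two catalytic recurrences'' that you defer is not a verification one can leave to the reader: it is the open problem itself, unresolved by the authors as well. Your write-up is best read as a correct reduction of the conjecture to this open identity, together with a sound account of where the difficulty lies, but not as a proof of the statement.
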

For Conjecture~\ref{conj:wilf}, the first equivalence has been confirmed in Theorem~\ref{wilf:3}, while the third equivalence was established in~\cite[Thm.~62]{ms}, as 
$$\I_n(110,210,120,010)=\I_n(-,>,\geq)\quad\text{and}\quad\I_n(100,210,120,010)=\I_n(\neq,\geq,\geq).$$
The open problem is the second equivalence. Though unable to prove Conjecture~\ref{conj:wilf}, we find  functional equations that can be applied to compute $|\I_n(011,201)|$  and $|\I_n(\neq,\geq,\geq)|$ for larger  $n$,  in the rest of this section. 

Observe that $011$-avoiding inversion sequences are these whose positive entries are distinct.  For $1\leq m<\ell\leq n$, let us  consider the number $c_{n,m,\ell}$ of $(011,201)$-avoiding inversion sequences of length $n$ with $e_{\ell}=m$ as the unique largest entry.  Let 
$$
c_{n,m}(v)=\sum_{\ell=m+1}^nc_{n,m,\ell}v^{\ell-m-1}\quad\text{and}\quad c_n(u,v)=\sum_{m=0}^{n-1}c_{n,m}(v)u^m.
$$
Define the generating function $C(x;u,v)$ by
$$
C(x;u,v):=\sum_{n\geq1}c_n(u,v)x^n=x+(1+u)x^2+(1+u+uv+2u^2)x^3+\cdots. 
$$
Then $C(x;u,v)$ satisfies the following functional equation. 
\begin{proposition}
For $1\leq m<\ell\leq n$, we have the recursion 
\begin{equation}\label{rec:011,201}
c_{n,m,\ell}=\sum_{j=0}^{m-1}\sum_{i=j+1}^{\ell}c_{n-1,j,i}
\end{equation}
with $c_{n,0,\ell}=\delta_{1,\ell}$.
Equivalently, there holds the functional equation
\begin{equation}\label{func:011,201}
\biggl(1-\frac{ux}{v(1-v)}-\frac{u}{v}\biggr)C(x;u,v)=\frac{x}{1-x}-\frac{ux}{1-v}C(vx;u/v,1)+\frac{u(1-x)}{v}C(x;u,0).
\end{equation}
\end{proposition}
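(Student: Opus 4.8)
The plan is to establish the combinatorial recursion~\eqref{rec:011,201} first, by a remove/insert bijection organized around the unique largest entry, and then to translate it mechanically into the functional equation~\eqref{func:011,201}. The structural fact I would isolate at the outset is the following: \emph{in any $(011,201)$-avoiding inversion sequence, the entries lying strictly after the (unique) maximum are weakly decreasing.} Indeed, if such a sequence $e$ attains its maximum $M$ at position $p$ and $e_a<e_b$ for some $p<a<b$, then the subsequence $e_pe_ae_b$ realizes $M>e_b>e_a$, an occurrence of $201$; and the maximum is attained uniquely since $011$-avoidance forces the positive entries to be distinct.

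Given $e$ counted by $c_{n,m,\ell}$ (unique maximum $m$ at position $\ell$), delete $e_\ell$ to obtain $\hat e$ of length $n-1$. One checks easily that $\hat e$ is an inversion sequence (every entry past $\ell$ is $<m\le\ell-1$, so it remains bounded after the shift) and is still $(011,201)$-avoiding. Its maximum $j$, the second-largest value of $e$, satisfies $j\le m-1$; and by the displayed weakly-decreasing fact the largest value occurring after position $\ell$ in $e$ is $e_{\ell+1}$, so $j$ either occurs before $\ell$ (landing in position $i<\ell$ of $\hat e$) or equals $e_{\ell+1}$ (landing in position $i=\ell$). In all cases $j+1\le i\le\ell$. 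Conversely, from any $(011,201)$-avoiding $\hat e$ of length $n-1$ whose maximum $j\le m-1$ sits in position $i\le\ell$, I would insert the value $m$ in position $\ell$. This yields an inversion sequence since $m\le\ell-1$; it cannot create an $011$ since $m$ is new and strictly largest; and it cannot create a $201$ because $m$ can only play the largest ("$2$") role, while the entries of $e$ after position $\ell$ coincide with a sub-tail of the weakly-decreasing suffix of $\hat e$ beginning at position $i\le\ell$, hence admit no ascent. These two maps are mutually inverse, so summing over the admissible pairs $(j,i)$ gives~\eqref{rec:011,201}; the base case $c_{n,0,\ell}=\delta_{1,\ell}$ simply records the all-zero sequence.

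For the functional equation I would multiply~\eqref{rec:011,201} by $u^mv^{\ell-m-1}x^n$ and sum over $1\le m<\ell\le n$. The $m=0$ part of $C(x;u,v)$ contributes $\sum_{n\ge1}x^n=x/(1-x)$, the first term on the right. In the remaining sum I would interchange the order of summation and evaluate two nested geometric series—one in $\ell$ (absorbing a factor $v$) and one in $m$ (absorbing a factor $u/v$, which is precisely what produces the shifted evaluation $C(vx;u/v,1)$). The bulk of each geometric series reconstitutes $C(x;u,v)$, so that after moving these self-referential contributions to the left one obtains the coefficient $1-\frac{ux}{v(1-v)}-\frac{u}{v}$, while the boundary contributions (the endpoint $\ell=i$, equivalently the diagonal $\ell=m+1$) collapse onto the specialization $v=0$ and yield the term $\frac{u(1-x)}{v}C(x;u,0)$.

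The routine parts are the self-referential $C(x;u,v)$ coefficient and the constant $x/(1-x)$. The main obstacle will be the second half: correctly tracking the upper/lower limits of the two nested sums and teasing out, with their exact prefactors $-\frac{ux}{1-v}$ and $\frac{u(1-x)}{v}$, the shifted evaluation $C(vx;u/v,1)$ and the diagonal specialization $C(x;u,0)$. This bookkeeping is delicate precisely because the $u$- and $\ell$-summations must be interchanged against the constraint $j<m<\ell$, but it is of the same flavor as the manipulations carried out for~\eqref{fun:110,102} and in~\cite[Lem.~3.5]{mash}, which I would follow.
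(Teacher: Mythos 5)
Your treatment of the recursion \eqref{rec:011,201} is correct and coincides with the paper's own proof: delete the unique maximum $e_\ell=m$, note from $201$-avoidance that all entries after the maximum are weakly decreasing (so the maximum $j\le m-1$ of the shortened sequence sits in a position $i$ with $j+1\le i\le \ell$), and invert the map by reinserting $m$ in position $\ell$. You supply the pattern-preservation checks that the paper leaves implicit; this half is fine.

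The functional-equation half, however, has a genuine gap, and it is not merely that you defer the ``delicate bookkeeping'': the route you describe cannot end at \eqref{func:011,201}. Because the recursion lowers the length $n$ by one, a direct translation (multiply by $u^mv^{\ell-m-1}x^n$ and sum, as you propose) necessarily produces an identity of the form
\begin{equation*}
C(x;u,v)=\frac{x}{1-x}+x\cdot\bigl(\text{a linear combination of evaluations of $C$}\bigr),
\end{equation*}
in which every $C$-term on the right carries a factor of $x$. If one actually evaluates your two geometric series (lower limit $\max(m+1,i)$ in $\ell$, lower limit $j+1$ in $m$), the endpoint contributions assemble into the evaluations $C(x;u,u)$ and $C(ux;1,1)$, not $C(x;u,0)$; concretely the direct route yields the valid but different equation
\begin{equation*}
C(x;u,v)=\frac{x}{1-x}+\frac{ux\bigl(C(x;u,v)-vC(vx;u/v,1)\bigr)}{(1-v)(v-u)}-\frac{ux\bigl(C(x;u,u)-uC(ux;1,1)\bigr)}{(1-u)(v-u)}.
\end{equation*}
By contrast, \eqref{func:011,201} contains $x$-free terms, $-\frac{u}{v}C(x;u,v)$ on the left and a $\frac{u}{v}C(x;u,0)$ piece on the right, which encode a relation among coefficients of the \emph{same} length $n$; no direct translation of a length-reducing recursion can generate such terms. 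The paper produces them by a preliminary differencing step in $m$, namely $c_{n,m,\ell}-c_{n,m-1,\ell}=\sum_{i=m}^{\ell}c_{n-1,m-1,i}$, after which the shift $m\mapsto m-1$ against the weight $v^{\ell-m-1}$ forces the boundary term $c_{n,m-1}(0)$, i.e.\ the specialization $v=0$. Your assertion that the diagonal endpoint $\ell=m+1$ ``collapses onto'' $C(x;u,0)$ is exactly the step that fails; this differencing is the missing idea.

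Finally, had the bookkeeping been carried out by either route, you would have discovered that \eqref{func:011,201} is misprinted: comparing coefficients of $x^1$, its left side gives $1-u/v$ while its right side gives $1+u/v$. What the paper's own derivation (and any correct one) actually produces is
\begin{equation*}
\biggl(1-\frac{ux}{v(1-v)}-\frac{u}{v}\biggr)C(x;u,v)=\frac{x}{1-x}-\frac{ux}{1-v}C(vx;u/v,1)-\frac{u(1+x)}{v}C(x;u,0),
\end{equation*}
which does check against $C(x;u,v)=x+(1+u)x^2+(1+u+uv+2u^2)x^3+\cdots$ through order $x^3$. Promising to land exactly on the misprinted term $\frac{u(1-x)}{v}C(x;u,0)$ is possible only because the computation, which is the entire substance of this half of the proposition, was never attempted.
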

\begin{proof}
Let $\mathcal{C}_{n,m,\ell}$ be the set of sequences $e\in\I_n(011,201)$ with $e_{\ell}=m$ as the unique largest entry. For $e\in\mathcal{C}_{n,m,\ell}$, since $e$ is $201$-avoiding, the entries after $e_{\ell}$ are weakly decreasing. Thus, the deletion of $e_{\ell}$ from $e$ results in an inversion sequence in $\mathcal{C}_{n-1,j,i}$ for some $0\leq j\leq m-1$ and $j+1\leq i\leq \ell$. Conversely, from each inversion sequence  $e\in\mathcal{C}_{n-1,j,i}$ for $0\leq j\leq m-1$ and $j+1\leq i\leq \ell$, we can construct one sequence in $\mathcal{C}_{n,m,\ell}$ by adding a new entry $m$ to the $\ell$-th entry of $e$. This proves the recurrence relation~\eqref{rec:011,201} for $c_{n,m,\ell}$.

For $1\leq m<\ell\leq n$, it follows from~\eqref{rec:011,201} that
$$
c_{n,m,\ell}-c_{n,m-1,\ell}=\sum_{i=m}^{\ell} c_{n-1,m-1,i}. 
$$
Multiplying both sides by $v^{\ell-m-1}$ and summing over $m<\ell\leq n$ gives 
\begin{align*}
&\quad c_{n,m}(v)-v^{-1}(c_{n,m-1}(v)-c_{n,m-1}(0))\\
&=\sum_{\ell=m+1}^nv^{\ell-m-1}\sum_{i=m}^{\ell} c_{n-1,m-1,i}\\
&=-v^{-1}c_{n-1,m-1,m}+\sum_{i=m}^{n} c_{n-1,m-1,i}\sum_{\ell=i}^nv^{\ell-m-1}\\
&=\frac{1}{1-v}(v^{-1}c_{n-1,m-1}(v)-v^{n-m}c_{n-1,m-1}(1))-v^{-1}c_{n-1,m-1}(0)
\end{align*}
for $m\geq1$ with $c_{n,0}(v)=1$.
Multiplying both sides by $u^m$ and summing over $1\leq m<n$ results in 
$$
\biggl(1-\frac{u}{v}\biggr)c_n(u,v)+\frac{u}{v}c_n(u,0)=1+\frac{u}{1-v}(v^{-1}c_{n-1}(u,v)-v^{n-1}c_{n-1}(u/v,1))-\frac{u}{v}c_{n-1}(u,0). 
$$
Multiplying both sides by $x^n$ and summing over $n\geq2$ yields~\eqref{func:011,201} after simplification.
\end{proof}

Next we will use the generating tree technique (see~\cite{bo}) to count $(\neq,\geq,\geq)$-avoiding inversion sequences. For each $e\in\I_n(\neq,\geq,\geq)$, let $m_1(e)$ and $m_2(e)$ be the greatest element and the second greatest element in $\{e_i: i\in[n]\}\cup\{-1\}$, respectively. Introduce the parameters $(p,q)$ of $e$ by 
$$
p=n-m_1(e)\quad\text{and}\quad q=m_1(e)-m_2(e). 
$$
For example, the parameters of $(0,0,2,1)$ is $(2,1)$ and the parameters of $(0,0,0,0)$ is $(4,1)$. We have the following rewriting rule for $(\neq,\geq,\geq)$-avoiding inversion sequences. 

\begin{lemma}\label{lem:sav}
Let $e\in\I_n(\neq,\geq,\geq)$ be an inversion sequence with parameters $(p,q)$. Exactly $p+q$ inversion sequences in $\I_{n+1}(\neq,\geq,\geq)$ when removing their last entries will become $e$, and their parameters are respectively:
  \begin{align*}
 &(p+1,q), (p+1,q-1),\ldots, (p+1,1)\\
&(p,1), (p-1,2),\ldots, (1,p).
 \end{align*}
\end{lemma}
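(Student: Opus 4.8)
The plan is to determine exactly which values may be appended to $e$ as a final entry while preserving membership in $\I_{n+1}(\neq,\geq,\geq)$, and then to read off the parameters of each resulting sequence. Throughout I write $M=m_1(e)$ and $s=m_2(e)$.

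First I would reduce the avoidance condition to a local one. Consider $f=(e_1,\ldots,e_n,v)$ for some $0\le v\le n$. Since $e$ already avoids $(\neq,\geq,\geq)$, any forbidden occurrence in $f$ must involve the new entry, and because the new entry sits in the largest-index position it can only play the role of the third coordinate $e_k$. Hence $f$ is $(\neq,\geq,\geq)$-avoiding if and only if there is no pair $i<j\le n$ with $e_i\neq e_j$, $e_i\geq v$ and $e_j\geq v$; equivalently, all entries of $e$ of value at least $v$ are equal. The key observation is that this holds precisely when $v>s$: the only value of $e$ exceeding $s$ is $M$, so for $v>s$ every entry $\geq v$ equals $M$ (or there are none), whereas for $v\leq s$ both $M$ and $s$ are distinct entry-values that are $\geq v$. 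Thus the admissible last entries are exactly $v\in\{s+1,s+2,\ldots,n\}$, and since $n-s=(n-M)+(M-s)=p+q$, there are exactly $p+q$ of them, each giving a distinct $f$ whose length-$n$ prefix is $e$. This establishes the count.

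Next I would compute the parameters $(p',q')$ of $f$ for each admissible $v$, splitting into two ranges. For $v>M$ (the $p$ values $M+1,\ldots,n$) we have $m_1(f)=v$ and $m_2(f)=M$, so $p'=(n+1)-v$ and $q'=v-M$; setting $v=M+j$ for $j=1,\ldots,p$ yields exactly the list $(p,1),(p-1,2),\ldots,(1,p)$. For $s<v\leq M$ (the remaining $q$ values) we always have $m_1(f)=M$, hence $p'=p+1$; here $m_2(f)=s$ when $v=M$ (giving $q'=q$), while $m_2(f)=v$ when $s<v<M$ (giving $q'=M-v$, which runs over $1,\ldots,q-1$). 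Collecting these, the realized second coordinates are precisely $1,2,\ldots,q$, so this range contributes $(p+1,q),(p+1,q-1),\ldots,(p+1,1)$. Matching the two ranges, as unordered collections, against the statement completes the argument.

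I expect the only delicate points to be edge cases rather than any genuine difficulty. One must treat $s=-1$ (when $e$ has a single distinct value, so that $s$ is not an actual entry) to confirm the characterization ``$v>s$'' still applies, and one must check $q=1$ together with the boundary value $v=M$ in the second range, where the expression for $m_2(f)$ switches between $s$ and $v$. Verifying that the $q'$-values in the second range sweep out $\{1,\ldots,q\}$ exactly once---so that no parameter is missed or repeated---is the one place where the bookkeeping deserves care.
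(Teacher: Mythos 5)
Your proposal is correct and follows essentially the same route as the paper: you characterize the admissible appended values as exactly those $b$ with $m_2(e)<b\leq n$, then split into the cases $b>m_1(e)$, $b=m_1(e)$, and $m_2(e)<b<m_1(e)$ to read off the parameters, which is precisely the paper's case analysis. The only difference is that you supply a justification (the reduction to the local condition on the last entry) for the admissibility criterion that the paper simply declares to be clear.
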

\begin{proof}
It is clear that  the sequence $f:=(e_1,e_2,\ldots,e_n,b)$ is in $\I_{n+1}(\neq,\geq,\geq)$ if and only if $m_2(e)<b\leq n$. We distinguish the following  three cases: 
\begin{itemize}
\item If $m_2(e)<b< m_1(e)$, then $m_1(f)=m_1(e)$ and $m_2(f)=b$. These contribute the parameters $(p+1,q-1), (p+1,q-2),\ldots, (p+1,1)$. 
\item If $b=m_1(e)$, then $m_1(f)=m_1(e)$ and $m_2(f)=m_2(e)$. So this case contributes the parameters $(p+1,q)$.
\item If $m_1(e)<b\leq n$, then $m_1(f)=b$ and $m_2(f)=m_1(e)$. These contribute the parameters $(p,1), (p-1,2),\ldots, (1,p)$.
\end{itemize}
The above three cases together give the rewriting rule for $(\neq,\geq,\geq)$-avoiding inversion sequences. 
\end{proof}

Using the above lemma, we can construct a {\em generating tree} (actually an infinite rooted tree) for $(\neq,\geq,\geq)$-avoiding inversion sequences by representing each element as its parameters as follows: the root is $(1,1)$ and the children of a vertex labelled $(p,q)$ are those generated according to the  rewriting rule in Lemma~\ref{lem:sav}. 
For an inversion sequence $e\in\I_n$, let  $|e|=n$ denote the size of $e$.  Introduce 
$E_{p,q}(x)=\sum_{e}x^{|e|}$, where the sum runs over  all $(\geq,\geq,-)$-inversion sequences with parameters $(p,q)$. Define the formal power series 
$$
E(u,v)=E(x;u,v):=\sum_{p,q\geq1}E_{p,q}(x)u^pv^q=uvx+(uv+u^2v)x^2+\cdots.
$$ We can turn this generating tree into a functional equation as follows. 
\begin{proposition}We have the following functional equation for $E(u,v)$:
\begin{equation}\label{eq:sav}
\biggl(1+\frac{uvx}{1-v}\biggr)E(u,v)=uvx+\biggl(\frac{uvx}{1-v}+\frac{uvx}{u-v}\biggr)E(u,1)-\frac{uvx}{u-v}E(v,1).
\end{equation}
\end{proposition}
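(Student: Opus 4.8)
The plan is to read~\eqref{eq:sav} off directly from the generating tree recorded in Lemma~\ref{lem:sav}. Every $(\neq,\geq,\geq)$-avoiding inversion sequence of size $n\geq2$ arises from a \emph{unique} inversion sequence of size $n-1$, namely its parent obtained by deleting the last entry, and the parameters $(p',q')$ of a child are determined from the parameters $(p,q)$ of its parent by the rewriting rule of Lemma~\ref{lem:sav}. Since the root is the length-one sequence with parameters $(1,1)$, contributing the monomial $uvx$, summing $x^{|e|}u^{p(e)}v^{q(e)}$ over all $e$ yields
\[
E(u,v)=uvx+x\sum_{p,q\geq1}E_{p,q}(x)\,P_{p,q}(u,v),
\]
where $P_{p,q}(u,v):=\sum u^{p'}v^{q'}$ runs over the labels $(p',q')$ of the children of a node labelled $(p,q)$, and the extra factor $x$ records the increase of the size by one in passing from parent to child.

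First I would compute $P_{p,q}(u,v)$ from the two families of children in Lemma~\ref{lem:sav}. The first family $(p+1,j)$ with $j=1,\dots,q$ contributes the finite geometric sum
\[
\sum_{j=1}^{q}u^{p+1}v^{j}=\frac{u^{p+1}v-u^{p+1}v^{q+1}}{1-v},
\]
while the second family $(p+1-i,i)$ with $i=1,\dots,p$, after summing the ratio $v/u$, contributes
\[
\sum_{i=1}^{p}u^{p+1-i}v^{i}=\frac{u^{p+1}v-uv^{p+1}}{u-v}.
\]
These two sums are exactly the sources of the denominators $1-v$ and $u-v$ appearing in~\eqref{eq:sav}.

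Finally I would substitute $P_{p,q}$ into the displayed recurrence and sum against $E_{p,q}(x)$ termwise. Using $\sum_{p,q}E_{p,q}u^{p}v^{q}=E(u,v)$, the $u^{p+1}v^{q+1}$ term becomes $uvE(u,v)$; the two $u^{p+1}v$ terms, whose second index is summed with weight $1$, reduce to $\tfrac{uv}{1-v}E(u,1)$ and $\tfrac{uv}{u-v}E(u,1)$; and the $uv^{p+1}$ term from the second family reduces to $\tfrac{uv}{u-v}E(v,1)$. Moving the resulting $\tfrac{uvx}{1-v}E(u,v)$ to the left-hand side then gives precisely~\eqref{eq:sav}. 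The hard part will be the bookkeeping for the second family of children: there the first label coordinate \emph{decreases} while the second \emph{increases}, so the roles of $u$ and $v$ are interchanged, and specializing the second index to $1$ turns the $uv^{p+1}$ contribution into $E(v,1)$ rather than $E(u,1)$; obtaining this single term with the correct sign is the step most prone to error.
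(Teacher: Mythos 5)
Your proposal is correct and follows essentially the same route as the paper's own proof: both exploit the fact that every non-root vertex of the generating tree has a unique parent, compute the two geometric sums over the children labels $(p+1,j)$, $j=1,\dots,q$, and $(p+1-i,i)$, $i=1,\dots,p$ (yielding the $1-v$ and $u-v$ denominators), and then sum against $E_{p,q}(x)$ to obtain \eqref{eq:sav} after moving the $\frac{uvx}{1-v}E(u,v)$ term to the left. Your intermediate expressions match the paper's line by line, including the key $uv^{p+1}$ contribution that specializes to $E(v,1)$.
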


\begin{proof}
Since in the generating tree for  $(\neq,\geq,\geq)$-avoiding inversion sequences, each vertex other than the root $(1,1)$ can be generated by a unique parent, we have 
\begin{align*}
E(u,v)&=uvx+x\sum_{p,q\geq1}E_{p,q}(x)\biggl(u^{p+1}\sum_{i=1}^{q}v^i+\sum_{i=1}^{p}u^{p+1-i}v^{i}\biggr)\\
&=uvx+x\sum_{p,q\geq1}E_{p,q}(x)\biggl(\frac{uv(u^p-u^pv^q)}{1-v}+\frac{uv(u^p-v^p)}{u-v}\biggr)\\
&=uvx+\frac{uvx}{1-v}(E(u,1)-E(u,v))+\frac{uvx}{u-v}(E(u,1)-E(v,1)),
\end{align*}
which is equivalent to~\eqref{eq:sav}.
\end{proof}

Can the two functional equations~\eqref{eq:sav} and~\eqref{func:011,201} be solved to prove Conjecture~\ref{conj:wilf}?

\section{Concluding  remarks}
Since the publications of Corteel, Martinez, Savage and Weselcouch~\cite{cor} and Mansour and Shattuck~\cite{ms} on enumeration of inversion sequences avoiding a single pattern of length $3$, patterns in inversion sequences have attracted considerable attentions. Many interesting enumeration results and surprising connections with special integer sequences and functions have already been found~\cite{auli,auli2,bbgr,bgrr,cjl,gue,kl,kl2,lin,lin2,ms,lyan,yan}, some of which are still open. As one of the most attractive conjectures,   Beaton, Bouvel,  Guerrini and Rinaldi~\cite[Conj.~23]{bbgr} suspected that
$$
|\{e\in\I_n(110): \zero(e)=k\}|=|\{\pi\in\S_n(\underline{23}14):\rmin(\pi)=k\}|
$$
 for $1\leq k\leq n$, where $\rmin(\pi)$ denotes the number of right-to-left minima of $\pi$.

In this paper, we finish the  classification of  all the Wilf-equivalences for inversion sequences avoiding pairs of length-$3$ patterns and establish their further connections to some special OEIS sequences and classical combinatorial objects. Can the generating functions, recurrence relations or any general expressions be obtained for the avoidance classes for the pattern pairs that have marked with ``open'' (new in the OEIS~\cite{oeis}) in column 3 in Table~\ref{invseq:pairs2}?

For classical patterns in inversion sequences, one further direction to continue would be to consider a triple (or  multi-tuple) of  length-$3$ patterns or a single pattern of length $4$. In particular, regarding the  integer sequence A279561 in OEIS~\cite{oeis}, the following conjectured enumeration has been discovered by Jun Ma and the second named author.

\begin{conjecture}[Lin and Ma 2019]\label{conj:wilf2} For $n\geq1$, we have 
$$
|\I_n(0012)|=1+\sum_{i=1}^{n-1}{2i\choose i-1}.
$$
In other words, there holds the unbalanced Wilf-equivalence
$$
(0012)\thicksim(021, 120).
$$
\end{conjecture}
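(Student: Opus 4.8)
The plan is to determine the generating function $\sum_{n\ge1}|\I_n(0012)|x^n$ in closed form and show it equals the right-hand side of~\eqref{gen:sava}; since that series was identified in Section~\ref{sec:4082} with the $(021,120)$, $(110,102)$ and $(102,120)$ classes, this would simultaneously prove the enumeration and the unbalanced Wilf-equivalence $(0012)\thicksim(021,120)$. First I would record the structural description of the class: calling a value $e_k$ a \emph{$001$-top} if it is the third entry of some $001$-pattern $e_i=e_j<e_k$, a sequence $e$ avoids $0012$ precisely when no entry exceeds any $001$-top occurring strictly to its left. In particular the subclass avoiding $001$ (the unimodal sequences $e_1<\cdots<e_t\ge e_{t+1}\ge\cdots\ge e_n$, already counted by $2^{n-1}$ in Section~\ref{sec:power}) sits inside $\I_n(0012)$, and the remaining members are governed by a non-increasing family of thresholds given by the successive $001$-tops.

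Mirroring the treatment of classes 4082B and 4082C, I would introduce the refined array $a_{n,m,\ell}=$ the number of $e\in\I_n(0012)$ whose largest entry is $m$ with leftmost occurrence in position $\ell$, together with $a(x;u,v)=\sum a_{n,m,\ell}x^nu^mv^\ell$. The decomposition would split on whether $m$ occurs once or several times: when $m$ is repeated, its leftmost copy should be \emph{extraneous} for $0012$-avoidance (exactly as the leftmost maximum was extraneous in the proof of~\eqref{rec:120,102}), so that deleting it yields a length-$(n-1)$ member with a smaller leftmost-max position; when $m$ occurs once, the prefix before it, together with the threshold it imposes on the suffix through the $001$-top description above, must be analysed, and here the enumerator of Dyck paths by the height of the last step $D(u,x)$ from~\eqref{alg:D} should again enter. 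The goal of this step is a functional equation of the same shape as~\eqref{fun:110,102}--\eqref{fun:120,102}, after which the kernel method (choosing $u=u(x,v)$ to cancel the coefficient of $a(x;u,v)$, substituting the two roots $w_1,w_2$ of the resulting quadratic, and eliminating) should deliver $a(x;1,1)$ and finally $1+\sum_{n\ge1}|\I_n(0012)|x^n=a(x;1,1)+\tfrac{1}{1-x}$, which I would simplify to~\eqref{exp:110,102}.

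The hard part will be closing the recurrence into a \emph{solvable} functional equation. Unlike $(110,102)$ or $(102,120)$, the $0012$-constraint is genuinely non-local: whether a given entry is admissible depends on the minimal $001$-top lying to its left, which in turn depends on the least repeated value and on positions, so the two catalytic variables $(m,\ell)$ recording only the maximum and its position may not make the case analysis Markovian. I therefore expect to need one further catalytic parameter --- most naturally the current threshold (the minimal $001$-top, equivalently the least repeated value) --- leading to a functional equation in three catalytic variables and a correspondingly heavier elimination, possibly with two successive applications of the kernel method. Verifying that this enlarged equation is exactly solvable, and that its solution collapses to~\eqref{exp:110,102}, is the crux.

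As an alternative I would attempt a direct bijection from $\I_n(0012)$ onto one of the solved classes, say $\B_n$ (the colored Dyck paths of Section~\ref{sec:4.1}) or $\I_n(110,102)$. Since $\I_n(0012)$ contains sequences such as $(0,1,2,1)$ that already contain $021$, no set-containment is available, so such a bijection must recode the $001$-top/threshold structure into the colour-of-lowest-positive-step data describing $\B_n$; establishing that this recoding is a bijection, most cleanly by matching a statistic equidistributed on both sides, would give a more transparent proof than the kernel computation, at the cost of discovering the correct encoding.
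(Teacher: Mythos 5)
The statement you are addressing is not proved anywhere in the paper: it appears there as an open conjecture attributed to Lin and Ma, so there is no proof of record to compare against, and a correct argument would constitute new research. Judged on its own terms, your proposal is a research plan whose decisive steps are missing, as you yourself concede. The route mirroring Sections~\ref{sec:4.2}--\ref{sec:4.3} breaks down exactly where you locate the crux: for the pairs $(110,102)$ and $(102,120)$, the recurrences~\eqref{rec:110,102} and~\eqref{rec:120,102} close in the two catalytic parameters $(m,\ell)$ only because those patterns force the entries surrounding the leftmost maximum into weakly increasing or constant blocks, which is what allows the Dyck-path enumerator $D(u,x)$ to absorb the prefix. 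For $0012$ there is no such rigidity: the prefix before a unique maximum is an essentially arbitrary $0012$-avoider subject to the $001$-top threshold you describe, so deleting the maximum does not land back in the same refined family, and the state $(m,\ell)$ is, as you admit, not Markovian. Your remedy --- a third catalytic variable recording the current threshold --- is never executed: no functional equation is derived, and there is no evidence that the kernel method (which in the paper eliminates one catalytic variable at a time through a quadratic kernel) extends to the resulting three-variable equation. The alternative bijective route is likewise only announced: your observation that $(0,1,2,1)\in\I_4(0012)$ contains $021$, ruling out any containment-based argument, is correct, but the proposed recoding of the threshold structure into the colored Dyck paths $\B_n$ is not constructed, and no candidate equidistributed statistic is offered.

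Two smaller points. First, your structural description of $0012$-avoidance (no entry may exceed a $001$-top to its left) and the extraneousness claim are both sound --- the maximum can only play the role of the final letter of a $0012$ occurrence, so removing a non-final copy is harmless --- but this is the easy half of any recurrence; the single-occurrence case is where the argument currently has no content. Second, in the repeated-maximum case you say deletion of the leftmost copy yields a member ``with a smaller leftmost-max position''; as in case (a) of the proof of~\eqref{rec:120,102}, the surviving occurrence of the maximum sits at position at least $\ell$ in the shortened sequence, so the relevant sum runs over larger positions, not smaller ones. In short, the proposal assembles plausible tools and honestly flags where they fail, but it does not prove the conjecture; the statement remains open after your write-up exactly as it is in the paper.
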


\acknowledgements
The authors  thank the anonymous referees for reading carefully the manuscript and providing valuable suggestions. 
The  On-Line Encyclopedia of Integer Sequences~\cite{oeis} created by Neil Sloane is very helpful in this research. 

%

\end{document}